\newtheorem{thm}{Theorem}[section]
\newtheorem{lemma}[thm]{Lemma}
\newtheorem{cor}[thm]{Corollary}
\theoremstyle{definition}
\newtheorem{defn}[thm]{Definition}
\newtheorem{rem}[thm]{Remark}
\newtheorem{fact}[thm]{Fact}
\newcommand{\acts}{\curvearrowright}
\newcommand{\res}{\restriction}
\newcommand{\bd}{\boldsymbol{\Delta}}
\newcommand{\bs}{\boldsymbol{\Sigma}}
\newcommand{\bp}{\boldsymbol{\Pi}}
\newcommand{\bbP}{\mathbb{P}}
\newcommand{\Z}{\mathbb{Z}}
\newcommand{\N}{\mathbb{N}}
\newcommand{\dom}{\text{\rm dom}}
\newcommand{\ocl}[1]{\overline{#1}}
\newcommand{\sR}{\mathcal{R}}
\newcommand{\xd}{{\dot{x}}}
\newcommand{\forces}{\Vdash}
\newcommand{\ac}{\mathsf{AC}}
\newcommand{\zf}{\mathsf{ZF}}
\newcommand{\sat}{\text{sat}}
\begin{document}

\title{Forcing Constructions and Countable Borel Equivalence Relations}

\author{Su Gao}
\address{Department of Mathematics\\ University of North Texas\\ 1155 Union Circle \#311430\\  Denton, TX 76203\\ USA}
\email{sgao@unt.edu}
\author{Steve Jackson}
\address{Department of Mathematics\\ University of North Texas\\ 1155 Union Circle \#311430\\  Denton, TX 76203\\ USA}
\email{jackson@unt.edu}
\author{Edward Krohne}
\address{Department of Mathematics\\ University of North Texas\\ 1155 Union Circle \#311430\\  Denton, TX 76203\\ USA}
\email{EdwardKrhone@my.unt.edu}
\author{Brandon Seward}
\address{Department of Mathematics\\ University of Michigan\\ 2074 East Hall\\ 530 Church Street\\  Ann Arbor, MI 48109 \\ USA}
\email{b.m.seward@gmail.com}
\date{\today}
\subjclass[2010]{Primary 03E15, 03E40; Secondary 54H05, 54H20}
\keywords{}
\thanks{}

\begin{abstract} We prove a number of results about countable Borel equivalence relations with forcing constructions and arguments. These results reveal hidden regularity properties of Borel complete sections on certain orbits. As consequences they imply the nonexistence of Borel complete sections with certain features.
\end{abstract}

\maketitle \thispagestyle{empty}


\section{Introduction}

This paper is a contribution to the study of countable Borel
equivalence relations. We consider Borel actions of countable
groups on Polish spaces and study the orbit equivalence relations
which they generate. Properties such as hyperfiniteness,
treeability, chromatic numbers, matchings, etc. have received
much interest both in ergodic theory and descriptive set theory.
Typically, investigations into these properties begin with the
construction of Borel complete sections possessing special properties.
In this paper we introduce new methods based on forcing techniques
for studying Borel complete sections. We use forcing constructions
to prove the existence of certain regularity phenomena in complete sections.
This of course prevents the existence of complete sections
with certain features. We remark that our work here is entirely in the Borel
setting, as our results generally fail if null sets are ignored.

Recall that a set $S$ is a \emph{complete section} for an
equivalence relation $E$ if $S$ meets every $E$-class. A classic result
on complete sections is the Slaman--Steel lemma which states that
every aperiodic countable Borel equivalence relation $E$ admits a
decreasing sequence of Borel complete sections $S_n$ with empty
intersection (this result is stated explicitly as Lemma 6.7 of \cite{KechrisMiller},
where they attribute it to Slaman-Steel; the proof is implicit in 
Lemma~1 of \cite{SlamanSteel}). This result played an important role in their proof
that every equivalence relation generated by a Borel action of $\Z$ is
hyperfinite. A long standing open problem asks if every equivalence
relation generated by a Borel action of a countable amenable group
must be hyperfinite, and progress on this problem is in some ways connected to
strengthening the Slaman--Steel lemma. 
In particular, constructing sequences of complete sections (``marker sets'') with 
certain geometric properties is central to the proofs of 
 \cite{GaoJackson,SchneiderSeward} that every equivalence relation
 generated by the Borel action of an abelian, or even locally nilpotent,
group is hyperfinite. In particular, the 
constructions in \cite{GaoJackson,SchneiderSeward} build complete
sections $B_n$ (facial boundaries) which are sequentially orthogonal, or repel
one another, so that the sequence $S_n = \bigcup_{i > n} B_i$
is decreasing and vanishes. Thus, the question of 
what kinds of marker sets various equivalence relations can admit is 
an important one.

Our first theorem unveils a curious property which
limits how quickly a sequence of complete sections can
vanish. In fact, this theorem says that if a sequence of
complete sections vanishes, then it must do so
arbitrarily slowly.

\begin{thm} \label{main1}
Let $\Gamma$ be a countable group, $X$ a compact Polish space, and
$\Gamma \acts X$ a continuous action giving rise to the orbit
equivalence relation $E$.  Let $(S_n)_{n \in \N}$ be a sequence of Borel
complete sections of $E$. If $(A_n)_{n \in \N}$ is any sequence of
finite subsets of $\Gamma$ such that every finite subset of $\Gamma$
is contained in some $A_n$, then there is an $x \in X$ such that for
infinitely many $n$ we have $A_n\cdot x \cap S_n \neq \varnothing$.
\end{thm}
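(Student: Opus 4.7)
The plan is to derive a contradiction from the failure of the conclusion by combining the Baire category theorem with two preparatory reductions. First I would pass to a minimal subaction: by Zorn's lemma applied to the family of nonempty closed $\Gamma$-invariant subsets of the compact space $X$ (chains have nonempty intersection by compactness), there is a closed $\Gamma$-invariant $Y \subseteq X$ on which $\Gamma \acts Y$ is topologically minimal. Each $S_n \cap Y$ is still a Borel complete section for $\Gamma \acts Y$, and any witness $y \in Y$ for the conclusion transfers to $X$, so one may assume the action on $X$ itself is minimal. The payoff is that every nonempty open $V \subseteq X$ satisfies $\Gamma \cdot V = X$, and by compactness of $X$ some finite $F \subseteq \Gamma$ gives $F \cdot V = X$.

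Next I would observe that the hypothesis on $(A_n)$ automatically upgrades to the following: for every finite $F \subseteq \Gamma$ and every $N$, infinitely many $n$ satisfy $F \subseteq A_n$. This follows by a pigeonhole argument: if only finitely many indices worked, enumerate $\Gamma = \{g_k\}_{k \in \N}$ and for each $k$ choose $n_k$ with $F \cup \{g_k\} \subseteq A_{n_k}$; then all the $n_k$ lie in a fixed finite set, so some single $A_{n_0}$ contains infinitely many distinct $g_k$, contradicting the finiteness of $A_{n_0}$.

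Now assume that the conclusion of the theorem fails and set $T_n = \bigcup_{a \in A_n} a^{-1} S_n$. The assumption becomes $X = \bigcup_N B_N$ where $B_N = \bigcap_{n \geq N}(X \setminus T_n)$ is Borel. Since $X$ is a Baire space, some $B_N$ is comeager in a nonempty open set $V$, so for each $n \geq N$ and each $a \in A_n$, the translate $a^{-1} S_n$ is meager in $V$, equivalently $S_n$ is meager in $a \cdot V$. Using the minimality reduction, fix a finite $F \subseteq \Gamma$ with $F \cdot V = X$; using the pigeonhole upgrade, choose $n \geq N$ with $F \subseteq A_n$. Then $S_n$ is meager in $\bigcup_{a \in F} a \cdot V = X$. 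But $S_n$ is a complete section, so $X = \bigcup_{\gamma \in \Gamma} \gamma \cdot S_n$ would be a countable union of meager sets, contradicting the Baire category theorem for the compact Polish space $X$.

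The main obstacle is the pigeonhole upgrade of the hypothesis. The statement literally promises only that ``some $n$'' works for each finite $F$, and it is easy to overlook that this automatically forces infinitely many such $n$. Without that upgrade one cannot align the finite cover $F$ produced by minimality and compactness with an index $n$ past the threshold $N$ supplied by Baire category, and the contradiction does not close.
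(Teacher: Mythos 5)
Your proof is correct and takes a genuinely different route from the paper. The paper proves this by building a witness inductively, either via orbit forcing (working in a forcing extension of a countable elementary submodel and then pulling back by absoluteness) or, in its non-forcing variant, by playing the strong Choquet game on a $G_\delta$ subspace where each $S_n$ becomes relatively open. In both versions the argument is a step-by-step construction of a sequence of conditions or open sets, where minimality is invoked at each stage to jump ahead in the $A_n$ sequence. Your argument instead derives a one-shot contradiction: from the assumed failure you extract a Borel set $B_N$ comeager in some nonempty open $V$, use minimality and compactness once to finitely cover $X$ by translates of $V$, align the cover with a single index $n$ via the pigeonhole observation, and conclude that a complete section $S_n$ is meager, contradicting Baire. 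This is shorter and more elementary -- no metamathematical apparatus, no game, no inductive construction. What the paper's machinery buys is flexibility: the forcing framework is reused in later sections for other forcing notions, and the forcing version of the argument extends (via absoluteness rather than the Baire property) to the ``absolutely $\bd^1_2$'' setting discussed in the paper's Remark~\ref{rem:dtm}, which your Baire-property argument would not reach outright in ZFC. I'll also note that your pigeonhole upgrade of the hypothesis from ``some $A_n$'' to ``infinitely many $A_n$'' addresses explicitly a point the paper handles only with the phrase ``without loss of generality we may assume $i_{n+1}>i_n$''; your observation is needed in both proofs, and (as with the paper's statement) it implicitly assumes $\Gamma$ is infinite, which is the only case of interest.
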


We remark that the above result is easily seen to be inherited from
subspaces, so one can instead simply require that $X$ contain a
compact invariant subset. In particular, by results in \cite{GJS1, GJS2} the
above result holds when $X = F(2^\Gamma)$, where $F(2^\Gamma)$ is the set of points
in $2^\Gamma$ having trivial stabilizer.

This theorem was motivated by a similar result for the case when each
$S_n$ is clopen, the proof of which is a straightforward topological
argument without forcing. We will define a forcing notion, called
\emph{orbit forcing}, that will allow us to give a proof of
Theorem~\ref{main1} that is essentially a generalization of the
topological proof. It will turn out that forcing can be removed and a
pure topological proof is possible (in fact we will give such a
proof), but the forcing proof is shorter and more intuitive.

We remark that, after learning of the above theorem, C. Conley and A. Marks
obtained another interesting result on the behavior of distances
to sequences of complete sections \cite{CMa}.

The orbit forcing can be used to obtain more results, the following
being an example.

\begin{thm}\label{main2}
If $B \subseteq F(2^\Gamma)$ is a Borel complete section then $B$
meets some orbit recurrently, i.e., there is $x\in F(2^\Gamma)$ and
finite $T\subseteq \Gamma$ such that for any $y\in[x]$, $T\cdot y\cap
B\neq\varnothing$.
\end{thm}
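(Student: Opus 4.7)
The plan is to deduce Theorem~\ref{main2} from a double use of Baire category inside a minimal compact $\Gamma$-invariant subspace of $F(2^\Gamma)$; this parallels an orbit-forcing argument, with a Baire-generic point playing the role of the generic filter.

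First I would invoke the results of \cite{GJS1, GJS2} to obtain a nonempty compact $\Gamma$-invariant subset of $F(2^\Gamma)$, and then apply Zorn's lemma inside it to extract a minimal nonempty closed invariant subspace $K$. Since $K$ is $\Gamma$-invariant and $B$ is a complete section for the action on $F(2^\Gamma)$, $B' := B \cap K$ is still a Borel complete section for the minimal flow $\Gamma \acts K$.

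Next, since $K = \bigcup_{\gamma \in \Gamma} \gamma B'$ and the compact Polish space $K$ is Baire, some $\gamma B'$ is comeager in a nonempty relatively open $U \subseteq K$; replacing $B$ by $\gamma B$ (which preserves being a Borel complete section and only conjugates the witness $T$ in the conclusion), we may assume $B'$ itself is comeager in $U$. By minimality of $K$, the family $\{\delta^{-1} U : \delta \in \Gamma\}$ is an open cover of $K$, and compactness yields a finite $T \subseteq \Gamma$ such that every $y \in K$ satisfies $ty \in U$ for some $t \in T$.

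The crucial second ingredient is another Baire observation: since $U \setminus B'$ is meager in $K$ and $\Gamma$ acts on $K$ by homeomorphisms, the saturation
\[
M = \bigcup_{\delta \in \Gamma} \delta \cdot (U \setminus B')
\]
is still meager in $K$. Pick any $x$ in its comeager complement. Then for every $\eta \in \Gamma$, whenever $\eta x \in U$ we automatically have $\eta x \in B'$. Combining the two steps: for arbitrary $y = \delta x \in [x]$, choose $t \in T$ with $ty \in U$; then $ty \in B' \subseteq B$, so $T \cdot y \cap B \neq \varnothing$, exactly as required.

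The main obstacle is the synthesis described just above: the syndetic return time $T$ supplied by minimality only forces entry into $U$, and one must \emph{separately} arrange that this entry lands inside $B$ by choosing $x$ outside the $\Gamma$-saturated meager failure set $M$. Producing a nonempty compact invariant subspace of $F(2^\Gamma)$ at the very start relies on \cite{GJS1, GJS2}, but once that is in hand the rest is a clean, purely topological Baire-category argument whose structure mirrors what the orbit-forcing approach achieves.
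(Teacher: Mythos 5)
Your proof is correct, and it takes a genuinely different (and more self-contained) route than the paper's. The paper derives Theorem~\ref{main2} as Corollary~\ref{meetorbitrecurrent} through a chain: orbit forcing over a minimal $2$-coloring establishes that every Borel equivariant map sends some point to a recurrent point (Theorem~\ref{recurrentfactor}); this is then applied with the identity map into $(F(2^\Gamma),\tau)$ for a finer Polish topology $\tau$ in which $B$ is open (Corollary~\ref{taurecurrent}), and the result drops out. You instead stay entirely in the original topology and argue directly with the Baire property of the Borel set $B$: having passed to a minimal compact invariant $K$, you localize $B$ to be comeager in some open $U$ via the Baire category theorem, use minimality plus compactness to produce a syndetic return-time set $T$ for $U$, and then, crucially, discard the $\Gamma$-saturation of the meager set $U\setminus B$ to pick $x$ whose entire orbit enters $B$ whenever it enters $U$. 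The saturation step is exactly the right replacement for the forcing/absoluteness bookkeeping: in the paper's language, $x$ is a Cohen-generic (over a countable model) for the space $\ocl{[x]}$, and that is precisely what avoiding a fixed $\Gamma$-invariant meager set accomplishes. What the paper's factorization buys is the reusable intermediate statement about recurrent images under arbitrary Borel equivariant maps (and its corollary about finer topologies), which it invokes again elsewhere; what your route buys is a shorter, purely topological proof of this particular theorem that never mentions topology refinements or factor maps. Both arguments rely on the same nontrivial input from \cite{GJS1,GJS2}, namely the existence of a compact invariant subset of $F(2^\Gamma)$, and neither is an ``ordinary'' category argument on all of $2^\Gamma$ (which the paper notes would fail); the restriction to the minimal compact $K$ is the essential move in both.
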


Again, if $B$ is  assumed to be clopen then the result follows
from the fact that minimal elements form a dense set in
$F(2^\Gamma)$ \cite[Theorem 5.3.6]{GJS2}. We find that the most direct way to obtain this ``Borel
result'' is to mimic the topological proof but use forcing.

The above result can be strengthened in various ways. For example, in
the case of $\Gamma = \Z^d$ we can require that the recurrences occur
at odd distances (distance here refers to the taxi-cab metric
induced by the $\ell_1$ norm $\| (g_1,\dots,g_d)\|=|g_1|+\cdots +|g_d|$).

\begin{thm}\label{main3} Let $d\geq 1$.
If $B \subseteq F(2^{\Z^d})$ is a Borel complete section then $B$
meets some orbit recurrently with odd distances, i.e., there is
$x\in F(2^{\Z^d})$ and finite $T\subseteq \{ g \in \Z^d \colon \| g\| \text{ is odd }\}$
such that for any $y\in[x]$, $T\cdot y\cap B\neq\varnothing$.
\end{thm}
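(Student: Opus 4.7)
The plan is to adapt the orbit forcing argument behind Theorem~\ref{main2} to respect the parity structure of $\Z^d$. Let $E = \{g \in \Z^d : \|g\| \text{ is even}\}$, an index-$2$ subgroup, and fix $s = (1, 0, \dots, 0) \in O := \Z^d \setminus E$, so that $O = sE$. Each $\Z^d$-orbit $[x]$ decomposes into two $E$-orbits $E \cdot x$ and $O \cdot x = s \cdot E \cdot x$.

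\emph{Reformulation.} I first observe that the conclusion is equivalent to finding $x \in F(2^{\Z^d})$ for which both (a) $B \cap E \cdot x$ is $E$-syndetic in $E \cdot x$, and (b) $B \cap O \cdot x$ is $E$-syndetic in $O \cdot x$ (meaning in each case that the intersection can be translated by a finite subset of $E$ to cover the given $E$-orbit). The equivalence exploits that multiplication by $s$ is a bijection between $E$ and $O$: given finite $F_a, F_b \subseteq E$ witnessing (a) and (b), the set $T = F_b^{-1} s \cup F_a^{-1} s^{-1} \subseteq O$ satisfies $T \cdot y \cap B \neq \varnothing$ for every $y \in [x]$ (apply (b) to $sy$ when $y \in E \cdot x$, and (a) to $s^{-1}y$ when $y \in O \cdot x$); the reverse direction is analogous.

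\emph{Forcing argument.} With the reformulation in hand, the task becomes producing a single $x$ witnessing both (a) and (b). I would apply an $E$-action variant of the orbit forcing behind Theorem~\ref{main2} to the free $E$-action on $F(2^{\Z^d})$. Although $B$ need not be an $E$-complete section of $F(2^{\Z^d})$, the Borel set $B \cup sB$ is an $E$-complete section: for any $x$, $B$ meets $[x] = E \cdot x \cup O \cdot x$, and translating a witness by $s$ sends it between the two $E$-halves. Running the $E$-analog of Theorem~\ref{main2}'s orbit-forcing argument on $B \cup sB$ should yield $x$ for which $B \cup sB$ is $E$-syndetic in $E \cdot x$; refining this to $E$-syndeticity of $B$ itself on both $E$-halves of $[x]$ then gives (a) and (b).

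\emph{Main obstacle.} The principal challenge is twofold. First, one must arrange for a single $x$ to witness both (a) and (b) simultaneously, which likely requires showing that the set of $x$ witnessing each is ``large'' in a suitable orbit-forcing sense (analogous to comeagerness) and then intersecting; the symmetry between (a) and (b) under $x \mapsto sx$ should be useful here. Second, the refinement from $E$-syndeticity of $B \cup sB$ on $E \cdot x$ to that of $B$ itself on both halves must be handled carefully, separating the contributions of $B$ and $sB$ in the syndeticity witness and using the $\Z^d$-invariance of $F(2^{\Z^d})$ to transport witnesses between the two halves. Granting these adaptations, the parity-restricted recurrence follows.
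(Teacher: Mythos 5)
Your reformulation in terms of the even subgroup $E$ and the two $E$-halves of each orbit is a clean observation, and the verification that (a) together with (b) yields the conclusion (via $T = F_b s \cup F_a s^{-1}$, say) is correct. The difficulty is exactly the step you flag as your second ``principal challenge,'' and I do not believe it can be closed within the framework you set up.

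The issue is that $E$-syndeticity of $B \cup sB$ in $E\cdot x$ does not decompose into (a) and (b). It only says that for each $z \in E\cdot x$ some bounded $E$-translate of $z$ lies in $B$ \emph{or} in $sB$; it says nothing about both happening. It is perfectly consistent with the output of your $E$-orbit forcing that $B$ never meets $O\cdot x$ at all (equivalently, that $sB$ never meets $E\cdot x$): then all the hits in $E\cdot x$ come from $B$, every $y \in E\cdot x$ is at even distance from every point of $B \cap [x]$, and the conclusion fails on $[x]$. What the theorem is really asserting, among other things, is that $B$ must meet \emph{both} $E$-cosets of some orbit, and the $E$-orbit forcing applied to $B \cup sB$ has no mechanism to ensure this. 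Your ``transport'' suggestion does not rescue it: translating $b \in B \cap E\cdot x$ by $s$ produces a point of $sB$, not of $B$, so it never places a witness for $B$ on the opposite coset. The ``intersect two large sets'' idea also does not work as stated, since the set of $x$ for which $B$ is $E$-syndetic in $E\cdot x$ may be empty, and orbit forcing does not furnish a comeager-type calculus over which to intersect. The obstruction is genuine rather than an artifact of the method: there are minimal $2$-colorings all of whose finite patterns recur only at even displacements, and on such an orbit-closure a parity-blind orbit forcing cannot produce the required $x$.

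By contrast, the paper's proof runs an \emph{odd minimal $2$-coloring forcing} $\bbP_{omt}$ whose conditions are partial patterns on rectangles with odd side lengths, together with finite commitments to $2$-coloring and minimality witnesses. The odd side lengths are exactly what is missing from your plan: from any $\mathfrak{p}$ forcing $g_0 \cdot \dot x_G \in B$ one can pass to a condition $\mathfrak{q}\leq\mathfrak{p}$ whose pattern contains two disjoint copies of $p$ at an odd displacement $h$, and then minimality of $x_G$ produces, for each $y \in [x_G]$, a bounded $\tau$ with both $\tau\cdot y$ and $(h+\tau)\cdot y$ in $U_p$, hence generic over $\mathfrak{p}$, hence $g_0\cdot\tau\cdot y \in B$ and $g_0\cdot(h+\tau)\cdot y \in B$. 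Since $h$ is odd, one of $g_0+\tau$ and $g_0+h+\tau$ is odd. This is precisely the step that forces $B$ onto both $E$-cosets of $[x_G]$; it is the key idea your proposal lacks.
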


It is worth noting that Theorem \ref{main3}, and in fact all of the
forcing results in this paper, can be proved using the orbit forcing method.
However, we believe that forcing arguments in general
may provide a new path for studying countable Borel equivalence
relations. Thus, in order to demonstrate the flexibility of forcing
arguments in this setting, we define and use other forcing notions beyond the
orbit forcing. We choose to prove the above theorem by using a notion of an
odd minimal $2$-coloring forcing.

Another forcing notion we introduce is that of a grid periodicity forcing.
Using this forcing, we obtain the following result which reveals a
surprising amount of regularity in complete sections.

\begin{thm}\label{main4}
Let $d\geq 1$. If $B \subseteq F(2^{\Z^d})$ is a Borel complete
section then there is an $x\in F(2^{\Z^d})$ and a lattice $L\subseteq
\Z^d$ such that $L\cdot x\subseteq B$.
\end{thm}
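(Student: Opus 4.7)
My plan is to prove Theorem~\ref{main4} by defining and analyzing a \emph{grid periodicity forcing} $\mathbb{P}$, whose generic filter simultaneously produces a point $\dot x \in F(2^{\Z^d})$ and a finite-index sublattice $\dot L \subseteq \Z^d$ with $\dot L \cdot \dot x \subseteq B$. Conditions will be tuples $(s, L, \pi, G)$ in which $L$ is a finite-index sublattice of $\Z^d$, $\pi \colon P \to 2$ is a finite ``local pattern'' with $0 \in P$, $G \subseteq L$ is a finite set of lattice ``centers,'' and $s \colon F \to 2$ is a finite partial coloring of $\Z^d$ satisfying $s(g + h) = \pi(h)$ for every $g \in G$ and $h \in P$. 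To each $g \in G$ the condition additionally attaches a finite Borel approximation of $B$ certifying that $g \cdot \dot x$ will belong to $B$. The refinement order extends $s$, enlarges $\pi$ to a bigger pattern, shrinks $L$ in a way that preserves $G \subseteq L$, and enlarges $G$, with each new center receiving its own Borel certification.

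The argument then reduces to establishing three families of dense sets in $\mathbb{P}$. (i)~For every $h \in \Z^d \setminus \{0\}$, conditions forcing $h \cdot \dot x \neq \dot x$ are dense, guaranteeing $\dot x \in F(2^{\Z^d})$; this is the freeness step familiar from the orbit forcing proofs of Theorems~\ref{main1}--\ref{main3}. (ii)~For every $h \in \Z^d$, conditions with $h \in \dom(s)$ are dense, so $\dot x$ is total. (iii)~For every $g \in \Z^d$, conditions that either remove $g$ from the final lattice or adjoin $g$ to $G$ with its Borel certification are dense; combined with~(ii), this forces $\dot L \cdot \dot x \subseteq B$. To prevent $\dot L$ from collapsing to $\{0\}$, I will impose in every condition an a priori upper bound on $[\Z^d : L]$, or else organize the shrinkings of $L$ by a bookkeeping argument. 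Since $\mathbb{P}$ has a countable dense subset, a filter meeting these countably many dense sets can be constructed directly in $V$ by Baire category, yielding the desired pair $(x, L)$.

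The main obstacle will be the extension step behind density family~(iii): starting from $(s, L, \pi, G)$ and a new $g \in L \setminus G$, one must extend the condition so that $g$ joins $G$ with a valid Borel certification of $g \cdot \dot x \in B$. Because $B$ is only Borel and need not contain any basic open set, no finite extension of $s$ can, by itself, pin $g \cdot \dot x$ inside $B$. I plan to handle this by letting the certifications live in a converging system of Borel approximations of $B$: each refinement step refines the approximation at every $g \in G$ simultaneously, so that in the limit each $g \cdot \dot x$ genuinely lies in $B$. The grid structure is essential here, since a single refinement of the common pattern $\pi$ improves the certification at all centers in parallel, while the hypothesis that $B$ is a complete section supplies the elbow room needed to introduce a fresh $g$ together with its first Borel approximation.
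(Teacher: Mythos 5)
Your proposal does define a ``grid periodicity'' forcing, and you correctly identify the central difficulty: since $B$ is merely Borel, no finite condition can directly certify $g\cdot\dot x\in B$. But the mechanism you propose for resolving this -- attaching to each center $g\in G$ a ``converging system of Borel approximations of $B$'' that is refined in parallel at every step -- is not a worked-out construction, and as stated it does not close the gap. You would need to explain exactly what a ``Borel certification'' is (a code for a well-founded tree? a shrinking sequence of closed or $\Sigma^0_\alpha$ sets?), why refining the common pattern $\pi$ simultaneously improves every certification, and most importantly why the intersection of the approximations at each $g$ lands inside $B$ and not merely inside some superset of $B$. None of these steps is supplied, and the last one is genuinely delicate: the conditions of your forcing are open data, while $B$ has arbitrary Borel complexity, so density-of-certification arguments run into the same obstruction you correctly diagnosed at the outset. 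Relatedly, your plan to build the filter ``directly in $V$ by Baire category'' bypasses the countable-elementary-submodel and $\Sigma^1_1$-absoluteness step that the paper uses precisely because $B$ is Borel and not open; that step is not optional.

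The paper's proof avoids this entire problem by not putting $L$, $G$, or any certification of $B$ into the forcing conditions at all. A condition is just a partial coloring on a power-of-$n$ rectangle with one undefined point, and the extension relation is a rigid tiling. Once one knows the generic $x_G$ is aperiodic and $B$ is (still, in $M[G]$) a complete section, there is some $k$ and some $q$ in the generic filter with $q\Vdash k\cdot\dot x_G\in B$. The lattice then comes entirely from the automorphism structure: translation by $(iw(q),jh(q))$ is an automorphism of the forcing, and -- because every longer condition is a tiling by $q$-sized blocks -- any sufficiently large condition below $q$ is automatically also below every such translate of $q$. Hence all the translates $\pi_{i,j}(q)$ lie in the generic filter, and each forces $(k+(iw(q),jh(q)))\cdot\dot x_G\in B$. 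The lattice $L=k+w(q)\Z\times h(q)\Z$ is thus read off from the forcing after the fact, with period dictated by $\dom(q)$. This is the key idea that your proposal is missing, and it is what makes the theorem come cheaply: you never need to certify Borel membership inside a condition, you only need one condition to force one instance of it and then transport that condition around by automorphisms that the grid structure makes compatible. Incidentally this also explains why your worry about $\dot L$ ``collapsing to $\{0\}$'' and the proposed bookkeeping/a priori bound are red herrings: in the correct proof the lattice has a fixed, finite index determined by $q$, so there is nothing to control.
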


If $B \subseteq F(2^{\Z^d})$ is a Borel set but not a complete section,
then there is $x$ with $\Z^d \cdot x \cap B = \varnothing$. Thus we
have the following immediate corollary.

\begin{cor}\label{main5}
Let $d\geq 1$. If $B \subseteq F(2^{\Z^d})$ is Borel then there is an
$x\in F(2^{\Z^d})$ and a lattice $L\subseteq \Z^d$ such that either
$L\cdot x\subseteq B$ or $L\cdot x\cap B=\varnothing$.
\end{cor}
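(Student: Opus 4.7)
The plan is to reduce the corollary to Theorem~\ref{main4} via a trivial dichotomy on whether or not $B$ is a Borel complete section for the orbit equivalence relation $E$ of $\Z^d \acts F(2^{\Z^d})$. The key observation, already signposted in the paragraph preceding the statement, is that failure of $B$ to be a complete section directly witnesses one alternative of the conclusion, while success of $B$ as a complete section is precisely the hypothesis of Theorem~\ref{main4}.

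Concretely, I would split into two cases. Suppose first that $B$ is not a Borel complete section. Then by the very definition of ``complete section,'' some $E$-class is disjoint from $B$, so there exists $x \in F(2^{\Z^d})$ with $\Z^d \cdot x \cap B = \varnothing$. Taking $L = \Z^d$, which is itself a lattice in $\Z^d$ (of full rank and index $1$), yields $L \cdot x \cap B = \varnothing$, giving the second alternative. Otherwise $B$ is a Borel complete section, and Theorem~\ref{main4} applies directly to produce some $x \in F(2^{\Z^d})$ and a lattice $L \subseteq \Z^d$ with $L \cdot x \subseteq B$, giving the first alternative.

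There is no substantive obstacle; the corollary is genuinely immediate once Theorem~\ref{main4} is in hand. The only real content of the argument is the routine unpacking that ``$B$ is not a complete section'' means ``some orbit is entirely disjoint from $B$,'' after which the trivial lattice $L = \Z^d$ disposes of that horn of the dichotomy.
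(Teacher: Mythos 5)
Your proof is exactly the paper's argument: the paper derives Corollary~\ref{main5} from Theorem~\ref{main4} via precisely this dichotomy, noting that if $B$ is not a complete section there is an $x$ with $\Z^d \cdot x \cap B = \varnothing$, so $L = \Z^d$ works. Correct and identical in approach.
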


A. Marks \cite{Marks} has proved a similar result for free groups using
Borel determinacy. Also, after discussing Theorem \ref{main4} with him,
he generalized Theorem \ref{main4} to all countable residually finite groups \cite{Ma}. His proof
also uses forcing, though it uses none of the forcing notions we
introduce here. This again suggests that the flexibility in choosing
a forcing notion may be important for future applications to Borel
equivalence relations.

The above results can be viewed as ruling out certain Borel complete
sections (marker sets) with strong regularity
properties. Alternatively, they can be viewed as 
saying that Borel marker sets must, on some equivalence classes, 
exhibit  regular structure.
In general,  regular marker sets and structures are
desirable in hyperfiniteness proofs or Borel combinatorial results
(e.g., in the study of Borel chromatic numbers). The negative results
stated below unveil a fine line between what is possible and what is
not possible.

In \cite{GaoJackson}, the first two authors proved that all equivalence
relations generated by Borel actions of countable abelian groups are
hyperfinite (this has since been extended to locally nilpotent groups
\cite{SchneiderSeward}). For the finite equivalence relations they construct,
the shapes of the classes at a sufficiently large scale look like rectangles.
However, at finer and finer scales the shapes appear to be increasingly fractal-like.
We use forcing to prove a
claim made in \cite{GaoJackson} stating that this fractal-like behavior
is necessary. This fact indicates that
hyperfiniteness results of this type have a necessary degree of complexity. The
theorem below is stated for rectangles but the proof works for any
reasonable polygon.

\begin{thm} \label{main8}
There does not exist a sequence $\sR_n$ of Borel finite subequivalence
relations on $F(2^{\Z^2})$ satisfying all the following:

\begin{enumerate}
\item[\rm (1)] {\em (regular shape)} For each $n$, each marker region
$R$ of $\sR_n$ is a rectangle.

\item[\rm (2)] {\em (bounded size)} 
For each $n$, there is an upper bound $w(n)$ on the size
of the edge lengths of the marker regions $R$ in $\sR_n$.

\item[\rm (3)] {\em (increasing size)} Letting $v(n)$ denote the
smallest edge length of a marker region $R$ of $\sR_n$, we have $\lim_n
v(n)=+\infty$.

\item[\rm (4)] {\rm (vanishing boundary)} 
For each $x \in F(2^{\Z^2})$ we have that $\lim_n \rho(x, \partial \sR_n)=+\infty$.
\end{enumerate}
\end{thm}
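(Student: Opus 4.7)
We argue by contradiction: suppose such a sequence $\sR_n$ exists, and set $S_n\subseteq F(2^{\Z^2})$ to be the set of $x$ lying on the perimeter of its own $\sR_n$-class. Each $S_n$ is a Borel complete section of the orbit equivalence relation, since every marker rectangle has a nonempty perimeter in its orbit. As an initial observation, condition (2) gives $\rho(x,\partial\sR_n)\leq w(n)/2$ for every $x$, so condition (4) already forces $w(n)\to\infty$ (and by (3), $v(n)\to\infty$ as well).

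The aim is to produce a single $x\in F(2^{\Z^2})$ and a constant $K$ with $\rho(x,\partial\sR_n)\leq K$ for infinitely many $n$, which directly contradicts condition (4). A naive application of Theorem~\ref{main1} to $(S_n)$ with any cofinal family $(A_n)$ of finite subsets of $\Z^2$ yields only $\rho(x,\partial\sR_n)\leq |A_n|$ infinitely often, and since $|A_n|\to\infty$ by cofinality this bound is too weak to violate (4). The proof must therefore exploit the rigidity imposed by conditions (1)--(3) in an essential way.

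The plan is to couple the orbit forcing with the grid rigidity provided by Theorem~\ref{main4}: applying that theorem to each $S_n$ gives a point $x_n$ and a lattice $L_n\subseteq\Z^2$ with $L_n\cdot x_n\subseteq\partial\sR_n$. Condition (1) forces each such lattice of boundary points to be compatible with the axis-aligned edge structure of the tiling; condition (2) bounds the period of $L_n$; and condition (3) ensures every edge has length at least $v(n)$. These combined constraints should let us run a forcing (or direct density/fusion) argument that produces a generic $x$ lying close to $\partial\sR_n$ for infinitely many $n$ with a \emph{uniform} bound $K$: the long axis-aligned edges provide enough freedom to ``slide'' $x$ along the boundary at each stage while preserving compatibility with later stages, and condition (2) bounds the amount of sliding required at each stage.

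The main obstacle is producing the uniform bound $K$ from the a priori $n$-dependent data $(L_n,x_n)$, since the lattices $L_n$ and the tilings of different $\sR_n$ need not be aligned in any obvious way. The axis-aligned rectangular shape (condition (1)) is precisely what provides the coherence needed: fractal-like marker regions, as constructed in \cite{GaoJackson,SchneiderSeward}, have no long straight edges, and the argument correctly fails on them. Translating the geometric coherence supplied by rectangular shape into a Borel/forcing-absolute statement, and thereby extracting a single $x$ and uniform $K$ from the $n$-indexed lattice data, is the technical heart of the proof.
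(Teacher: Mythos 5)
Your proposal is a plan, not a proof: you identify the right target (the sets $S_n=\partial\sR_n$ are Borel complete sections, and one wants a single $x$ with $\rho(x,\partial\sR_n)$ bounded along a subsequence), and you correctly observe that a naive appeal to Theorem~\ref{main1} with cofinal $A_n$ is too weak. But you then explicitly concede that the ``technical heart'' --- extracting a single $x$ and a uniform bound from the $n$-indexed data --- is unsolved. That concession is where the entire content of the theorem lives, so as written this does not establish the result.

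Moreover, the route you sketch, applying Theorem~\ref{main4} to each $S_n$ to get points $x_n$ and lattices $L_n$ with $L_n\cdot x_n\subseteq\partial\sR_n$, faces a serious structural obstacle that is more than a technicality. Theorem~\ref{main4} gives a \emph{different} point $x_n$ (on a different orbit, built from a different generic filter) for each $n$; there is no a priori relation between $x_n$ and $x_m$, nor between $L_n$ and $L_m$, and the grid periodicity forcing depends on the single set $B=S_n$ being fed into it. It is not clear how to run one coherent fusion that respects all $S_n$ simultaneously using that theorem as a black box, and nothing in your outline supplies such a mechanism. The ``sliding along long edges'' intuition is sound, but you have not turned it into a dense set of conditions in any forcing.

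The paper's actual proof does not use grid periodicity or orbit forcing at all. It uses plain Cohen forcing $\bbP_{\Z^2}(2)$ and shows a density statement: for every condition $p$ with domain $[-a,a]^2$ and every $k$ there is $q\leq p$ and $n>k$ with $q\forces\xd_G\in\partial\sR_n$, which already yields a single generic $x$ lying \emph{exactly on} $\partial\sR_n$ for infinitely many $n$ (so distance $0$, not merely bounded by some $K$). The key device, which is precisely what closes the gap you flagged, is a \emph{shifted tiling} of $p$: one chooses $n>k$ with $v(n)>2(2a+1)^2$, a large box $B=[0,b]^2$ with $b>3w(n)$, and defines $r$ on $B$ by tiling with copies of $p$ where the $c$-th vertical column of tiles is cyclically offset downward by $c\bmod(2a+1)$. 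By (1)--(3), some marker rectangle $R$ of $\sR_n(x_G)$ lies entirely inside $B$ with every edge longer than $2(2a+1)^2$; the cyclic column offsets then force any such long horizontal edge to pass through the \emph{center} of some translate $A'$ of $\dom(p)$ on which $r$ agrees with $p$. After strengthening to a condition $s\in G$ that decides both $(i_0,j_0)\in\partial R$ and $\xd_G\res A'=p$, one translates by $(-i_0,-j_0)$ (an automorphism of Cohen forcing, under which the definition of $\sR_n$ is invariant) to produce $q\leq p$ forcing $(0,0)\in\partial\sR_n$. This translation step is what makes the estimate uniform ($K=0$) and is exactly the piece missing from your outline. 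If you want to salvage your plan, the lesson is that the ``long straight edge'' intuition must be encoded as a density property of a single forcing, and the shifted tiling is the concrete way the paper does this.
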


Our last two negative results touch upon the theory of
Borel chromatic numbers. It is not difficult to show that $F(2^{\Z^2})$
has Borel chromatic number strictly greater
than $2$. By using the odd minimal $2$-coloring forcing, we show
that in fact there cannot exist any Borel chromatic coloring
of $F(2^{\Z^2})$ which uses two colors on arbitrarily large regions.

\begin{thm} \label{main9}
There does not exist a Borel chromatic coloring $f \colon F(2^{\Z^2}) \to
\{0,1,\dots,k\}$ such that for all $x \in F(2^{\Z^2})$ there are
arbitrarily large rectangles $R$ in $\Z^2$ such that $f(R\cdot x)$
consists of only two elements of $\{0,1,\dots, k\}$.
\end{thm}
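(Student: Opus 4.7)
The plan is to assume $f$ exists and reduce to Theorem~\ref{main3}. For each pair $\{i,j\}\subseteq\{0,1,\dots,k\}$ of distinct colors, let $X_{i,j}$ be the set of $x\in F(2^{\Z^2})$ admitting arbitrarily large rectangles $R$ with $f(R\cdot x)\subseteq\{i,j\}$. Each $X_{i,j}$ is a Borel, $E$-invariant set (translating a rectangle gives a rectangle), and since there are only finitely many color pairs, the hypothesis together with pigeonhole gives $F(2^{\Z^2})=\bigcup_{i<j}X_{i,j}$. The key local observation is that whenever $f(R\cdot x)\subseteq\{i,j\}$ for a rectangle $R$ with both side lengths at least $2$, since the subgraph induced on $R$ is connected and bipartite and $f$ is a proper coloring, the restriction of $f$ to $R\cdot x$ must be the checkerboard pattern using both colors, so that $f(g\cdot x)$ depends only on the parity of $g_1+g_2$.

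Next, form the Borel set $B=\bigcup_{i<j}\bigl(X_{i,j}\cap f^{-1}(i)\bigr)$, using the \emph{smaller} color of each pair. The checkerboard observation makes $B$ into a complete section: every orbit lies in some $X_{i,j}$ with $i<j$, so it contains a large $\{i,j\}$-colored rectangle, which provides points of color $i$ inside $X_{i,j}\cap f^{-1}(i)\subseteq B$. Applying Theorem~\ref{main3} to $B$ yields $x_0\in F(2^{\Z^2})$ and a finite $T\subseteq\{g\in\Z^2:\|g\|_1\text{ odd}\}$ with $T\cdot y\cap B\neq\varnothing$ for every $y\in[x_0]$. Let $\mathcal{P}=\{\{i,j\}:i<j,\ [x_0]\subseteq X_{i,j}\}$, which is nonempty, and set $M=\max\bigcup\mathcal{P}$; then $M$ occurs as the larger entry $j_0$ of some pair $P_0=\{i_0,M\}\in\mathcal{P}$.

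Finally, to derive the contradiction, choose a $P_0$-colored rectangle $R\cdot x_0$ with $R$ large enough that the ``$T$-interior'' $R^\circ:=\bigcap_{t\in T}(R-t)$ is a nonempty rectangle containing points of both parities. Pick $g\in R^\circ$ whose parity makes $f(g\cdot x_0)=i_0$, and set $y=g\cdot x_0$. For every $t\in T$ we have $g+t\in R$, so $t\cdot y\in R\cdot x_0$; since $\|t\|_1$ is odd, the checkerboard pattern gives $f(t\cdot y)=M$. But Theorem~\ref{main3} requires some $t\in T$ with $t\cdot y\in B$, which would force $f(t\cdot y)=i'$ for some pair $\{i',j'\}\in\mathcal{P}$ with $i'<j'\leq M$, contradicting $f(t\cdot y)=M$. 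I do not foresee a serious obstacle; the only nonroutine maneuver is choosing $P_0$ to have the maximal larger entry, which ensures that the value $M$ that $f$ is forced to take on each $t\cdot y$ cannot serve as the smaller entry of any pair in $\mathcal{P}$.
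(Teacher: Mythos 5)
Your proof is correct, and it takes a genuinely different route from the paper. The paper never derives Theorem~\ref{main9} from Theorem~\ref{main3}; instead it proves the strictly stronger statement in Theorem~\ref{ltc} (which applies to an \emph{arbitrary} Borel $f$, not only chromatic colorings, and produces a single $x$ whose orbit has \emph{no} large chromatically $2$-colored rectangle) directly via the odd minimal $2$-coloring forcing $\bbP_{omt}$: one forces $f(\xd_G)=i$ with some condition $\mathfrak p$, finds $\mathfrak q\leq\mathfrak p$ containing two disjoint copies of $p$ at odd distance, and uses minimality of $x_G$ to locate two generic translates at odd distance, both extending $\mathfrak p$ and hence both colored $i$, inside a supposedly $2$-colored square --- a parity contradiction. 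Your argument instead treats Theorem~\ref{main3} as a black box and reduces Theorem~\ref{main9} to it by a purely combinatorial device: partition $F(2^{\Z^2})$ into the invariant Borel pieces $X_{i,j}$, note the checkerboard rigidity of a connected bipartite $2$-coloring, manufacture a complete section $B$ from the lower-colored checkerboard points, and then exploit the \emph{maximal larger entry} $M$ of the pairs occurring on $[x_0]$ so that the odd recurrence of $B$ collides with the fact that odd shifts inside a $\{i_0,M\}$-checkerboard must hit color $M$ while membership in $B$ forces a color strictly below $M$. What your route buys is a derivation of Theorem~\ref{main9} with no fresh forcing argument, making visible that the odd-recurrence statement already contains the chromatic conclusion; what it loses is the extra generality of Theorem~\ref{ltc}, which your reduction does not recover since it uses that $f$ is a proper coloring (to get the checkerboard structure) in an essential way. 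Both proofs ultimately rest on $\bbP_{omt}$, but yours does so only through Theorem~\ref{main3}. All the technical steps check out: the $X_{i,j}$ are Borel and invariant, pigeonhole gives the cover, the $T$-interior $R^\circ$ is a rectangle that is nonempty with both parities once $R$ is large, and the choice of $P_0$ with maximal larger entry $M$ is exactly what makes the final contradiction airtight.
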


A useful structure for the study of Borel graphs and chromatic numbers is the
notion of \emph{toast} or ``barrier'' as named in \cite{CMi}. For
example, in \cite{CMi} C. Conley and B. Miller used barriers to
prove that for a large class of Borel graphs $G$, the Baire-measurable
and $\mu$-measurable chromatic numbers of $G$ are at most
twice the standard chromatic number of $G$ minus one. In a similar fashion,
the existence of a toast structure on $F(2^{\Z^2})$ would easily imply
the existence of a Borel chromatic $3$-coloring. As a consequence
of Theorem \ref{main1}, we deduce that there is no toast structure
which is layered.

\begin{cor}\label{main7}
There is no Borel layered toast on $F(2^{\Z^d})$, i.e., there is no
sequence of finite subequivalence relations $\{T_n\}$ of $E_{\Z^d}$ on
some subsets $\dom(T_n)\subseteq F(2^{\Z^d})$ such that
\begin{enumerate}
\item[\rm (0)] $\bigcup_n \dom(T_n)= F(2^{\Z^d})$;
\item[\rm (1)] For each $T_n$-equivalence class $C$, and each
  $T_m$-equivalence class $C'$ where $m>n$, if $C \cap C' \neq
  \varnothing$ then $C \subseteq C'$; and
\item[\rm (2)] For each $T_n$-equivalence class $C$ there is a
  $T_{n+1}$-equivalence class $C'$ such that $C \subseteq C'\setminus
  \partial C'$.
\end{enumerate}
\end{cor}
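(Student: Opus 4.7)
Assume for contradiction that $\{T_n\}$ is a Borel layered toast on $F(2^{\Z^d})$. The plan is to produce from the toast a sequence of Borel complete sections $S_n$ and a slowly growing sequence of finite sets $A_n$ to which Theorem~\ref{main1} applies, while showing that the nesting property~(2) forces $A_n\cdot x\cap S_n=\varnothing$ for all but finitely many $n$, regardless of $x$.

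First I would put
\[
S_n \;=\; \partial T_n \;\cup\; \bigl(F(2^{\Z^d})\setminus\dom(T_n)\bigr),
\]
where $\partial T_n$ is the union, over $T_n$-classes $C$, of the Cayley boundary of $C$ with respect to a fixed finite generating set of $\Z^d$ (points of $C$ having a generator-neighbor outside $C$), and check that each $S_n$ is Borel and a complete section. The latter is routine: any orbit $\mathcal{O}$ either misses $\dom(T_n)$, in which case $\mathcal{O}\subseteq S_n$, or it meets $\dom(T_n)$; in the second case, because $T_n$-classes are finite while $\mathcal{O}$ is infinite and Cayley-connected, $\mathcal{O}$ must contain a point on the boundary of some $T_n$-class (bordering either another class or a point of $F(2^{\Z^d})\setminus\dom(T_n)$).

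Next I would exploit condition~(2). Observing that (2) implies $\dom(T_n)\subseteq\dom(T_{n+1})$, condition~(0) gives each $x$ a least index $n_x$ with $x\in\dom(T_{n_x})$. Writing $[x]_m$ for the $T_m$-class of $x$ when $m\geq n_x$, a straightforward induction on $k$, using at each step that a point in $[x]_{n_x+k}$ has all of its Cayley-neighbors in $[x]_{n_x+k+1}$ by~(2), shows $B(x,k)\subseteq[x]_{n_x+k}$ for every $k\geq 0$, where $B(x,k)$ is the Cayley ball of radius $k$. Consequently $B(x,n-n_x-1)$ sits in the Cayley interior of $[x]_n$ and satisfies $B(x,n-n_x-1)\cap S_n=\varnothing$ for every $n>n_x$.

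Finally I would take $A_n=\{g\in\Z^d:\|g\|_1\leq\lfloor\log_2(n+2)\rfloor\}$. These are finite, and because $\log_2(n+2)\to\infty$, every finite subset of $\Z^d$ is contained in some $A_n$. Theorem~\ref{main1} then supplies $x\in F(2^{\Z^d})$ with $A_n\cdot x\cap S_n\neq\varnothing$ for infinitely many $n$; but by the previous step, once $\lfloor\log_2(n+2)\rfloor\leq n-n_x-1$ (which holds cofinitely for this fixed $x$) one has $A_n\cdot x\subseteq B(x,n-n_x-1)$, forcing $A_n\cdot x\cap S_n=\varnothing$ for all sufficiently large $n$, a contradiction. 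The main obstacle I anticipate is executing Step~2 cleanly: I must make sure the boundary operator appearing in~(2) is literally the Cayley boundary used to define $\partial T_n$, so that each additional level of toast indeed gains exactly one unit of ``safe radius'' around $x$ and the induction propagates as claimed.
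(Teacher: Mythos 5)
Your proof is correct and follows essentially the same strategy as the paper's: the nesting property (2) of a layered toast forces $\rho(x,\partial T_n)$ to grow at least linearly in $n$ once $x\in\dom(T_{n_x})$, which is incompatible with Theorem~\ref{main1} applied to any sublinear sequence $A_n$. One small but genuine improvement in your write-up is replacing $\partial T_n$ by $S_n=\partial T_n\cup\bigl(F(2^{\Z^d})\setminus\dom(T_n)\bigr)$ so that each $S_n$ is literally a complete section; the paper applies Corollary~\ref{cor:dtm} to $\partial T_n$ alone, which need not be a complete section for small $n$, and thus tacitly invokes the weakened hypothesis of Remark~\ref{rem:dtm}(2) — your version avoids that wrinkle, and your ball-by-ball induction $B(x,k)\subseteq[x]_{n_x+k}$ is just the unrolled form of the paper's claim that $f_x(n)<f_x(n+1)$ for $n\geq n_x$.
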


We mention that unlayered toast (defined in Section \ref{sec:toast})
does exist and thus $F(2^{\Z^2})$ does have Borel chromatic number $3$.
This result will appear in an upcoming paper.

It is our opinion that this is only the beginning of nontrivial
results about countable Borel equivalence relations that can be proved
using forcing. It is curious to note the tension that the first five
``positive'' results all state the existence of points with certain
regularity properties, whereas the last three ``negative'' results
state the nonexistence of regular structure on orbits. Of course, the
positive results are all obtained by generically building such
elements in the generic extension (and then asserting their existence
in the ground model by absoluteness), and it is known that the results
do not hold for comeager or conull sets of reals. Thus what we are
using is some method that goes beyond the usual measure and category
arguments.

One of the central notions of the theory of countable Borel
equivalence relations is that of Borel reducibility, which is entirely
missing in the narrative of this paper, but is in fact an important
motivation. All known methods to prove nonreducibility results for
countable Borel equivalence relations have been measure-theoretic (it
is well known that category arguments would not work). But
measure-theoretic arguments have their limitations. There have been
persistent attempts by researchers to invent new methods that are not
measure-theoretic. For instance, recent work of S. Thomas
\cite{Thomas} and A. Marks \cite{Marks2} 
explore the use of Martin's
ultrafilter and its generalizations as a largeness notion (see also \cite{Marks} for other
recent uses of determinacy in the study of Borel equivalence relations).
The forcing methods presented in
this paper can also be viewed as an attempt in this direction.

\section{Preliminaries}

In this section we present some preliminaries that will be used
throughout the rest of the paper. More background terminology and
results will be recalled as needed in subsequent sections.

\subsection{Countable Borel equivalence relations and group actions}

In this paper we will be concerned mainly with countable Borel
equivalence relations. Let $X$ be a Polish space and $E$ an
equivalence relation on $X$. $E$ is {\em Borel} if it is a Borel
subset of $X\times X$. $E$ is {\em countable} if each $E$-equivalence
class is countable. For $x\in X$, we let $[x]_E$ denote the
$E$-equivalence class of $x$, i.e.,
$$ [x]_E=\{ y\in X\,:\, xEy\}. $$ When there is no ambiguity we will
omit the subscript and only write $[x]$.

Countable Borel equivalence relations typically arise from orbit
equivalence relations of countable group actions. If $\Gamma$ is a
countable discrete group and $\Gamma\acts X$ is a Borel action of
$\Gamma$ on a Polish space $X$, then the {\em orbit equivalence
  relation} $E^X_\Gamma$ defined by
$$ E^X_\Gamma=\{(x, y)\in X\times X\,:\, \exists g\in \Gamma\, (g\cdot
x=y)\} $$ is obviously a countable Borel equivalence
relation. Conversely, by a well-known theorem of Feldman--Moore, every
countable Borel equivalence relation is of the form $E^X_\Gamma$ for
some Borel action $\Gamma\acts X$ of a countable group $\Gamma$. For
this reason, whenever we speak of a countable Borel equivalence
relation $E$ we assume that there has been fixed a Borel action of a
countable group $\Gamma\acts X$ so that $E=E^X_\Gamma$. For any $x\in
X$, note that $[x]=\Gamma\cdot x$; we also refer to $[x]$ as the {\em
  orbit} of $x$.

A particularly important case for this paper is the action of
$\Gamma=\Z^d$ on $X=2^{\Z^d}$.  In this case we write $E_{\Z^d}$ for
the orbit equivalence relation $E^X_\Gamma$. We will frequently
restrict the action to the free part $F(2^{\Z^d})$ (defined formally
below), and we will also write $E_{\Z^d}$ for the restriction of the
orbit equivalence relation to the free part (which is also a Polish
space). The precise meaning will be clear from the context.

If $\Gamma\acts X$ and $\Gamma\acts Y$ are two actions of $\Gamma$ on
Polish spaces $X$ and $Y$, respectively, a {\em $\Gamma$-map}, or an
{\em equivariant map}, from $X$ to $Y$ is a map $\varphi: X\to Y$ such
that for all $g\in \Gamma$ and $x\in X$,
$$ \varphi(g\cdot x)=g\cdot \varphi(x). $$ If in addition $\varphi$ is
injective, it will be called a {\em $\Gamma$-embedding} or an {\em
  equivariant embedding}.

For a countable group $\Gamma$, the {\em Bernoulli shift} of $\Gamma$
is the action $\Gamma\acts 2^\Gamma$ defined by
$$ (g\cdot x)(h)=x(g^{-1}h) $$ for $x\in 2^\Gamma$ and $g,
h\in\Gamma$. A closely related action $\Gamma\acts
2^{\Gamma\times\omega}$ is defined by
$$ (g\cdot x)(h, n)=x(g^{-1}h, n) $$ for $x\in
2^{\Gamma\times\omega}$, $g, h\in\Gamma$, and $n\in \omega$. A theorem
of Becker--Kechris states that this latter action is a {\em universal}
Borel $\Gamma$-action. That is, for any Borel action $\Gamma\acts X$
of $\Gamma$ on a Polish space $X$, there is a Borel $\Gamma$-embedding
from $X$ into $2^{\Gamma\times \omega}$. In view of this, any
$\Gamma$-action on a Polish space $X$ is Borel isomorphic to the
action of $\Gamma$ restricted to an invariant Borel subset of
$2^{\Gamma\times \omega}$.

\subsection{Aperiodicity, hyperaperiodicity, and minimality}

Let $\Gamma$ be a countable group, $X$ a Polish space, and
$\Gamma\acts X$ a Borel action.  An element $x\in X$ is {\em
  aperiodic} if for any nonidentity $g\in \Gamma$, $g\cdot x\neq
x$. The set of all aperiodic elements of $X$ is called the {\em free
  part} of $X$, and is denoted as $F(X)$. When $F(X)=X$ we say that
the action is {\it free}.

An element $x\in X$ is {\em hyperaperiodic} if the closure of its
orbit is contained in the free part of $X$, i.e.,
$\overline{[x]}\subseteq F(X)$. A hyperaperiodic element of
$2^\Gamma$ is sometimes also called a {\em $2$-coloring} on
$\Gamma$ for the following reason.

\begin{lemma}[\cite{GJS1}]
A point $x \in 2^\Gamma$ is hyperaperiodic if and only if
for all $e_\Gamma \neq s \in \Gamma$, there is a finite
$T\subseteq \Gamma$ such that
$$\forall g \in \Gamma \ \exists t \in T \quad x(g s t) \neq x(g t).$$
\end{lemma}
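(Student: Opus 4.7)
The plan is to prove both directions by contrapositive, using compactness of $2^\Gamma$ to pass between the combinatorial statement (which refers only to $x$) and the topological statement about orbit closures.

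First I would unpack the definitions. With the shift action $(g\cdot x)(h)=x(g^{-1}h)$, a point $y\in 2^\Gamma$ is fixed by $s\in\Gamma$ iff $y(h)=y(s^{-1}h)$ for all $h$, equivalently $y(sh)=y(h)$ for all $h\in\Gamma$. So $x$ fails to be hyperaperiodic iff there exist $y\in\overline{[x]}$ and $s\neq e_\Gamma$ with $y(sh)=y(h)$ for all $h\in\Gamma$. On the combinatorial side, using $(g^{-1}\cdot x)(h)=x(gh)$, the equation $x(gt)=x(gst)$ becomes $(g^{-1}\cdot x)(t)=(g^{-1}\cdot x)(st)$. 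Hence the negation of the displayed condition for a given $s$ reads: for every finite $T\subseteq\Gamma$ there exists $g\in\Gamma$ such that the translate $g^{-1}\cdot x$ satisfies $y(t)=y(st)$ on all $t\in T$.

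For the forward direction, I assume the combinatorial condition fails for some $s\neq e_\Gamma$. I fix an exhaustion $T_1\subseteq T_2\subseteq\cdots$ of $\Gamma$ by finite sets and choose witnesses $g_n$ so that $(g_n^{-1}\cdot x)(t)=(g_n^{-1}\cdot x)(st)$ for every $t\in T_n$. By compactness of $2^\Gamma$, I pass to a subsequence with $g_n^{-1}\cdot x\to y$; since the relation $y(t)=y(st)$ is a clopen condition on each finite set, it survives the limit for every $t\in\Gamma$. Then $y\in\overline{[x]}$ is fixed by $s$, so $x$ is not hyperaperiodic.

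For the backward direction, I assume $x$ is not hyperaperiodic and produce $y\in\overline{[x]}$ and $s\neq e_\Gamma$ with $y(sh)=y(h)$ for all $h$. Given any finite $T\subseteq\Gamma$, I pick $g\in\Gamma$ so that $g^{-1}\cdot x$ agrees with $y$ on the finite set $T\cup sT$; then $x(gt)=(g^{-1}\cdot x)(t)=y(t)=y(st)=(g^{-1}\cdot x)(st)=x(gst)$ for every $t\in T$, which contradicts the displayed condition for this particular $s$. There is no real obstacle in the argument; the only thing to be careful about is tracking the action convention $(g\cdot x)(h)=x(g^{-1}h)$ so that the two rephrasings of the non-aperiodicity of $y$ and of the failed combinatorial condition line up correctly.
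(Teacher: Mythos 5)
The paper does not actually prove this lemma; it is stated with a citation to \cite{GJS1}, so there is no in-paper argument to compare against. That said, your proof is correct, and it is the standard compactness argument one would expect for this equivalence. You track the shift convention $(g\cdot x)(h)=x(g^{-1}h)$ carefully, correctly translate ``$y$ is fixed by $s$'' to ``$y(sh)=y(h)$ for all $h$,'' and correctly rewrite $x(gt)=x(gst)$ as a statement about the translate $g^{-1}\cdot x$. The forward direction uses compactness of $2^\Gamma$ and the fact that agreement on a finite coordinate set is a clopen condition; the backward direction uses that any basic neighborhood of a point $y\in\overline{[x]}$ determined by the finitely many coordinates $T\cup sT$ meets the orbit. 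Both halves are complete, and the contrapositive framing keeps the quantifier bookkeeping clean. No gaps.
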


The above
combinatorial property emphasizes $x$ as a function assigning two
colors $0, 1$ to elements of $\Gamma$ in a way that for any shift $s$,
the pair of elements $g$ and $gs$ might not have different colors, but
it only takes a ``small" perturbation $t$ to take the pair to a new
pair $gt$ and $gst$ with different colors. When the underlying space
is $2^\Gamma$, we will use the two terms hyperaperiodic and 2-coloring
interchangeably.

Unfortunately, in this paper we will also consider graph colorings in
the usual sense that adjacent vertices have different colors. If $k$
many colors are used, we will refer to such colorings as {\em graph
  $k$-colorings} or {\em chromatic $k$-colorings}.

The action $\Gamma\acts X$ is {\it minimal} if every orbit is dense,
i.e., $\overline{[x]}=X$ for every $x\in X$. In general, we call an element
$x\in X$ {\it minimal} if the induced action of $\Gamma$ on
$\overline{[x]}$ is minimal. When $X$ is a compact Polish space and
the action is continuous, an application of Zorn's lemma shows that
there always exist minimal elements. In fact, when $X$ is compact Polish
(or even compact with a wellordered base) we can prove this in $\zf$ 
(i.e., we don't need any form of $\ac$ to prove this).

\begin{fact} [$\zf$]
Let $X$ be a compact $T_2$ topological space with a 
wellordered base $\{ U_\eta\}_{\eta <\lambda}$.
Let $\Gamma$ be a group and $(g,x)\mapsto g \cdot x \in X$ a continuous action 
of $\Gamma$ on $X$. Then there is an $x \in X$ which is minimal.
\end{fact}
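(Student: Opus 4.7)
The plan is to mimic the usual Zorn's lemma construction of a minimal closed invariant set, with the wellordered base $\{U_\eta\}_{\eta<\lambda}$ replacing the choice principle. Specifically, I will define by transfinite recursion a decreasing sequence $(C_\eta)_{\eta \leq \lambda}$ of nonempty closed $\Gamma$-invariant subsets of $X$. Set $C_0 = X$; at a limit $\mu$, take $C_\mu = \bigcap_{\eta < \mu} C_\eta$; and at a successor, define
\[ C_{\eta+1} = \begin{cases} C_\eta \setminus (\Gamma \cdot U_\eta) & \text{if this set is nonempty,} \\ C_\eta & \text{otherwise.} \end{cases} \]
Continuity of the action makes each saturation $\Gamma \cdot U_\eta = \bigcup_{g \in \Gamma} g \cdot U_\eta$ open and $\Gamma$-invariant, so each $C_\eta$ remains closed and invariant. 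The sequence is decreasing and every entry is nonempty, so the family has the finite intersection property; compactness of $X$ in its FIP form (equivalent in $\zf$ to the open-cover definition) then ensures each limit $C_\mu$, and in particular $C := C_\lambda$, is nonempty.

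Next I will argue that every $x \in C$ is minimal. It suffices to show $\overline{[x]} = C$: for then any $y \in C = \overline{[x]}$ satisfies, by the same argument applied to $y$ in place of $x$, $\overline{[y]} = C = \overline{[x]}$, so the induced action on $\overline{[x]}$ is minimal. To verify $\overline{[x]} = C$, suppose for contradiction that some $y \in C$ lies outside $\overline{[x]}$, and pick $\eta < \lambda$ with $y \in U_\eta \subseteq X \setminus \overline{[x]}$. Then $[x] \cap U_\eta = \varnothing$, which forces $x \notin \Gamma \cdot U_\eta$, so $x \in C_\eta \setminus (\Gamma \cdot U_\eta)$ and this set is nonempty. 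The construction therefore set $C_{\eta+1} = C_\eta \setminus (\Gamma \cdot U_\eta)$, which excludes $y$ since $y \in U_\eta \subseteq \Gamma \cdot U_\eta$; but $y \in C \subseteq C_{\eta+1}$, a contradiction.

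The main point requiring care is to verify that the whole argument really lives in $\zf$. The transfinite recursion is unproblematic because each $C_{\eta+1}$ is given by an explicit formula in terms of $C_\eta$ and $U_\eta$, so no choice function is invoked as we traverse $\lambda$. Nonemptiness at limits rests only on the FIP characterization of compactness, whose equivalence with the open-cover definition is a standard $\zf$-fact. Picking some $x \in C$ at the end is legal in $\zf$ since $C$ is a specific nonempty set. In effect, the wellordering of the base does precisely the work that Zorn's lemma normally does; the pleasant feature is that, since this work is carried out by a single explicit recursive construction, no form of choice is consumed anywhere.
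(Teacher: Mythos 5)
Your proof is correct and follows essentially the same approach as the paper: transfinite recursion removing saturations $\Gamma\cdot U_\eta$ of basic open sets, with the wellordered base doing the work of choice. The paper's version picks the least applicable index and halts when a minimal set is reached (then argues termination), whereas you simply process every $\eta<\lambda$ and verify minimality of $C_\lambda$ directly afterwards — a mildly cleaner bookkeeping of the same construction.
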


If the action of $\Gamma$ is only Borel, then the same conclusion holds
if $X$ is compact and Polish \cite[Lemma 2.4.4]{GJS2}.

\begin{proof}
Let $K_0=X$. We define by transfinite recursion on the ordinals a sequence
of non-empty compact sets $K_\alpha\subseteq X$ which are decreasing, in that if $\alpha <\beta$
then $K_\beta \subseteq K_\alpha$, and also invariant, in that if $x \in K_\alpha$ then 
$[x]=\{ g\cdot x \colon g \in \Gamma\} \subseteq K_\alpha$. For $\alpha$ limit we set $K_\alpha =\bigcap_{\beta<\alpha}
K_\beta$. For the successor case, suppose $K_\alpha$ is defined, and is non-empty, compact, and invariant.
If $K_\alpha$ is minimal, we are done. Otherwise there is a least $\eta <\lambda$
such that $U_\eta \cap K_\alpha\neq \emptyset$ and $K_\alpha- \sat(U_\eta)\neq \emptyset$
(here $\sat(U)=\{ y \colon \exists x \in U \exists g \in \Gamma\ y=g\cdot x\}$ is the saturation of
$U$ under the equivalence relation on $X$ generated by $\Gamma$). This exists since we are assuming there is a
non-empty, compact (hence closed), invariant $K\subsetneq K_\alpha$. Let 
$K_{\alpha+1}=K_\alpha- \sat(U_\eta)$. Since the action of $\Gamma$ is continuous, $\sat(U_\eta)$
is open, so $K_{\alpha+1}$ is non-empty and compact. It is also
invariant (the difference of two invariant sets), and properly contained in $K_\alpha$. 
As the sets $K_\alpha$ are decreasing, the sequence must terminate in some $K_\alpha$
which is minimal.
\end{proof}

\begin{cor}[$\zf$]
A continuous action of a Polish group $\Gamma$ on a compact Polish space $X$
has a minimal element.
\end{cor}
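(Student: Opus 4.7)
The plan is to reduce the corollary directly to the preceding Fact by checking its hypotheses. First I would observe that a compact Polish space $X$ is metrizable and hence $T_2$, and is second countable: fixing any compatible complete metric $d$ and countable dense subset $D \subseteq X$ (both part of the standard data of a Polish space), the collection $\{B_d(p, 1/n) : p \in D,\ n \geq 1\}$ is a countable base, which can be enumerated as $\{U_n\}_{n < \omega}$ without any appeal to choice. This provides a wellordered base of order type at most $\omega$, which is exactly what the Fact requires for the topological structure of $X$.

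Next I would address the group action side. The Fact is stated for an abstract group $\Gamma$ carrying no topology; inspecting its proof, the only continuity hypothesis actually used is that for each fixed $g \in \Gamma$ the translation map $x \mapsto g \cdot x$ is continuous, so that $\sat(U_\eta)$ is open at the successor step of the transfinite recursion. A jointly continuous action of a Polish group on $X$ trivially has this separate continuity in $x$, so one simply forgets the topology on $\Gamma$ and invokes the Fact to obtain a minimal $x \in X$.

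I do not anticipate any real obstacle, since the argument is essentially a one-line deduction once the hypotheses are verified. The only subtle point worth flagging is the $\zf$-legitimacy of producing the wellordered countable base, but this is immediate because both a compatible complete metric and a countable dense subset are part of (or definable from) the presentation of a Polish space, so no form of $\ac$ is needed to carry out the reduction.
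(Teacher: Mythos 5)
Your proposal is correct and takes precisely the route the paper intends (the paper states the Corollary without a separate proof, as an immediate consequence of the preceding Fact). You correctly identify the two points that need checking: that a compact Polish space is $T_2$ with a $\zf$-definable wellordered (indeed $\omega$-ordered) base coming from a fixed complete metric and countable dense set, and that the topology on $\Gamma$ can be discarded because the Fact's proof only uses continuity of each translation $x \mapsto g \cdot x$ (to get $\sat(U_\eta) = \bigcup_{g} g \cdot U_\eta$ open), which joint continuity trivially supplies.
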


In the case of $2^\Gamma$, minimality
is captured by the following combinatorial condition. We will use the following
fact repeatedly.

\begin{lemma}
A point $x \in 2^\Gamma$ is minimal if and only if for every finite
$A \subseteq \Gamma$ there is a finite $T \subseteq \Gamma$ such that
$$\forall g \in \Gamma \ \exists t \in T \ \forall a \in A \quad
x(g t a) = x(a).$$
\end{lemma}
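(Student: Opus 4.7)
The plan is to unravel the definition of minimality into a statement about basic open neighborhoods of $x$ in $2^\Gamma$, use compactness of $\overline{[x]}$ for one direction, and a diagonal approximation argument for the other. Recall the convention $(g\cdot x)(h)=x(g^{-1}h)$, and let $U_A(x)=\{z\in 2^\Gamma \colon z\res A = x\res A\}$ denote the basic clopen neighborhood of $x$ determined by a finite $A\subseteq \Gamma$. Minimality of $x$ means $\overline{[y]}=\overline{[x]}$ for every $y\in\overline{[x]}$; equivalently, for every $y\in\overline{[x]}$ and every finite $A\subseteq \Gamma$ there is some $h\in\Gamma$ with $h\cdot y\in U_A(x)$, which unpacks as $y(h^{-1}a)=x(a)$ for all $a\in A$.

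For the forward direction, fix a finite $A$ and consider the open cover $\{h^{-1}\cdot U_A(x) \colon h\in\Gamma\}$ of the compact set $\overline{[x]}$ (it is a cover precisely by the reformulation of minimality). Extract a finite subcover indexed by $H\subseteq \Gamma$, so for every $y\in\overline{[x]}$ there exists $h\in H$ with $h\cdot y\in U_A(x)$. Specializing to $y=g\cdot x$ for arbitrary $g\in\Gamma$, one obtains $h\in H$ with $(hg\cdot x)(a)=x(a)$ for all $a\in A$, i.e.\ $x(g^{-1}h^{-1}a)=x(a)$. Reindexing by replacing $g$ by its inverse and setting $T=\{h^{-1} \colon h\in H\}$, this becomes exactly $\forall g\in\Gamma\ \exists t\in T\ \forall a\in A\ x(gta)=x(a)$.

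For the converse, assume the combinatorial condition and fix $y\in\overline{[x]}$ together with a finite $A\subseteq\Gamma$; I must produce $h\in\Gamma$ with $h\cdot y\in U_A(x)$. Apply the hypothesis to $A$ to obtain the finite $T\subseteq \Gamma$, and enlarge to the finite set $TA=\{ta \colon t\in T, a\in A\}$. Since $y\in\overline{[x]}$, pick $g\in\Gamma$ with $(g\cdot x)\res TA = y\res TA$, so $x(g^{-1}ta)=y(ta)$ for all $t\in T,a\in A$. Applying the combinatorial condition at the group element $g^{-1}$ yields some $t\in T$ with $x(g^{-1}ta)=x(a)$ for all $a\in A$, and combining these equalities gives $y(ta)=x(a)$, i.e.\ $t^{-1}\cdot y\in U_A(x)$. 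This witnesses $x\in\overline{[y]}$, and since $y\in\overline{[x]}$ was arbitrary, $x$ is minimal.

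The only genuine subtlety is the compactness step in the forward direction: one must justify that $\overline{[x]}$ is compact (it is closed in the compact space $2^\Gamma$) and that the cover by translates $h^{-1}\cdot U_A(x)$ is actually a cover, which is precisely the reformulation of minimality noted above. Everything else is bookkeeping with the action convention, in particular keeping careful track of inverses when passing between the topological statement $h\cdot y\in U_A(x)$ and the pointwise statement $x(gta)=x(a)$.
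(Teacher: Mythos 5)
Your proof is correct and implements precisely the ``simple compactness argument'' the paper alludes to (it defers to \cite{GJS2} rather than giving details): compactness of $\overline{[x]}$ yields the finite $T$ in the forward direction, and a finite-approximation argument from $y\in\overline{[x]}$ handles the converse. The bookkeeping with the action convention and inverses is all accurate.
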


\begin{proof}
This is well known and follows from a simple compactness argument.
A proof can be found, for example, in \cite{GJS2}.
\end{proof}

It was proved in \cite{GJS2} that minimal $2$-colorings exist on every
countable group $\Gamma$.

\subsection{Borel complete sections, Borel graphs, and geometry on orbits}
For a Polish space $X$ with a countable Borel equivalence relation
$E$, a {\em complete section} $S$ is a subset of $X$ that meets every
orbit of $X$, i.e., for any $x\in X$, $S\cap [x]\neq \varnothing$. A
useful fact in the theory of countable Borel equivalence relations is
the theorem of Slaman--Steel that for any countable Borel equivalence
relation $E$ with only infinite equivalence classes, there is a
decreasing sequence of Borel complete sections
$$ S_0\supseteq S_1\supseteq S_2\supseteq \cdots\cdots $$ such that
$\bigcap_n S_n=\varnothing$ \cite{SlamanSteel}.

A particularly interesting collection of examples is given by the
Bernoulli shifts of $\mathbb{Z}^d$ on $F(2^{{\mathbb Z}^d})$ for
$d\geq 1$. The Borel complete sections given by the Slaman-Steel
theorem lead to a quick proof that the orbit equivalence relation 
in the case $d=1$ is
{\em hyperfinite}. Recall that a countable equivalence relation
on a standard Borel space $X$ 
is hyperfinite if 
there is a an increasing sequence of Borel equivalence
relations
$$ R_0\subseteq R_1\subseteq R_2\subseteq \cdots\cdots $$ on $X$ with all
$R_n$-equivalence classes finite, such that $E=\bigcup_n R_n$.
Weiss later showed that this holds for all $d$ (from which
it follows that any Borel action of $\Z^d$ is hyperfinite). 
In these examples we rely on the geometric structure of the
Cayley graph of the group to understand the behavior of the orbit
equivalence relation. Complete sections are frequently built to posses
properties of geometric significance and for this reason are
informally called sets of {\it markers}.

The following notions and terminology are tools to study the geometric
structures. Fix $d\geq 1$. For an element $g=(g_1,\dots,
g_d)\in{\mathbb Z}^d$, let
$$ \|g\|=\sum_{i=1}^d |g_i|. $$ The metric induced by this norm is
often called the {\it taxi-cab metric}.  If $x, y\in F(2^{{\mathbb
    Z}^d})$ are in the same orbit, then there is a unique
$g_{x,y}\in{\mathbb Z}^d$ with $g_{x,y}\cdot x=y$, and we define
$\rho(x,y)=\|g_{x,y}\|$. If $y\not\in[x]$, we just let
$\rho(x,y)=+\infty$. This $\rho$ is thus a distance function that is a
metric on each orbit.

For $A\subseteq F(2^{{\mathbb Z}^d})$ we also define
$\rho(x,A)=\min\{\rho(x,y)\,:\, y\in A\}$. If $A$ is a complete
section, then $\rho(x,A)<+\infty$ for any $x$. If $\{S_n\}$ is a
decreasing sequence of Borel complete sections with $\bigcap_n
S_n=\varnothing$ as in the Slaman--Steel theorem, then for any $x\in
F(2^{{\mathbb Z}^d})$, we have $\lim_n \rho(x, S_n)=+\infty$. In fact,
the function $\varphi_x(n)=\rho(x, S_n)$ is a monotone increasing
function diverging to infinity for each $x$. Our first theorem in this
paper, which is presented in the next section, implies that this
function grows arbitrarily slowly. More precisely, given any function
$f(n)$ with $\limsup_n f(n)=+\infty$, we show that there is $x$ with
$\varphi_x(n)<f(n)$ for infinitely many values of $n$.

In general, for any finitely generated group $\Gamma$ with a finite
symmetric generating set $S$ (meaning $\gamma^{-1}\in S$ for every
$\gamma\in S$), a number of standard objects can be associated with
the space $2^\Gamma$. First, there is the {\it Cayley graph}
$C_S(\Gamma)=(\Gamma, D)$ with $\Gamma$ as the vertex set and with the
edge relation $D$ defined by
$$ (g,h)\in D\iff \exists \gamma\in S\ ( g=\gamma h). $$ The Cayley
graph induces a Borel graph $C_S(2^\Gamma)=(2^\Gamma, \tilde{D})$ on
$2^\Gamma$, where the edge relation $\tilde{D}$ is defined as
$$ (x,y)\in \tilde{D}\iff \exists \gamma\in S\ (x=\gamma\cdot y). $$
The geodesics in $C_S(2^\Gamma)$ give a distance function
$\rho_\Gamma$, i.e., $\rho_\Gamma(x,y)$ is the length of the shortest
path from $x$ to $y$ in $C_S(2^\Gamma)$. Note that the distance
function $\rho$ defined above for $2^{\Z^d}$ is an example of the more
general $\rho_\Gamma$, with $S$ being the set of standard generators
for $2^{\Z^d}$.

If $A\subseteq 2^\Gamma$, the {\it boundary} of $A$, denoted $\partial
A$, is the set
$$ \partial A=\{ x\in A\,:\, \exists \gamma\in S\ \gamma\cdot x\not\in
A\}. $$

\section{Orbit Forcing}
We start with a very general forcing construction in which one
generically builds an element in a Polish space with a countable Borel
equivalence relation.

\begin{defn}
Let $E$ be a countable Borel equivalence relation on a Polish space
$X$, and let $x \in X$.  The {\em orbit forcing} $\bbP_x=\bbP_x^E$ is
defined by
$$ \bbP_x=\{U \subseteq X\,:\, \mbox{$U$ is open and } U \cap [x]_E
\neq \varnothing\} $$ with its elements ordered by inclusion, that is,
$U \leq U'$ iff $U \subseteq U'$.
\end{defn}

Since $U \cap \ocl{[x]}\neq \varnothing$ iff $U \cap [x] \neq
\varnothing$, we can view the sets $U\cap \ocl{[x]}$ as the objects in
the forcing notion, in which case the forcing notion $\bbP_x$ can
simply be viewed as ordinary Cohen forcing on the closed subspace
$Y=\ocl{[x]}$ of $X$.  Thus, as with usual Cohen forcing, we can
regard forcing arguments using $\bbP_x$ as category arguments on the
space $\ocl{[x]}$. Nevertheless, we will see that the forcing proofs
can be more intuitive than category arguments.

As remarked in the previous section, we can view the countable Borel
equivalence relation $E$ as coming from a Borel action of a countable
group $\Gamma$, and view the space $X$ as a certain invariant Borel
subset of $2^{\Gamma\times\omega}$. Now if $G$ is $\bbP_x$-generic
over $V$, the space $X^{V[G]}$ continues to be a standard Borel space
and $E^{V[G]}$ continues to be a countable Borel equivalence
relation. Moreover, the generic $G$ can be identified with an element
$x_G\in X^{V[G]}$. With a slight abuse of terminology we will refer to
$x_G$ as a generic element of $X$ for the orbit forcing $\bbP_x$. Note
that we always have that $x_G \in \ocl{[x]}_E$, where both the orbit
and the closure are computed in $V[G]$.

As a first application of the orbit-forcing we present a general
result about sequences of complete sections in equivalence relations
generated by continuous actions of countable groups on compact spaces.
This includes the case of Bernoulli shift actions on $2^{\Gamma}$ and
$F(2^\Gamma)$ since there exist compact invariant sets $X \subseteq
F(2^\Gamma)$ \cite{GJS1,GJS2}.

\begin{thm} \label{dtm}
Let $\Gamma$ be a countable group, $X$ a compact Polish space, and
$\Gamma \acts X$ a continuous action giving rise to the orbit
equivalence relation $E$.  Let $(A_n)_{n \in \N}$ be a sequence of
finite subsets of $\Gamma$ such that every finite subset of $\Gamma$
is contained in some $A_n$. Let $(S_n)_{n \in \N}$ be a sequence of
Borel complete sections of $E$. Then there is an $x \in X$ such that
for infinitely many $n$ we have $A_n\cdot x \cap S_n \neq \varnothing$.
\end{thm}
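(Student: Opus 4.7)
My plan is to use the orbit forcing $\bbP_x$ (which, as the text notes, is essentially Cohen forcing on $\ocl{[x]}$) to produce in the generic extension $V[G]$ a point $x_G \in X$ witnessing the conclusion, and then pass back to the ground model via $\Sigma^1_1$-absoluteness. First I would replace $X$ by a minimal compact $\Gamma$-invariant subset $Y$, which exists by the Fact in Section~2; since each $S_n \cap Y$ is a Borel complete section of $E \!\upharpoonright\! Y$ and the conclusion asks only for some $x \in Y \subseteq X$, I may assume $X$ itself is minimal. Fix any $x \in X$ and set $B_n = \bigcup_{g \in A_n} g^{-1} S_n$, so that $A_n \cdot y \cap S_n \neq \varnothing$ is equivalent to $y \in B_n$.

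My strategy is to show that for every $N$ the set
\[
D_N = \{U \in \bbP_x : \exists n \geq N \ \, U \Vdash x_G \in B_n\}
\]
is dense in $\bbP_x$. Once that is established, the generic filter hits every $D_N$, so $x_G \in B_n$ for infinitely many $n$ in $V[G]$; the nonemptiness of the Borel set $\bigcap_N \bigcup_{n \geq N} B_n$ is $\Sigma^1_1$ in parameters from $V$ and hence absolute, so an $x \in X$ with the desired property already exists in $V$.

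To verify the density of $D_N$, take $U \in \bbP_x$. By minimality of $X$ there is a finite $T_U \subseteq \Gamma$ with $T_U \cdot U = X$. The exhausting hypothesis on $(A_n)$ forces $\{n : A_n \supseteq T_U\}$ to be infinite---given any finite $F \supseteq T_U$ some $A_n$ contains $F$, and by repeatedly adjoining fresh elements to $F$ one obtains arbitrarily large such indices---so I choose $n \geq N$ with $A_n \supseteq T_U$; then $A_n \cdot U = X$. Next, $S_n$ is a Borel complete section on the Baire space $X$ and $X = \bigcup_{g \in \Gamma} g \cdot S_n$; since translates of a meager set are meager and countable unions of meagers are meager, $S_n$ cannot be meager, so its Baire property gives a nonempty open $V_n \subseteq X$ in which $S_n$ is comeager. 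Because $V_n \subseteq X = A_n \cdot U$, we may pick $y \in V_n$, $g \in A_n$, and $u \in U$ with $y = g \cdot u$; then $u \in U \cap g^{-1} V_n$, and $U' := U \cap g^{-1} V_n$ is a nonempty open subset of $U$ meeting $[x]$ by density, so $U' \leq U$ in $\bbP_x$. Translating by $g^{-1}$, the set $g^{-1} S_n$ is comeager in $g^{-1} V_n$ and hence in $U'$; since $g \in A_n$ we have $g^{-1} S_n \subseteq B_n$, so $U' \Vdash x_G \in B_n$, placing $U' \in D_N$ as required.

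The main obstacle is precisely this last step. For merely Borel (not clopen) $S_n$ one cannot simply intersect $B_n$ with $U$ to produce a subcondition forcing membership; the delicate point is to combine the non-meagerness of $S_n$ (from completeness plus Baire), the syndeticity supplied by minimality, and the exhausting growth of $(A_n)$, to locate a comeager translate $g^{-1} S_n$ inside a suitable $U' \leq U$.
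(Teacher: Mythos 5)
Your proof is correct and takes a genuinely different route through the same forcing poset. The paper's forcing argument exploits absoluteness: it uses that each $S_n$ remains a Borel complete section in $M[G]$ (a $\bp^1_1$ fact), picks a point $y \in [x_G]\cap S_{i_{n+1}}$, translates it into $U_n$ via a recurrence set, and uses the genericity of that translate to extract a stronger condition; the dense sets are never isolated explicitly, and the final step is again a $\bs^1_1$-absoluteness argument. Your argument instead isolates the dense sets $D_N$ directly and proves density purely by Baire category in the ground model: the completeness of $S_n$ forces it to be non-meager, its Baire property yields a nonempty open $V_n$ in which it is comeager, and minimality plus the exhausting growth of $(A_n)$ lets you pull a translate $g^{-1}V_n$ (for $g\in A_n$) back into $U$ so that $g^{-1}S_n\subseteq B_n$ is comeager on the subcondition. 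This is closer in spirit to the paper's alternative topological proof (strong Choquet game on a $G_\delta$ set where each $S_n$ is relatively open) than to its forcing proof, but organized more directly. What each buys: the paper's forcing version extends to absolutely $\bd^1_2$ complete sections (Remark 3.4(1)) because it only needs absoluteness, whereas your argument needs the Baire property of $S_n$ (automatic for Borel, but an extra hypothesis at higher pointclasses); on the other hand yours avoids any appeal to absoluteness of completeness in the generic extension and makes the ``comeager on a condition'' mechanism transparent. One small bookkeeping point: minimality (Lemma 3.5) gives a finite $T$ with $\forall z\,(T\cdot z\cap U\neq\varnothing)$, i.e.\ $T^{-1}\cdot U=X$, so the set you want the $A_n$ to swallow is $T^{-1}$ rather than $T$; this is cosmetic since $T$ is finite iff $T^{-1}$ is.
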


\begin{proof}
Since $X$ is compact, we may fix an $x \in X$ which is minimal. Let
$\bbP=\bbP_x$ be the the corresponding orbit forcing. Let $\kappa$ be
a large enough regular cardinal and $Z\prec V_\kappa$ a countable
elementary substructure containing $\Gamma$ and all the real
parameters used in the definitions of $X$ (as a Borel subspace of
$2^{\Gamma\times\omega}$), the action $\Gamma\acts X$, and the
sequences $\{ A_n\}$ and $\{S_n\}$.  Let $\pi \colon M \to Z$ be the
inverse of the transitive collapse. Let $\pi(\bbP')=\bbP$. Fix an
arbitrary $G$ which is $\bbP'$-generic over $M$, and let $x_G$ be the
unique element of $\bigcap G$. Let $\dot{x}_G$ be the canonical
$\bbP'$-name for $x_G$. Note that for any $\gamma\in \Gamma$,
$\gamma\cdot G$ is also $\bbP'$-generic over $M$, and we have
$M[G]=M[\gamma\cdot G]$ and $x_{\gamma\cdot G}=\gamma\cdot x_G$.

Working in $V$, we will define a sequence $U_n$ of conditions in
$\bbP'$ such that the filter $H$ generated by $\{U_n\}$ is
$\bbP'$-generic over $M$, and an increasing sequence $i_n$ of integers
such that $$U_n \forces (A_{i_n} \cdot \xd_G \cap S_{i_n} \neq
\varnothing).$$
To see this suffices, let $x_H$ be the unique element of $\bigcap_n
U_n$. By assumption, $x_H$ is $\bbP'$-generic over $M$. From the
forcing theorem, we have in $M[H]$ that $A_{i_n}\cdot x_H \cap
S_{i_n}\neq \varnothing$. Therefore, the statement $ \exists y \forall
n \, (A_{i_n}\cdot y\cap S_{i_n}\neq\varnothing) $ is true in
$M[H]$. Since this is a $\bs^1_1$ statement, by absoluteness it is
also true in $V$, which gives the desired result.

Let $D_0, D_1,\dots$ enumerate the dense subsets of $\bbP'$ which lie
in $M$.  To begin, let $i_0$ be such that the identity $e_{\Gamma}\in
A_{i_0}$. The statement that $S_{i_0}$ is a Borel complete section is
$\bp^1_1$, and hence by absoluteness it continues to be a Borel
complete section in $M[G]$. Fix any $y \in [x_G] \cap S_{i_0}$. There
is a certain $\gamma\in\Gamma$ such that $y=\gamma\cdot x_G$. Since
$y$ is also $\bbP'$-generic over $M$, there is a $U'_0 \in \bbP'$ such
that $U'_0 \forces (\xd_G \in S_{i_0})$. This gives $U'_0 \forces
(A_{i_0} \cdot \xd_G \cap S_{i_0}\neq \varnothing)$. Let $U_0
\subseteq U'_0$ be in $\bbP' \cap D_0$.

In general, suppose $U_n \in \bbP'$ and $i_n$ are given so that $U_n
\forces (A_{i_n} \cdot \xd_G \cap S_{i_n} \neq \varnothing)$. By
minimality of $x$, there is a finite $T \subseteq \Gamma$ such that
for all $z \in \overline{[x]}$ there is a $t \in T$ with $t \cdot z
\in U_{n}$. Fix such a $T$. The statement $\forall
z\in\overline{[x]}\ (T\cdot z\cap U_n\neq\varnothing)$ is $\bp^1_1$
and continues to be true in $M[G]$. Since $x_G\in \overline{[x]}$ in
$M[G]$, we have that for any $z\in[x_G]$, $T\cdot z\cap
U_n\neq\varnothing$. Let $i_{n+1}$ be such that $T^{-1} \subseteq
A_{i_{n+1}}$. Without loss of generality we may assume that
$i_{n+1}>i_n$.  Now fix any $y \in [x_G] \cap S_{i_{n+1}}$. Fix $t \in
T$ such that $t \cdot y \in U_{n}$.  So, $t^{-1}\cdot (t\cdot y) \in
S_{i_{n+1}}$. As $t\cdot y$ is generic and $t^{-1} \in A_{i_{n+1}}$ we
have that there is a $U'_{n+1} \subseteq U_{n}$ in $\bbP'$ such that
$U'_{n+1} \forces (A_{i_{n+1}} \cdot \xd_G \in S_{i_{n+1}})$. Let
$U_{n+1} \subseteq U'_{n+1}$ be in $\bbP'\cap D_{n+1}$. This completes
the construction of the $U_n$ and finishes the proof of the theorem.
\end{proof}

We have the following immediate corollary concerning complete sections
in $F(2^{\Z^n})$.

\begin{cor} \label{cor:dtm}
Let $f \colon \N \to \N$ be such that $\lim \sup_n f(n)=+\infty$. Let
$\{ S_n\}$ be a sequence of Borel complete sections of $F(2^{\Z^d})$.
Then there is an $x \in F(2^{\Z^d})$ such that for infinitely many $n$
we have $\rho(x,S_n)<f(n)$.
\end{cor}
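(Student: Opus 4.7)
The plan is to deduce this directly from Theorem \ref{dtm} with a judicious choice of the finite sets $A_n$. The natural choice is to let $A_n$ be the open taxi-cab ball
\[
A_n = \{ g \in \Z^d : \|g\| < f(n)\}.
\]
Then whenever $A_n \cdot x \cap S_n \neq \varnothing$, there is some $g \in A_n$ with $g\cdot x \in S_n$, and therefore $\rho(x, S_n) \leq \|g\| < f(n)$. So a point $x$ witnessing the conclusion of Theorem \ref{dtm} for this sequence would be exactly what we need.

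Next I would check that the sequence $\{A_n\}$ satisfies the hypothesis of Theorem \ref{dtm}, namely that every finite $F \subseteq \Z^d$ is contained in some $A_n$. Given such $F$, let $k = \max_{g \in F}\|g\|$. Since $\limsup_n f(n) = +\infty$, there are infinitely many $n$ with $f(n) > k$, and for any such $n$ we have $F \subseteq A_n$. (In fact infinitely many $A_n$ contain $F$, which is more than needed.)

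The only remaining wrinkle is that Theorem \ref{dtm} requires the ambient space to be \emph{compact}, whereas $F(2^{\Z^d})$ itself is not compact. I would handle this exactly as the authors suggest in the remark following Theorem \ref{main1}: by the results in \cite{GJS1, GJS2}, there exists a compact $\Z^d$-invariant subset $X \subseteq F(2^{\Z^d})$ (for instance, the orbit closure of a minimal $2$-coloring on $\Z^d$). The restricted action $\Z^d \acts X$ is continuous and the sets $S_n \cap X$ are Borel in $X$; since $S_n$ meets every orbit of $F(2^{\Z^d})$ and the orbits of the restricted action are simply those orbits that happen to lie in $X$, each $S_n \cap X$ remains a Borel complete section of the restricted equivalence relation.

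Finally I would apply Theorem \ref{dtm} to $\Z^d \acts X$, the sets $A_n$, and the complete sections $S_n \cap X$, to obtain some $x \in X \subseteq F(2^{\Z^d})$ with $A_n \cdot x \cap (S_n \cap X) \neq \varnothing$, and hence $\rho(x, S_n) < f(n)$, for infinitely many $n$. The only substantive step here is the invocation of Theorem \ref{dtm}; the main potential pitfall is simply remembering to pass to a compact invariant subspace before doing so, and verifying that the restricted $S_n \cap X$ are still complete sections (which is immediate from invariance of $X$).
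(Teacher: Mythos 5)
Your proposal is correct and follows essentially the same route as the paper: pass to a compact invariant subspace $X \subseteq F(2^{\Z^d})$ (the orbit closure of a hyperaperiodic element) and apply Theorem~\ref{dtm} with $A_n = \{g \in \Z^d : \|g\| < f(n)\}$. The only difference is that you spell out the routine verifications (that the $A_n$ satisfy the hypothesis, and that $S_n \cap X$ remain complete sections) which the paper leaves implicit.
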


\begin{proof}
Let $x$ be any 2-coloring (or hyperaperiodic element) in
$2^{\Z^d}$. Then $X=\overline{[x]}$ is a compact invariant
subspace. Apply Theorem~\ref{dtm} to $X$ with $A_n=\{
\gamma\in\Z^d\,:\, \|\gamma\|<f(n)\}$.
\end{proof}

\begin{rem} \label{rem:dtm} (1) The proof of Theorem~\ref{dtm} 
still works if each $S_n$ is just 
assumed to be absolutely $\bd^1_2$, instead of Borel.
By this we mean there are $\bs^1_2$ and $\bp^1_2$ statements 
$\varphi$ and $\psi$ respectively which define $S_n$,
and such that $\varphi$, $\psi$ continue to define complimentary sets
in all forcing extension $V[G]$ of $V$.  
There are two applications of absoluteness regarding the $S_n$
in the proof of Theorem~\ref{dtm}. 
In the first application (getting the $S_n$ to be complete sections in $M[G]$), 
the $S_n$ needs to be $\bp^1_2$, 
and in the second application (lifting the property of $y$ from 
$M[H]$ to $V$), it needs to be $\bs^1_2$.

(2) The proof of Theorem~\ref{dtm} shows that we may weaken the
hypothesis of Corollary~\ref{cor:dtm} that the $S_n$ are complete
sections to the statement that for each $x \in F(2^{\Z^d})$ and for
each $n$ there is an $m \geq n$ such that $S_m \cap [x] \neq
\varnothing$. However, we need to assume now that $\liminf_n
f(n)=+\infty$.
\end{rem}

As we mentioned above, a forcing argument using $\bbP_x$ is
essentially the same as a category argument on the subspace
$\ocl{[x]}$ of the Polish space $X$. In particular, the above proof
can be given in purely topological terms.  We feel it is worth
presenting this alternative argument explicitly.  We first recall a
concept and record a simple lemma.

\begin{defn}
Consider a Borel action of a countable group $\Gamma$ on a Polish
space $X$. A point $x \in X$ is \emph{recurrent} if for every open set
$U \subseteq X$ with $U\cap [x]\neq\varnothing$, there is a finite $T
\subseteq \Gamma$ so that for all $y\in [x]$ there is $t \in T$ with
$t \cdot y \in U$.
\end{defn}

\begin{lemma}\label{recurrent}
Let $\Gamma$ be a countable group, $X$ a compact Polish space, and
$\Gamma\acts X$ a continuous, minimal action. Then every $x \in X$ is
recurrent.
\end{lemma}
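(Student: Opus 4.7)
The plan is to prove this by a direct compactness argument, with minimality only entering through the fact that every orbit is dense in $X$. Fix $x \in X$ and an open $U \subseteq X$ with $U \cap [x] \neq \varnothing$; in particular $U$ is nonempty. The goal is to produce a single finite $T \subseteq \Gamma$ that works uniformly for all $y \in [x]$.

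First I would observe that by minimality, for every $y \in X$ the orbit $[y]$ is dense in $X$, so $[y] \cap U \neq \varnothing$, i.e., there exists $\gamma \in \Gamma$ with $\gamma \cdot y \in U$, equivalently $y \in \gamma^{-1} \cdot U$. Since the action is continuous, each translate $\gamma^{-1} \cdot U$ is open, and the previous sentence says that the family $\{\gamma^{-1} \cdot U : \gamma \in \Gamma\}$ is an open cover of $X$. By compactness of $X$, there is a finite subcover, say indexed by $\gamma_1,\dots,\gamma_n \in \Gamma$. I would then set $T = \{\gamma_1,\dots,\gamma_n\}$, and verify: for any $y \in X$ (in particular for any $y \in [x]$), there is some $i$ with $y \in \gamma_i^{-1} \cdot U$, i.e., $\gamma_i \cdot y \in U$, which is exactly the defining property of recurrence.

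There is no real obstacle here. The lemma is essentially the statement that in a compact minimal system every nonempty open set is syndetic in the group sense, and the two ingredients used (density of every orbit, and compactness producing a finite subcover) are both already available from the hypotheses. It is worth noting that the argument in fact gives the stronger conclusion that a single $T$ works for every $y \in X$, not just $y \in [x]$.
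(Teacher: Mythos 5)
Your argument is correct, and it takes a genuinely different (and arguably cleaner) route than the paper. You use the open-cover form of compactness directly: minimality makes $\{\gamma^{-1}\cdot U : \gamma\in\Gamma\}$ an open cover of $X$, and a finite subcover immediately hands you the syndetic set $T$. The paper instead argues by contradiction using sequential compactness: it enumerates $\Gamma$, supposes that for every $n$ some $x_n\in[x]$ avoids $U$ under all of $\gamma_1,\dots,\gamma_n$, passes to an accumulation point $y$ of $(x_n)$, and uses continuity together with $U$ open to conclude $\gamma_i\cdot y\notin U$ for every $i$, contradicting density of $[y]$. Both proofs lean on exactly the two hypotheses (compactness and density of every orbit), but yours is direct, avoids choosing an enumeration and a convergent subnet, and makes explicit the standard fact that in a compact minimal system every nonempty open set is syndetic — with the bonus observation, which you correctly note, that the resulting $T$ works uniformly for all $y\in X$, not just $y\in[x]$. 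No gaps.
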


\begin{proof}
Fix a point $x \in X$ and a non-empty open set $U \subseteq
X$. Enumerate $\Gamma$ as $\gamma_1, \gamma_2, \ldots$ and set $T_n =
\{\gamma_i \,:\, 1 \leq i \leq n\}$. Towards a contradiction, suppose
that for every $n$ there is $x_n \in [x]$ with $T_n \cdot x_n \cap U =
\varnothing$. Since $X$ is compact, there is an accumulation point $y$
of the sequence $x_n$. Now for any $i \in \N$ we have $\gamma_i \cdot
x_n \not\in U$ for every $n \geq i$. Since $\Gamma$ acts continuously and
$U$ is open, it follows that $\gamma_i \cdot y \not\in U$. Thus the
orbit of $y$ does not meet $U$ and hence is not dense, a contradiction
to the minimality of the action.
\end{proof}

To give a topological proof of Theorem~\ref{dtm}, we will make use of
the strong Choquet game. Let us recall this game. The Strong Choquet
game on a topological space $X$ consists of two players (I and II) who
play in alternating turns. On each of player I's turns, player I plays
a pair $(U, x)$ consisting of an open set $U$ and a point $x \in
U$. Player II plays an open set on each of her turns. A play of the
game is illustrated below.
$$ \begin{array}{lcccccccl} \mbox{I} & (U_0, x_0) & & (U_1, x_1) & &
  \cdots & & \cdots & \\ \mbox{II} & & V_0 & & V_1 & & \cdots &
  &\cdots
\end{array}
$$ The game requires that $U_{n+1}\subseteq V_n$ and $x_n\in
V_n\subseteq U_n$ for all $n$. The first player breaking these rules
loses. Player II wins the game if and only if $\bigcap_n U_n=\bigcap_n
V_n\neq\varnothing$.  A space is called {\em strong Choquet} if player
II has a winning strategy for this game.  It is an easy fact that
every completely metrizable space is strong Choquet (cf., e.g.,
\cite[\S 4.1]{Gao}).

We now present the alternative argument for Theorem~\ref{dtm}.  By
considering $\ocl{[x]}$ where $x \in X$ is minimal, we may assume that
$\Gamma$ acts continuously and minimally on $X$.  As Borel sets in a
Polish space have the Baire property, we can find a $\Gamma$-invariant
dense $G_\delta$ set $X' \subseteq X$ such that each set $S_n \cap X'$
is relatively open in $X'$. By Lemma~\ref{recurrent}, every $x \in X$
is recurrent and it is easy to see that this implies that every $x \in
X'$ is recurrent with respect to the relative topology of $X'$. We
will consider the strong Choquet game on $X'$. Since $X'$ is
$G_\delta$ in $X$, $X'$ is completely metrizable, and thus player II
has a winning strategy. Fix a winning strategy $\tau$ for player
II. From this point forward we will only work with $X'$, not $X$.  Let
$i_0$ be such that the identity $e_{\Gamma} \in A_{i_0}$.  To begin
the game, fix any $x_0 \in S_{i_0}\cap X' \subseteq A_{i_0}^{-1} \cdot
S_{i_0} \cap X'$ and have player I play the open set $U_0 =
A_{i_0}^{-1} \cdot S_{i_0} \cap X'$ and the point $x_0$. Such an $x_0$
exists since $X'$ is $\Gamma$-invariant and $S_{i_0}$ is a complete
section. Let $V_0 \subseteq X'$ be the open set played by player II
according to $\tau$. As $x_0$ is recurrent, there is a finite $T_0
\subseteq \Gamma$ so that for all $y \in [x_0]$ we have $T_0 \cdot y
\cap V_0 \neq \varnothing$. The largeness assumption on the $A_n$'s
implies that we can find $i_1 >i_0$ with $T_0^{-1} \subseteq A_{i_1}$.

The game now proceeds inductively. Assume that player I has played the
pairs $(U_0, x_0), (U_1, x_1), \ldots, (U_{n}, x_{n})$, player II has
played the open sets $V_0, V_1, \ldots, V_{n}$ according to her
winning strategy $\tau$, and that $i_{n+1}$ has been defined and the
following two conditions are satisfied:
\begin{enumerate}
\item[\rm (i)] for every $y \in [x_{n}]$ we have $A_{i_{n+1}}^{-1}
  \cdot y \cap V_{n} \neq \varnothing$;
\item[\rm (ii)] for all $y \in V_{n}$ and all $1 \leq m \leq n$ we
  have $A_{i_m} \cdot y \cap S_{i_m} \neq \varnothing$.
\end{enumerate}
Fix a point $x'_{n+1} \in S_{i_{n+1}} \cap [x_{n}]$. Such $x'_{n+1}$
exists since $S_{i_{n+1}}$ is a complete section. By clause (i) we may
pick a point $x_{n+1} \in A_{i_{n+1}}^{-1} \cdot x'_{n+1} \cap
V_{n}$. Have player I play the point $x_{n+1}$ and the open set
$$U_{n+1} = V_{n} \cap A_{i_{n+1}}^{-1} \cdot S_{i_{n+1}}.$$ Note that
for every $y \in U_{n+1}$ and every $1 \leq m \leq {n+1}$ we have
$A_{i_m} \cdot y \cap S_{i_m} \neq \varnothing$. Let $V_{n+1}$ be the
open set played by player II according to $\tau$. Then $x_{n+1} \in
V_{n+1}$ and by recurrence there is a finite $T_{n+1} \subseteq
\Gamma$ so that for all $y \in [x_{n+1}]$ we have $T_{n+1} \cdot y
\cap V_{n+1} \neq \varnothing$. Now we can find a number
$i_{n+2}>i_{n+1}$ with $T_{n+1}^{-1} \subseteq A_{i_{n+2}}$. This
completes the induction.

Since player II has followed $\tau$, we have that there is $x \in
\bigcap_n V_n$. Thus it follows from clause (ii) that for every $m \in
\N$ we have $A_{i_m} \cdot x \cap S_{i_m} \neq \varnothing$.  This
completes the alternative proof of Theorem~\ref{dtm}. \hfill $\square$

As a consequence of the methods of Theorem~\ref{dtm} we get the following result on
the existence of recurrent points in the range of factor maps.

\begin{thm}\label{recurrentfactor}
Let $\Gamma$ be a countable group, $X$ a compact Polish space,
$\Gamma\acts X$ a continuous action , $Y$ a Polish space, and
$\Gamma\acts Y$ a Borel action. Let $\varphi \colon X \to Y$ be a
Borel equivariant map.  Then there is an $x \in X$ such that
$\varphi(x)$ is a recurrent point of $Y$.
\end{thm}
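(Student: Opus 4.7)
The plan is to reduce to the minimal case and then combine the Baire property of $\varphi$ with Lemma~\ref{recurrent} (in the style of the topological proof of Theorem~\ref{dtm} given above). Since $X$ is compact with a continuous $\Gamma$-action, first fix a minimal $x_0 \in X$ and replace $X$ by $\overline{[x_0]}$; then the action $\Gamma \acts X$ is continuous and minimal, and I lose nothing because $\varphi$ restricts to the closed invariant subspace.

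Next I would tame $\varphi$ by passing to a large invariant subset on which it becomes continuous. Fix a countable basis $\{V_n\}_{n \in \N}$ for $Y$. For each $n$ the set $\varphi^{-1}(V_n)$ is Borel, hence has the Baire property, so there is an open $W_n \subseteq X$ with $\varphi^{-1}(V_n) \bigtriangleup W_n$ contained in some meager $F_\sigma$ set $N_n$. Let $M = \bigcup_{n} \bigcup_{\gamma \in \Gamma} \gamma \cdot N_n$; since $\Gamma$ is countable and each $\gamma$ is a self-homeomorphism of $X$ (the action is continuous), $M$ is a meager $F_\sigma$ set. Put $X' = X \setminus M$; this is a $\Gamma$-invariant dense $G_\delta$ subset of $X$, and the restriction $\varphi \restriction X' \colon X' \to Y$ is continuous because $\varphi^{-1}(V_n) \cap X' = W_n \cap X'$ is relatively open in $X'$ for every $n$.

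Because $X$ is compact Polish, hence Baire, the set $X'$ is nonempty; pick any $x \in X'$. By minimality and Lemma~\ref{recurrent}, $x$ is recurrent in $X$. I would then verify that $\varphi(x)$ is recurrent in $Y$: given open $V \subseteq Y$ with $V \cap [\varphi(x)] \neq \varnothing$, equivariance gives some $\gamma \in \Gamma$ with $\gamma \cdot x \in \varphi^{-1}(V) \cap X'$. Writing $U = \varphi^{-1}(V) \cap X'$, relative openness furnishes an open $\widetilde U \subseteq X$ with $\widetilde U \cap X' = U$ and $\widetilde U \cap [x] \neq \varnothing$. Recurrence of $x$ yields a finite $T \subseteq \Gamma$ with $T \cdot y \cap \widetilde U \neq \varnothing$ for every $y \in [x]$; invariance of $X'$ gives $[x] \subseteq X'$, so $T \cdot y \cap \widetilde U = T \cdot y \cap U \subseteq \varphi^{-1}(V)$. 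Applying $\varphi$ and using equivariance turns this into $T \cdot \varphi(y) \cap V \neq \varnothing$ for every $\varphi(y) \in [\varphi(x)] = \varphi([x])$, which is exactly the recurrence of $\varphi(x)$.

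The main obstacle is the second step: arranging a single $\Gamma$-invariant domain on which the Borel map $\varphi$ is simultaneously continuous; once this is set up, the remaining transfer of recurrence along a continuous equivariant map is routine. An alternative route that stays closer to the stated methods of the paper would be to run the orbit forcing $\bbP_{x_0}$: the generic $x_G$ avoids every meager Borel set coded in the ground model, so one can check recurrence of $\varphi(x_G)$ directly in $V[G]$ and then pull back the analytic statement ``there exists $x \in X$ with $\varphi(x)$ recurrent'' to $V$ by Shoenfield absoluteness, exactly mirroring the switch between forcing and category arguments used for Theorem~\ref{dtm}.
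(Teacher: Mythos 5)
Your proof is correct and takes a genuinely different --- and more elementary --- route from the paper's. The paper proves this theorem by orbit forcing: it takes $x_G$ generic for $\bbP_x$ with $x$ minimal, enumerates the basic open sets $V_n$ of $Y$ meeting $[\varphi(x_G)]$, finds conditions $U_n$ forcing $\gamma_n \cdot \xd_G \in \varphi^{-1}(V_n)$, and uses minimality of $x_G$ together with genericity of the translates $t\cdot x_G$ to verify recurrence of $\varphi(x_G)$, finishing with an absoluteness transfer. Your approach unwinds this into a pure Baire-category argument: restrict to the compact minimal flow $\overline{[x_0]}$, pass to a $\Gamma$-invariant dense $G_\delta$ set $X'$ on which $\varphi$ is continuous (the category-theoretic analogue of genericity), pick any $x\in X'$, and transfer the recurrence of $x$ (Lemma~\ref{recurrent}) to $\varphi(x)$ via the relative continuity of $\varphi$ on $X'$. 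The identity $[\varphi(x)]=\varphi([x])$, the invariance of $X'$, and the passage from a relatively open $U\subseteq X'$ to an open $\widetilde U\subseteq X$ with $\widetilde U\cap X'=U$ --- all of which you use correctly --- are precisely what make the transfer go through and correspond to the paper's ``$t\cdot x_G$ is also generic'' step. The two proofs are morally equivalent (the paper itself observes that $\bbP_x$ is just Cohen forcing on $\overline{[x]}$), but yours avoids the metamathematical apparatus entirely and also does not need the strong Choquet game that the paper invokes for its topological proof of Theorem~\ref{dtm}, since here one dense $G_\delta$ suffices rather than an iterated construction. Your closing remark that the orbit-forcing route would work identically is also accurate and is, in fact, the paper's actual proof.
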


\begin{proof}
 
Let $x\in X$ be minimal, and let $\bbP=\bbP_x$ and $\bbP'$ be as in
the proof of Theorem~\ref{dtm} (along with $M$, $\pi$, etc., with
codes for $X, Y$ and $\varphi$ in $M$).  Let $G$ be $\bbP'_x$-generic
over $M$.  Note that $x_G \in \ocl{[x]}$, and so $x_G$ is also a
minimal point. We claim that $\varphi(x_G)$ is recurrent.  Let
$V_0,V_1,\dots$ enumerate the basic open sets of $Y$ which intersect
$[\varphi(x_G)]$.  Let $S_n=\varphi^{-1}(V_n)$, so $S_n$ is a Borel
subset of $X$.  For each $n$, let $\gamma_n \in \Gamma$ and $U_n \in
\bbP'_x$ be such that $U_n \forces ( \gamma_n \cdot \xd_G \in S_n)$.
Since $x_G$ is in the open set $U_n$, and since $x_G$ is minimal, it
follows that there is a finite $T_n \subseteq \Gamma$ such that for
any $y =\gamma\cdot x_G \in [x_G]$ there is a $t \in T_n$ such that $t
\cdot y \in U_n$. Since $t \cdot y$ is also generic, if follows that
$\gamma_n \cdot (t \cdot y) \in S_n$. By equivariance of $\varphi$ we
have $(\gamma_nt\gamma)\cdot \varphi(x_G) \in V_n$. So, for all
$\gamma \in \Gamma$ there exists $h \in \gamma_nT_n$ such that $h\cdot
(\gamma\cdot \varphi(x_G)) \in V_n$.  This shows that $\varphi(x_G)$
is recurrent.
\end{proof}

\begin{cor}
For any countable group $\Gamma$, any Borel action $\Gamma\acts Y$ of
$\Gamma$ on a Polish space $Y$, and any Borel equivariant map
$\varphi\colon F(2^{\Gamma}) \to Y$, there is an $x \in F(2^{\Gamma})$
such that $\varphi(x)$ is recurrent.
\end{cor}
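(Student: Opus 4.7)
The plan is to deduce the corollary directly from Theorem~\ref{recurrentfactor} by exhibiting a compact invariant subspace of $F(2^\Gamma)$ and restricting the given factor map to it.

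First, I would invoke the existence of a hyperaperiodic point (equivalently, a $2$-coloring) $z \in 2^\Gamma$. This is cited in the excerpt as a consequence of results in \cite{GJS1, GJS2}, where it is also noted that minimal $2$-colorings exist on every countable group. By the characterization of hyperaperiodicity, the closure $X := \overline{[z]} \subseteq 2^\Gamma$ is contained entirely within the free part $F(2^\Gamma)$. Moreover, $X$ is $\Gamma$-invariant (being the closure of an orbit) and compact (being a closed subspace of the compact space $2^\Gamma$). The Bernoulli shift restricted to $X$ is a continuous action of $\Gamma$ on a compact Polish space.

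Next, I would restrict the given Borel equivariant map $\varphi \colon F(2^\Gamma) \to Y$ to $X$. Since $X \subseteq F(2^\Gamma)$ and $X$ is Borel, the restriction $\varphi \restriction X \colon X \to Y$ is a Borel equivariant map from the compact Polish $\Gamma$-space $X$ into the Polish $\Gamma$-space $Y$. Now all hypotheses of Theorem~\ref{recurrentfactor} are satisfied, so there exists $x \in X$ such that $\varphi(x) = (\varphi \restriction X)(x)$ is a recurrent point of $Y$. Since $X \subseteq F(2^\Gamma)$, this $x$ lies in $F(2^\Gamma)$, completing the proof.

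There is essentially no obstacle here beyond recognizing that $F(2^\Gamma)$ is not itself compact (so Theorem~\ref{recurrentfactor} does not apply directly to the domain $F(2^\Gamma)$), and that the fix is to pass to the closure of a hyperaperiodic orbit. The only external input required is the nontrivial fact that hyperaperiodic elements of $2^\Gamma$ exist for every countable group $\Gamma$, which the paper has already cited.
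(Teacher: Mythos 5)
Your proof is correct and matches the paper's own argument: take a compact invariant subset $X \subseteq F(2^\Gamma)$ (which the paper also obtains from \cite{GJS1,GJS2}) and apply Theorem~\ref{recurrentfactor} to the restriction of $\varphi$ to $X$. You simply spell out that $X$ arises as the orbit closure of a hyperaperiodic point, which is the same construction the paper relies on implicitly.
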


\begin{proof}
By \cite{GJS1, GJS2}, there is an invariant compact set $X \subseteq
F(2^{\Gamma})$. Now apply Theorem~\ref{recurrentfactor} to $X$.
\end{proof}

\begin{cor}\label{taurecurrent}
Let $\tau$ be a Polish topology on $F(2^\Gamma)$ having the same Borel
sets as the standard topology. Then there is a $\tau$-recurrent point.
\end{cor}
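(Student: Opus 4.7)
The plan is to derive this as a direct application of Theorem \ref{recurrentfactor}, using the identity map on $F(2^\Gamma)$ viewed as a map into the same set but equipped with $\tau$ rather than the standard topology. First I would invoke \cite{GJS1, GJS2} to fix a compact $\Gamma$-invariant subset $X \subseteq F(2^\Gamma)$ in the standard topology; since the $\Gamma$-action is continuous, this puts us in the setting of Theorem~\ref{recurrentfactor} with the same group action on $X$.

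Next I would let $Y$ denote the Polish space $(F(2^\Gamma), \tau)$ and take $\varphi \colon X \to Y$ to be the set-theoretic identity inclusion. Equivariance is immediate. To see that $\varphi$ is Borel, note that for any $\tau$-Borel set $B \subseteq F(2^\Gamma)$, $\varphi^{-1}(B) = B \cap X$, and since by hypothesis the $\tau$-Borel sets coincide with the standard Borel sets, this is Borel in $X$. Then Theorem~\ref{recurrentfactor} produces an $x \in X$ such that $\varphi(x) = x$ is a recurrent point of $Y$, which is exactly the statement that $x$ is $\tau$-recurrent.

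The main thing to check carefully is that $\Gamma \acts Y$ is a Borel action, so that the hypotheses of Theorem~\ref{recurrentfactor} are genuinely met. For each fixed $\gamma \in \Gamma$, the translation map $y \mapsto \gamma \cdot y$ is a homeomorphism of $F(2^\Gamma)$ in the standard topology, hence is standard-Borel measurable; but since the standard and $\tau$ Borel $\sigma$-algebras agree, each such translation is also Borel as a self-map of $Y$. Borelness of the global action map $\Gamma \times Y \to Y$ then follows by writing its graph as a countable union over $\gamma \in \Gamma$. Once this observation is in place, the argument is essentially a formal reduction, with the work having already been done in Theorem~\ref{recurrentfactor}.
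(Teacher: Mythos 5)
Your proof is correct and is essentially the argument the paper has in mind: the corollary is stated immediately after the unnamed corollary of Theorem~\ref{recurrentfactor}, and the intended derivation is exactly what you wrote, namely take $\varphi$ to be the identity into $(F(2^\Gamma),\tau)$, restricted to a compact invariant subset $X$. Your extra verification that $\Gamma \acts (F(2^\Gamma),\tau)$ is a Borel action is a reasonable point to spell out (and is correct, since the Borel $\sigma$-algebras coincide and $\Gamma$ is countable), but it does not change the route.
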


\begin{cor}\label{meetorbitrecurrent}
If $B \subseteq F(2^\Gamma)$ is a Borel complete section then $B$
meets some orbit recurrently, i.e., there is $x\in F(2^\Gamma)$ and
finite $T\subseteq \Gamma$ such that for any $y\in[x]$, $T\cdot y\cap
B\neq\varnothing$.
\end{cor}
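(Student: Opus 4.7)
The plan is to reduce the statement to Corollary \ref{taurecurrent} by refining the topology on $F(2^\Gamma)$ so that the complete section $B$ becomes open. A standard fact from descriptive set theory says that given any Polish topology on a standard Borel space and any Borel set $B$, one can refine the topology to a finer Polish topology $\tau$ having the same Borel sets in which $B$ is clopen. Since the Borel structure is preserved, the action $\Gamma \acts F(2^\Gamma)$ remains Borel in $\tau$, and $B$ remains a complete section for the orbit equivalence relation.

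Now I would invoke Corollary \ref{taurecurrent} applied to $\tau$ to obtain a point $x \in F(2^\Gamma)$ which is $\tau$-recurrent. Since $B$ is a complete section, $B \cap [x] \neq \varnothing$, and since $B$ is $\tau$-open, the definition of recurrence directly produces a finite $T \subseteq \Gamma$ such that $T \cdot y \cap B \neq \varnothing$ for every $y \in [x]$, which is exactly the desired conclusion.

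The only thing to verify carefully is that Corollary \ref{taurecurrent} genuinely applies in this setting: it was stated for a Polish refinement of the standard topology on $F(2^\Gamma)$, and its proof (implicit in the discussion) goes through Theorem \ref{recurrentfactor} by taking $X$ to be a compact invariant subset of $F(2^\Gamma)$ in the standard topology (furnished by \cite{GJS1,GJS2}), setting $Y = F(2^\Gamma)$ equipped with $\tau$, and letting $\varphi \colon X \to Y$ be the identity inclusion, which is Borel (hence Borel equivariant) because the two topologies share the same Borel sets. The main potential obstacle is confirming that the identity remains Borel equivariant after refinement and that the hypotheses of Theorem \ref{recurrentfactor} (continuous action on the compact space $X$, Borel action on $Y$) are intact; both hold automatically because we only refine the topology on the target $Y$, not on the compact source $X$.
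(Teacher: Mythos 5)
Your proof is correct and follows essentially the same route as the paper: refine the topology so that $B$ becomes open while preserving the Borel structure, then apply Corollary~\ref{taurecurrent}. You have merely filled in the details that the paper leaves implicit (in particular, how Corollary~\ref{taurecurrent} itself reduces to Theorem~\ref{recurrentfactor} via the identity map into the retopologized space), and this unpacking is accurate.
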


\begin{proof}
Let $\tau$ be a Polish topology on $F(2^\Gamma)$ with $B\in \tau$ and
having the same Borel sets as the standard topology (cf., e.g.,
\cite[\S 4.2]{Gao}). Apply Corollary~\ref{taurecurrent}.
\end{proof}

Corollary~\ref{meetorbitrecurrent} rules out the existence of Borel
complete sections with certain geometric properties. The following is
an example.

\begin{cor} \label{cor:nn}
There does not exist $B \subseteq F(2^{\Z^2})$ with the following
properties:
\begin{enumerate}
\item[\rm (i)] Both $B$ and $F(2^{\Z^2})\setminus B$ are Borel
  complete sections, and
\item[\rm (ii)] For any $x\in B$ and $(\gamma, \eta)\in\N^2$,
  $(\gamma,\eta) \cdot x \in B$.
\end{enumerate}
\end{cor}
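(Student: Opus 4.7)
My plan is to derive a contradiction by applying Corollary~\ref{meetorbitrecurrent} directly to $B$ itself and then exploiting the $\N^2$-invariance (ii) to saturate an entire orbit. Suppose for contradiction that such a $B$ exists. Since $B$ is a Borel complete section of $F(2^{\Z^2})$, Corollary~\ref{meetorbitrecurrent} produces some $x \in F(2^{\Z^2})$ and a finite $T \subseteq \Z^2$ such that $T \cdot y \cap B \neq \varnothing$ for every $y \in [x]$. Choose $N$ with $T \subseteq [-N,N]^2$, and introduce the trace $B_x = \{g \in \Z^2 : g \cdot x \in B\}$.

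The first thing I would record is that clause (ii) forces $B_x$ to be upward-closed in the componentwise partial order on $\Z^2$: if $g \in B_x$ and $(\gamma,\eta) \in \N^2$, then $(g+(\gamma,\eta)) \cdot x = (\gamma,\eta) \cdot (g \cdot x) \in B$, so $g + (\gamma,\eta) \in B_x$. The recurrence property now reads: for every $g \in \Z^2$ there is $t \in T$ with $g + t \in B_x$.

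The main step combines these two facts. For any $m \geq 0$, I would apply recurrence at $g = (-m-N, -m-N)$ to obtain some $t \in T$ with $g + t \in B_x$; since each coordinate of $t$ lies in $[-N,N]$, each coordinate of $g + t$ is at most $-m$. Upward-closedness then yields $(-m,-m) + \N^2 \subseteq B_x$. As $m$ is arbitrary, the union of these translated quadrants exhausts $\Z^2$, so $B_x = \Z^2$, that is, $[x] \subseteq B$. This contradicts clause (i), which forces $[x] \cap (F(2^{\Z^2}) \setminus B) \neq \varnothing$. I do not anticipate a real obstacle here; the only point worth flagging is that once (ii) is in play, recurrence for $B$ alone is enough to propagate from a single hit to an entire cone, so one never needs to invoke recurrence for the complement.
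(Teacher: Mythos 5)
Your proof is correct and uses essentially the same ingredients as the paper: Corollary~\ref{meetorbitrecurrent} applied to $B$ together with the $\N^2$-monotonicity from (ii). The paper runs the argument in the dual direction---it picks $z \in [x]\setminus B$, notes that the downward quadrant rooted at $z$ avoids $B$, and then pushes a point $y$ deep into that quadrant to violate recurrence---whereas you show that recurrence plus upward-closedness absorb the whole orbit into $B$ and then contradict (i); the combinatorial content is the same.
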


\begin{proof}
Assume toward a contradiction that $B$ satisfies both (i) and (ii). By
Corollary~\ref{meetorbitrecurrent} there is $x\in F(2^{\Z^2})$ and
finite $T\subseteq \Z^2$ such that for any $y\in[x]$, $T\cdot y\cap
B\neq\varnothing$. Let $z\in [x]\setminus B$. Such a $z$ exists since
$F(2^{\Z^2})\setminus B$ is also a complete section. By (ii), for any
$(\gamma, \eta)\in \N^2$, $(-\gamma, -\eta)\cdot z\not\in B$. Now let
$n>\|t\|$ for all $t\in T$, and consider $y=(-n, -n)\cdot z$. The
property of $z$ implies that $T\cdot y\cap B=\varnothing$, a
contradiction.
\end{proof}

We note that it is also possible to give a non-forcing proof
of  Corollary~\ref{cor:nn}. Namely, if such a $B$ existed,
then the subequivalence relation of $F(2^{\Z^2})$
generated by the group element $g=(1,1)$ would be smooth 
(that is, have a Borel selector). However, a simple category argument shows that 
there cannot be such a Borel selector for this relation.

In all forcing arguments in the rest of this paper we will skip the
metamathematical details we presented in the proofs of
Theorems~\ref{dtm} and \ref{recurrentfactor}. Specifically, instead of
using the countable elementary substructure $M$ and the forcing
version $\bbP'$ in $M$ (in the case of Cohen forcing we in fact have
$\bbP'=\bbP$), we will just use $\bbP$ and pretend that we can find a
generic $x$ for $\bbP$ over $V$. In reality, we should take $x$ to be
$M$-generic for $\bbP'$ and then use absoluteness between $M[x]$ and
$V$ as we did in Theorem~\ref{dtm}.  Since these details do not vary
in the arguments, we shall henceforth omit them.

\section{Borel Layered Toast} \label{sec:toast}

In this short section we present another application of
Corollary~\ref{cor:dtm} on the non-existence of certain types of {\it
  strong marker structure} on $F(2^{\Z^d})$. The name ``toast'' for
the type of structure defined below was coined by B.\ Miller. We will define two
versions of this notion, the general or ``unlayered'' toast structure,
and the more restrictive notion of ``layered'' toast.  These are both
strong types of marker structures to impose on the orbits of
$F(2^{\Z^d})$. We can consider both the Borel as well as the clopen
versions of these notions, which leads to four separate questions
concerning the existence of these structures. It turns out that a
Borel unlayered toast structure does exist, but the answers are no
for all the other existence questions.  We present the proof for the
nonexistence of Borel layered toast here; the other results will be
presented in a forthcoming paper.

We note that the notion of toast arose naturally through its connections
with interesting problems in Borel combinatorics.
For example, the existence of Borel unlayered toast, which
will be proved in an upcoming paper, gives a proof that there is a Borel
chromatic $3$-coloring of $F(2^{\Z^d})$ (and so $F(2^{\Z^d})$ has Borel
chromatic number $3$). These toast structures have been constructed modulo
meager sets and modulo $\mu$-null sets by C.\ Conley and B.\ Miller and used
to bound the Baire-measurable and $\mu$-measurable chromatic numbers
of many Borel graphs \cite{CMi}.

First we make precise the notion of a toast marker structure.

\begin{defn} \label{def:toast}
Let $\{T_n\}$ be a sequence of subequivalence relations of $E_{\Z^d}$
on some subsets $\dom(T_n)\subseteq F(2^{\Z^d})$ with each
$T_n$-equivalence class finite. Assume $\bigcup_n \dom(T_n)=
F(2^{\Z^d})$. We say $\{ T_n\}$ is a {\em (unlayered) toast} if:
\begin{enumerate}
\item[(1)] \label{toast1} For each $T_n$-equivalence class $C$, and
  each $T_m$-equivalence class $C'$ where $m>n$, if $C \cap C' \neq
  \varnothing$ then $C \subseteq C'$.

\item[(2)] \label{toast2} For each $T_n$-equivalence class $C$ there
  is $m>n$ and a $T_m$-equivalence class $C'$ such that $C \subseteq
  C'\setminus \partial C'$.
\end{enumerate}

We say $\{ T_n\}$ is a {\em layered toast} if, instead of (2) above,
we have
\begin{enumerate}
\item[(2')] For each $T_n$-equivalence class $C$ there is a
  $T_{n+1}$-equivalence class $C'$ such that $C \subseteq C'\setminus
  \partial C'$.
\end{enumerate}

\end{defn}

Figure~\ref{fig:toast} illustrates the definitions of layered and 
unlayered toast.

\begin{figure}[ht]
\centering
\begin{tikzpicture}[scale=0.035]

\draw (-150,-80) rectangle (-10,80);
\draw (10,-80) rectangle (150,80);

\draw (-80, 40) ellipse [x radius=60, y radius=30];
\draw (-80, -40) ellipse [x radius=60, y radius=30];
\draw[rotate around={45:(-110,40)}]   (-110,40) ellipse [x radius=20, y radius=10];
\draw[rotate around={45:(-80,40)}]   (-80,40) ellipse [x radius=20, y radius=10];
\draw[rotate around={45:(-50,40)}]   (-50,40) ellipse [x radius=20, y radius=10];
\draw (-100,50) circle (4);
\draw (-105,40) circle (4);
\draw (-117,30) circle (4);
\draw (-90,30) circle (4);
\draw (-75,40) circle (4);
\draw (-50,40) circle (4);
\draw[rotate around={45:(-105,-40)}]   (-105,-40) ellipse [x radius=20, y radius=10];
\draw[rotate around={45:(-55,-40)}]   (-55,-40) ellipse [x radius=20, y radius=10];
\draw (-110,-45) circle (4);
\draw (-98,-33) circle (4);
\draw (-63,-50) circle (4);
\draw (-60,-40) circle (4);
\draw (-45,-30) circle (4);

\draw (80, 40) ellipse [x radius=60, y radius=30];
\draw (80, -40) ellipse [x radius=60, y radius=30];
\draw[rotate around={45:(110,40)}]   (110,40) ellipse [x radius=20, y radius=10];
\draw[rotate around={45:(50,40)}]   (50,40) ellipse [x radius=20, y radius=10];
\draw (100,30) circle (4);
\draw (115,45) circle (4);
\draw (45,35) circle (4);
\draw (75,60) circle (4);
\draw (85,40) circle (4);
\draw (70,20) circle (4);
\draw[rotate around={45:(55,-40)}]   (55,-40) ellipse [x radius=20, y radius=10];
\draw (60,-35) circle (4);
\draw (74,-55) circle (4);
\draw (105,-45) circle (4);
\draw (100,-30) circle (4);

\end{tikzpicture}
\caption{(a) layered toast \hspace{50pt} (b) general toast} \label{fig:toast}
\end{figure}

\begin{thm}
There is no Borel layered toast on $F(2^{\Z^d})$.
\end{thm}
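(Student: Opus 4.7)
Assume for contradiction that $\{T_n\}$ is a Borel layered toast on $F(2^{\Z^d})$. The plan is to extract from $\{T_n\}$ Borel complete sections $S_n$ whose distance from any fixed orbit point grows linearly in $n$, contradicting Corollary~\ref{cor:dtm} with a sub-linear gauge $f$.

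First I would set
$$S_n := \partial T_n \cup (F(2^{\Z^d}) \setminus \dom(T_n)),$$
where $\partial T_n = \bigcup_C \partial C$ ranges over all $T_n$-equivalence classes $C$. Each $S_n$ is Borel and is a complete section: if $x \notin \dom(T_n)$ then $x \in S_n$, while if $x \in \dom(T_n)$ then the finite set $[x]_{T_n}$ is a proper subset of the infinite orbit $[x]_{E_{\Z^d}}$, so the orbit must enter $\partial [x]_{T_n} \subseteq \partial T_n$.

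Next I would exploit the layering condition (2'). The statement $C \subseteq C' \setminus \partial C'$ is equivalent to $N_1(C) \subseteq C'$, where $N_k(A)$ denotes the closed $k$-neighborhood in the Cayley metric $\rho$; iterating along the chain $C_n \subseteq C_{n+1} \subseteq \cdots$ of classes containing a given point yields $N_k(C_n) \subseteq C_{n+k}$. In particular $\dom(T_n) \subseteq \dom(T_{n+1})$, so for each $x \in F(2^{\Z^d})$ the value $N(x) := \min\{n : x \in \dom(T_n)\}$ is well defined. Writing $C_m = [x]_{T_m}$ for $m \geq N(x)$, the inclusion $N_{m-N(x)-1}(x) \subseteq C_{m-1} \subseteq C_m \setminus \partial C_m$ gives $\rho(x, \partial C_m) \geq m - N(x)$, while $N_{m-N(x)}(x) \subseteq C_m$ forces the distance from $x$ to $F(2^{\Z^d}) \setminus \dom(T_m)$ and to $\partial C'_m$ for any other $T_m$-class $C'_m \neq C_m$ to strictly exceed $m - N(x)$. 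Taking the minimum over the pieces of $S_m$ yields $\rho(x, S_m) \geq m - N(x)$ for all $m > N(x)$.

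Finally, I would apply Corollary~\ref{cor:dtm} with the gauge $f(n) = \lfloor \sqrt{n} \rfloor$, which satisfies $\limsup_n f(n) = +\infty$. The corollary produces some $x \in F(2^{\Z^d})$ with $\rho(x, S_n) < f(n)$ for infinitely many $n$; but the estimate above forces $\rho(x, S_n) \geq n - N(x) > \sqrt{n} = f(n)$ for all sufficiently large $n$, a contradiction. The main technical point---and the place where \emph{layered} is essential rather than merely unlayered---is the iterated inclusion $N_k(C_n) \subseteq C_{n+k}$: without the telescoping buffers supplied by (2'), one only obtains a single buffer step around each $C_n$, the distance $\rho(x, \partial T_n)$ need not grow with $n$, and the argument collapses.
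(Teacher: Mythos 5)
Your proof is correct and follows essentially the same strategy as the paper: use (2') to show $\rho(x, \partial T_n)$ grows at least linearly past the stage where $x$ enters the toast, then contradict Corollary~\ref{cor:dtm} with the sub-linear gauge $f(n)=\lfloor\sqrt n\rfloor$. The only (minor, favorable) difference is that you augment the boundaries by $F(2^{\Z^d})\setminus\dom(T_n)$ to make each $S_n$ a genuine Borel complete section; the paper works with $\partial T_n$ alone, which need not be a complete section (an orbit can miss $\dom(T_n)$ for small $n$), and so implicitly relies on the weakened hypothesis of Remark~\ref{rem:dtm}(2) -- your formulation avoids that appeal.
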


\begin{proof}
Suppose $\{T_n\}$ was a sequence of Borel subequivalence relations of
$E_{\Z^d}$ on some subsets $\dom(T_n)\subseteq F(2^{\Z^d})$ forming a
layered toast structure on $F(2^{\Z^d})$. For each $n$ let $\partial
T_n$ be the the union of all boundaries of the $T_n$-equivalence
classes. For $x \in F(2^{\Z^d})$, let $f_x \colon \N \to \N$ be
defined by $f_x(n)=\rho (x, \partial T_n)$ if $x \in \dom(T_n)$ and
$f_x(n)=0$ otherwise.  This is well-defined as each $T_n$-equivalence
class is finite. Since $\{T_n\}$ is a layered toast, by (2') we have
$\dom(T_n)\subseteq \dom(T_{n+1})$ for all $n$.  For $x \in
F(2^{\Z^d})$, let $n_0$ be large enough so that $x \in
\dom(T_{n_0})$. We claim that for $n \geq n_0$ that
$f_x(n)<f_x(n+1)$. To see this, let $n \geq n_0$, and let $a=f_x(n)$.
Let $g \in \Z^n$ with $\| g \| \leq a$.  It follows easily from the
definitions of $a$ and $\partial T_n$ that $g \cdot x$ is
$T_n$-equivalent to $x$ (if we choose a path $p$ from $\vec{0}$ to $g$
of length $a$, then by an easy induction along the path we have that
$g' \cdot x$ is $T_n$-equivalent to $x$ for all $g'$ in $p$).  So,
from property~(2') we have that $g \cdot x \notin \partial
T_{n+1}$. Thus, $\rho(x,\partial T_{n+1})>a$.  So, for all $x \in
F(2^{\Z^d})$ and all sufficiently large $n$ (which may depend on $x$)
we have $f_x(n)<f_x(n+1)$.

If we let $f\colon \N \to \N$ be the function $f(n)=\sqrt{n}$, then
for all $x \in F(2^{\Z^n})$ we have that for all but finitely many $n
\in \N$ that $\rho(x, \partial T_n) > f(n)$. This violates
Corollary~\ref{cor:dtm}.
\end{proof}

\section{Cohen Forcing and Bounded Geometry of Marker Regions}

In this section we use forcing to prove a nonexistence theorem for
marker regions in $F(2^{\Z^2})$ that are of regular shape. A version
of this theorem was stated without proof as Theorem~3.5 of
\cite{GaoJackson}. We will have to recall a good amount of terminology
and results from \cite{GaoJackson}. But the forcing used is going to
be just Cohen forcing on a countable group $\Gamma$.

Given a countable group $\Gamma$ and $k\leq \omega$, the Cohen forcing
$\bbP_\Gamma(k)$ is defined by
$$ \bbP_\Gamma(k)=\{ p\in k^{\dom(p)}\,:\,
\mbox{$\dom(p)\subseteq\Gamma$ is finite}\} $$ with the order of
inverse inclusion, that is, $p\leq q$ iff $p\supseteq q$.

If $G$ is $\bbP_\Gamma(k)$-generic over $V$, and $x_G=\bigcup G$, then
$x_G\in F(k^\Gamma)$. This is because, for any $\gamma\in\Gamma$, the
set
$$D_\gamma=\{ p\in \bbP_\Gamma(k)\,:\, \exists g\in
\dom(p)\cap\gamma^{-1}\cdot\dom(p)\ [\,p(g)\neq p(\gamma\cdot
  g)\,]\} $$ is dense in $\bbP_\Gamma(k)$. Thus the generic real is
always an aperiodic element of $k^\Gamma$.

We recall some facts about the orthogonal marker construction of
\cite{GaoJackson}. The construction was done on $F(2^{\Z^d})$ for any
$d\geq 1$. Here, we focus on $d=2$ for simplicity. We continue to use
$E_{\Z^2}$ to denote the orbit equivalence relation on $F(2^{\Z^2})$
given by the Bernoulli shift action of $\Z^2$. By a {\it finite
  subequivalence relation} on $F(2^{\Z^2})$ we mean an equivalence
relation $\mathcal{R}\subseteq E_{\Z^2}$ with all equivalence classes
finite.  If $R$ is an equivalence class of $\mathcal{R}$ and $x\in R$,
then we consider the finite subset of $\Z^2$ defined by
$$ S_x=\{(\gamma,\eta)\in\Z^2\,\:\, (\gamma,\eta)\cdot x\in R\}. $$ We
can speak of the {\it shape} of $S_x$, e.g., $S_x$ is a {\it
  rectangle} if it is of the form $[a,b]\times [c,d]\subseteq
\Z^2$. It is obvious that the shape of $S_x$ does not depend on the
choice of $x$, since these sets are translates of each other with
different choices of reference points. Thus we often abuse the
terminology and just speak of the {\it shape} of an equivalence class
$R$.

In the orthogonal marker construction one produces a sequence $\sR_n$
of relatively clopen finite subequivalence relations on
$F(2^{\Z^2})$. Here the term {\it relatively clopen} means that for
every $g \in \Z^2$, the set of $x \in F(2^{\Z^2})$ with $x \sR_n g \cdot x$
is clopen. The $\sR_n$ equivalence classes are also called {\it
  marker regions}. There is a {\it scale} $d_n \in \N$ associated to
each $\sR_n$, in the sense that each equivalence class $R$ of $\sR_n$
restricted to some orbit $[x]$ is {\it roughly a rectangle on the
  scale} $d_n$, that is, there is a rectangle $R'$ such that the
$\rho$-Hausdorff distance between $R$ and $R'$ is less than $\alpha
d_{n-1}$ for some fixed constant $0<\alpha<1$. Here we assume that
$d_{n-1}\ll d_n$ for all $n$.  In the construction of
\cite{GaoJackson}, each $R\in \sR_n$ is obtained from a true rectangle
$R'$ by modifying the boundary in $n-1$ stages. At stage $k$ the
adjustments are on the order of scale $d_k$.  Thus, the boundaries of
the $R\in \sR_n$ become increasingly fractal-like as $n$ increases.
The key property the marker regions of $\sR_n$ have is that for any $x
\in F(2^{\Z^2})$, we have that $\rho(x, \partial \sR_n) \to \infty$,
where $\partial \sR_n$ denotes the union of the boundaries of the
regions $R$ in $\sR_n$. This construction, which results in marker
regions with the above {\em vanishing boundary property}, is the main
ingredient of the hyperfiniteness proof of \cite{GaoJackson}.

Theorem~\ref{dtm} gives us some additional information about this
construction. Namely, for $x \in F(2^{\Z^2})$, let $\varphi_x \colon
\N \to \N$ be given by $\varphi_x(n)=\rho(x, \partial \sR_n)$. The
orthogonal marker construction says that each $\varphi_x$ tends to
infinity with $n$ whereas Theorem~\ref{dtm} says that the growth rate
of the $\varphi_x$ can be arbitrarily slow.  More precisely, given any
$f \colon \N \to \N$ with $\limsup_n f(n)=+\infty$, there is an $x \in
F(2^{\Z^2})$ such that $\varphi_x(n)<f(n)$ for infinitely many
$n$. Thus, we cannot improve the orthogonal marker theorem by
prescribing a growth rate for the functions $\varphi_x$, not even if
we seek Borel, instead of clopen, finite subequivalence relations.

It is natural also to ask whether the fractal-like nature of the
$\sR_n$ is also necessary.  Could we have a sequence $\sR_n$ of Borel
finite subequivalence relations, with vanishing boundary, where the
regions $R\in \sR_n$ have a regular geometry? For instance, can we
have all marker regions in $\sR_n$ to be rectangles with 
edge lengths between $v(n)$ and $w(n)$ where $\lim_n v(n)=\infty$. 
The next result shows that
this potential improvement to the orthogonal marker construction is
also impossible. For simplicity we state the result only for
rectangles, but the proof works for reasonably regular polygons.

\begin{thm} \label{thm:nrb}
There does not exist a sequence $\sR_n$ of Borel finite subequivalence
relations on $F(2^{\Z^2})$ satisfying all the following:

\begin{enumerate}
\item[\rm (1)] {\em (regular shape)} For each $n$, each marker region
$R$ of $\sR_n$ is a rectangle.

\item[\rm (2)] {\em (bounded size)} \label{nrb2} 
For each $n$, there is an upper bound $w(n)$ on the size
of the edge lengths of the marker regions $R$ in $\sR_n$.

\item[\rm (3)] {\em (increasing size)} Letting $v(n)$ denote the
smallest edge length of a marker region $R$ of $\sR_n$, we have $\lim_n
v(n)=+\infty$.

\item[\rm (4)] {\rm (vanishing boundary)} \label{nrb4} For each $x \in
F(2^{\Z^2})$ we have that $\lim_n \rho(x, \partial \sR_n)=+\infty$.
\end{enumerate}
\end{thm}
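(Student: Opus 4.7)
The plan is to argue by contradiction using Cohen forcing $\bbP = \bbP_{\Z^2}(2)$. Suppose $(\sR_n)$ satisfies (1)--(4). For each $n$, let $B_n$ denote the Borel set of $x \in F(2^{\Z^2})$ such that neither $(-1,0) \cdot x$ nor $(0,-1) \cdot x$ is $\sR_n$-equivalent to $x$; by (1), $B_n$ is the set of ``bottom-left corners'' of $\sR_n$-rectangles, so $B_n$ is a Borel complete section with $B_n \subseteq \partial \sR_n$. It suffices to exhibit a single $x \in F(2^{\Z^2})$ with $x \in B_n$ for infinitely many $n$: for such $x$, $\rho(x, \partial \sR_n) = 0$ infinitely often, directly contradicting~(4).

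Such an $x$ will be produced via a Cohen-generic construction. By absoluteness, $\sR_n$ and conditions (1)--(4) transfer to the forcing extension $V[G]$. If in $V[G]$ the generic $\xd_G$ satisfies $\xd_G \in B_n$ for infinitely many $n$, then the $\bs^1_1$ statement ``$\exists x\,\forall k\,\exists n \ge k\ (x \in B_n)$'' holds in $V[G]$, hence in $V$ by $\bs^1_1$-absoluteness, yielding the desired contradiction. The key claim is the density assertion: for every $p \in \bbP$ and every $n_0$, there exist $n \ge n_0$ and $q \le p$ with $q \forces \xd_G \in B_n$.

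To prove density, first pass to an invariant dense $G_\delta$ subspace $X' \subseteq F(2^{\Z^2})$ on which each $\sR_n$ is relatively clopen (via the Baire property of the Borel relations). On $X'$, the $\sR_n$-rectangle of any $x$ is determined by a finite-radius neighborhood of $x$'s bit pattern. Given $p$ with $\dom(p) \subseteq [-r, r]^2$, choose $n \ge n_0$ large enough that $v(n) > 3r$; then the $\sR_n$-rectangle around any completion of $p$ extends well beyond $\dom(p)$. Extend $p$ to $q$ by specifying additional bits on a sufficiently large surrounding square, chosen to force the $\sR_n$-combinatorics to place $\xd_G$ at the bottom-left corner of its $\sR_n$-rectangle.

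The main obstacle is showing that such an extension $q$ actually exists, i.e., that the Borel relation $\sR_n$, restricted to completions of $p$ in $X'$, realizes some admissible rectangle shape (with dimensions in $[v(n), w(n)]^2$) with $\xd_G$ at the bottom-left corner. Conditions (1)--(3), combined with the relatively clopen structure on $X'$, provide the needed flexibility: (1) restricts shapes to rectangles, (2) bounds the catalog of possible shapes, and (3) makes the rectangle dwarf $\dom(p)$ so that the bits of $p$ impose no effective constraint on the global rectangle structure. Making this rigorous will likely require either engineering explicit ``template'' hyperaperiodic patterns that realize each admissible rectangle with corner at a prescribed location, or an invariance/saturation argument showing that the completions of $p$ in $X'$ realize a sufficiently rich set of $\sR_n$-rectangle configurations.
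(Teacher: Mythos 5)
Your framework (Cohen forcing, density claim, $\bs^1_1$ absoluteness transfer) matches the paper's, and the reduction of (4) to the density statement is correct. But you have explicitly left unproven exactly the step that the paper's proof is designed to solve, and neither of your two suggested routes for filling it is likely to work as stated.

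The problem with "engineer template patterns that realize each admissible rectangle with corner at a prescribed location'' is that you do not get to choose where $\sR_n$ places its rectangles: $\sR_n$ is a fixed Borel function of $x$, and the rectangle containing the origin of the generic could be positioned arbitrarily depending on bits far from $\dom(p)$. Extending $p$ to a $q$ that "forces the combinatorics to place $\xd_G$ at the bottom-left corner'' requires knowing how those far-away bits determine the rectangle, which you have no control over. The relatively clopen structure on a dense $G_\delta$ tells you the rectangle is locally determined, but it does not let you steer the boundary to the origin. Your second suggestion, an invariance argument, is the right instinct, but it needs a specific mechanism, and that is what is missing.

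The paper's mechanism is a \emph{diagonal tiling}: with $\dom(p)=[-a,a]^2$, it lays down copies of $p$ in vertical strips, shifting each successive strip down by one, so that over any horizontal stretch of length greater than $2(2a+1)^2$, copies of $p$ appear at every possible vertical offset; in particular every such stretch passes through the \emph{center} of some copy of $p$. It then picks $n$ with $v(n)>2(2a+1)^2$ and a region $B$ of side $>3w(n)$ carrying this tiling, so some $\sR_n$-rectangle $R$ of the generic lies entirely in $B$. A horizontal edge of $R$ is longer than $2(2a+1)^2$, so it runs through the center of some translate $A'$ of $\dom(p)$ with $r\res A'=p$. One then translates the forcing condition by the automorphism carrying that center to the origin; translation invariance of $\sR_n$ turns ``boundary passes through the center of $A'$'' into ``boundary passes through $(0,0)$,'' while the translated condition still restricts to $p$ on $\dom(p)$. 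This is the saturation argument you gestured at, but the crucial point is that it never attempts to control where the boundary falls; instead it ensures that \emph{wherever} the boundary falls inside $B$, it hits a copy of $p$ in the right spot. Without this idea the density claim does not close, so as written your proposal has a genuine gap.
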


\begin{proof}
Assume $\sR_n$ were Borel finite subequivalence relations with all the
stated properties.  We view conditions $p$ for our Cohen forcing
$\bbP=\bbP_{\Z^2}(2)$ as being partial functions $p \colon [a,b]
\times [c,d] \to \{ 0,1\}$ for some rectangle $A=[a,b]\times [c,d]$ in
$\Z^2$.  We will produce an $x \in F(2^{\Z^2})$ which will lie on the
boundary of a rectangle $R$ of $\sR_n$ for infinitely many $n$. This
will contradict property~(4).  Given $p \in \bbP$ and $k \in \N$, we
will produce a $q \leq p$ and an $n > k$ such that $q \forces (\xd_G
\in \partial \sR_n)$. Any $\bbP$-generic $x$ will then be as desired.

So, fix $p \in \bbP$ and $k \in \N$. We may assume that
$A=\dom(p)=[-a,a]\times [a,a]$.  By (3) we may choose $n>k$ large
enough so that the minimum edge length $v(n)$ of any $R$ of $\sR_n$ is
greater than $2(2a+1)^2$. Let $w=w(n)$ be the maximum edge length of any
$R$ of $\sR_n $, which is well-defined by (2). Let $b> 3w$, and let
$B=[0,b]\times [0,b] \subseteq \Z^2$.  Let $r \in \bbP$ be the
condition with domain $B$ obtained by restricting to $B$ the following
function $r'$:


\begin{align*}
r'(i,j) &= p(i' -a, j'-a), \\
\intertext{where}
i' & = i \mod(2a+1) \\
j'&=j+\displaystyle\frac{i-i'}{2a+1}  \mod (2a+1).
\end{align*}

\noindent
The function $r'$ is obtained by tiling $\Z^2$ with copies of $p$ as
follows. First put a copy of $p$ at $[0,2a]\times [0,2a]$ and
vertically stack copies of $p$ to tile the column
$[0,2a]\times\Z$. Then, on the vertical column $[2a+1,4a+1] \times \Z$
immediately to the right we shift this stack down by one. We continue
this, so on the vertical stack which is $c$ columns to the right, that
is, on $[c(2a+1), c(2a+1)+2a] \times \Z$, we shift down by $c\!\!\mod
(2a+1)$. This is illustrated in Figure~\ref{fig:nrb}.

\begin{figure}[ht]
\centering
\begin{tikzpicture}[scale=0.05]
\pgfmathsetmacro{\s}{2};
\draw (-10,0) to (110,0);
\draw (0,-10) to (0,110);
\draw (0,0) rectangle (100,100);

\foreach \j in {0,...,4}
{
\foreach \i in {1,...,9} {\draw (10*\j, 10*\i-\j*\s) rectangle (10*\j+10,10*\i+10-\j*\s);}
\draw (10*\j,0) rectangle (10*\j+10,10-\j*\s);
\draw (10*\j,100-\j*\s) rectangle (10*\j+10,100);
}

\foreach \j in {0,...,4}
{
\foreach \i in {1,...,9} {\draw (50+10*\j, 10*\i-\j*\s) rectangle (50+10*\j+10,10*\i+10-\j*\s);}
\draw (50+10*\j,0) rectangle (50+10*\j+10,10-\j*\s);
\draw (50+10*\j,100-\j*\s) rectangle (50+10*\j+10,100);
}

\end{tikzpicture}
\caption{The construction of the condition $r$.} \label{fig:nrb}
\end{figure}
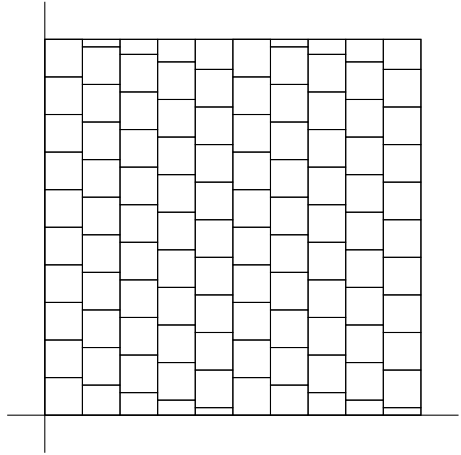

Let $x_G$ be a $\bbP$-generic real extending $r$.  Let $\sR_n(x_G)$
denote the set of rectangles in $\Z^2$ induced by the subequivalence
relation $\sR_n$ on the class $[x_G]$.  Since $b>3w$, there is a
rectangle $R$ in $\sR_n(x_G)$ lying entirely in $B=[0,b]\times
[0,b]$. Let $e$ be a horizontal edge of $R$. Then $e$ also lies
entirely in $B$. Since $e$ has length greater than $2(2a+1)^2$, the
offsets of the columns in the definition of $r$ gives that there is a
translate $A'=(i_0,j_0)+([-a,a] \times [-a,a])$ of $\dom(p)$ such that
$r \res A'= p$ and such that $e$ passes through the center point
$(i_0,j_0)$ of $A'$. By genericity, there is a condition $s \leq r$ in
$G$ such that
\[
s \forces (\exists R \in \sR_n(\xd_G) \ (i_0,j_0) \in \partial R )
\wedge (\xd_G \res A'=p)
\]

Let $\pi$ be the automorphism of $\bbP$ obtained by translating by
$(-i_0,-j_0)$.  Let $q=\pi(s)$. Then

\[
q \forces (\exists R \in \sR_n(\pi(\xd_G)) \ (i_0,j_0) \in \partial R)
\wedge (\pi(\xd_G) \res A'=p)
\]

\noindent
From the invariance of the $\sR_n$ this gives

\[
q\forces (\exists R \in \sR_n(\xd_G) \ (0,0) \in \partial R) \wedge
(\xd_G \res A =p)
\]

\noindent
So, $q\leq p$ and $q \forces \xd_G \in \partial \sR_n$.
\end{proof}

\section{Minimal \texorpdfstring{$2$}{2}-coloring Forcing \texorpdfstring{on $F(2^{\Z^d})$}{}}

For any countable group $\Gamma$, there is an $x \in F(2^\Gamma)$
which is a minimal $2$-coloring (cf. \cite{GJS1}, \cite{GJS2}). With
the corresponding orbit forcing $\bbP_x$, a generic real $x_G$
continues to be a minimal $2$-coloring in the generic extension
$V[G]$.  On the other hand, it is possible to directly define a
forcing notion which generically adds minimal $2$-colorings in
$2^\Gamma$. The advantage of this approach is that we will be able to
take advantage of some extra properties of the generic reals that are
not obviously available from a general $2$-coloring $x$ without other
features. In this section we will describe such a forcing notion,
which we call {\it minimal $2$-coloring forcing}, and some variations
of it, and use these forcing notions to prove some new results about
Borel complete sections.

It is possible to define the minimal $2$-coloring forcing for a
general group $\Gamma$, but such a definition is cumbersome to
describe, as it necessarily embodies the construction of a
$2$-coloring, which is not easy. For the case $\Gamma=\Z^d$, this
forcing has a simple and natural definition. Since our remaining
applications concern $F(2^{\Z^d})$, we will present the definition of
the minimal $2$-coloring forcing in this case. It will be clear that a
part of the definition is designed for producing a generic
$2$-coloring, and another part for producing a generic minimal
element. In applications, we sometimes need just the $2$-coloring
property of the generic real, sometimes we need just the minimality,
and frequently we need some additional properties which requires us to
modify the forcing.

We begin with a description of the basic forcing $\bbP_{mt}$ for
adding a minimal $2$-coloring in $2^{\Z^d}$. Since the definition is
essentially the same for all $\Z^d$ for $d\geq 1$, to ease notation we
consider the case $d=2$.

\begin{defn} \label{def:mt}
The {\em basic minimal $2$-coloring forcing} $\bbP_{mt}$ on $\Z^2$
consists of conditions
$$ \mathfrak{p}=(p,n,t_1,\dots, t_n,T_1,\dots, T_n, m, f_1,\dots, f_m,
F_1,\dots, F_m) $$ where
\begin{enumerate}
\item $p\in 2^{<\Z^2}$ with $\mbox{dom}(p)=[a, b]\times [c,d]$ for
  some $a<b$, $c<d$, $a, b,c,d\in \Z$;
\item $n, m\in \N$;
\item $t_1,\dots, t_n\in \Z^2-\{(0,0)\}$;
\item $f_1,\dots, f_m\in 2^{<\Z^2}$;
\item $T_1,\dots, T_n$, $F_1,\dots, F_m$ are finite subsets of $\Z^2$;
\end{enumerate}
such that the following conditions are satisfied:
\begin{enumerate}
\item[(a)] (2-coloring property) For any $1\leq i\leq n$ and
  $g\in\mbox{dom}(p)$ there is $\tau\in T_i$ such that $g+\tau,
  g+t_i+\tau\in\mbox{dom}(p)$ and $p(g+\tau)\neq p(g+t_i+\tau)$;
\item[(b1)] (minimality) For any $1\leq j\leq m$ and
  $g\in\mbox{dom}(p)$ there is $\sigma\in F_j$ such that
  $g+\sigma+\mbox{dom}(f_j)\subseteq\mbox{dom}(p)$ and for all $u\in
  \mbox{dom}(f_j)$, $p(g+\sigma+u)=f_j(u)$.
\item[(b2)] (minimality with flips) For any $1\leq j\leq m$ and
  $g\in\mbox{dom}(p)$ there is $\sigma\in F_j$ such that
  $g+\sigma+\mbox{dom}(f_j)\subseteq\mbox{dom}(p)$ and for all $u\in
  \mbox{dom}(f_j)$, $p(g+\sigma+u)=1-f_j(u)$.
\end{enumerate}

When we need to speak of another forcing condition $\mathfrak{q}$ we
will denote the main part of the forcing condition, namely the partial
function, as $q$, and the rest of the terms in the tuple as
$n(\mathfrak{q}), \vec{t}(\mathfrak{q}), \vec{T}(\mathfrak{q}),
m(\mathfrak{q}), \vec{f}(\mathfrak{q})$, and $\vec{F}(\mathfrak{q})$,
respectively.

If $\mathfrak{p},\mathfrak{q}\in\mathbb{P}$, we define the extension
relation $\mathfrak{p}\leq\mathfrak{q}$ by
\begin{enumerate}[label=(\roman*)]
\item $p\supseteq q$,
\item $n(\mathfrak{p})\geq n(\mathfrak{q})$,
\item for all $1\leq i\leq n(\mathfrak{q})$,
  $t_i(\mathfrak{p})=t_i(\mathfrak{q})$ and
  $T_i(\mathfrak{p})=T_i(\mathfrak{q})$,
\item $m(\mathfrak{p})\geq m(\mathfrak{q})$,
\item for all $1\leq j\leq m(\mathfrak{q})$,
  $f_j(\mathfrak{p})=f_j(\mathfrak{q})$ and
  $F_j(\mathfrak{p})=F_j(\mathfrak{q})$.
\end{enumerate}
\end{defn}

For $h\in \Z^2$ and $p\in 2^{<\Z^2}$, we define $h\cdot p$ by letting
$\dom(h\cdot p)=h + \dom(p)$ and for all $g\in\dom(p)$, $(h\cdot
p)(h+g)=p(g)$. For
$$ \mathfrak{p}=(p,n,t_1,\dots, t_n,T_1,\dots, T_n, m, f_1,\dots, f_m,
F_1,\dots, F_m), $$ define
$$ h\cdot \mathfrak{p}=(h\cdot p,n,t_1,\dots, t_n,T_1,\dots, T_n, m,
f_1,\dots, f_m, F_1,\dots, F_m). $$ The invariance of the forcing
notion is easy to check.

We now prove a few simple lemmas which guarantee that $\bbP_{mt}$ does
indeed add a minimal $2$-coloring in $2^{\Z^2}$.

\begin{lemma}\label{lem1}
For any $g\in \Z^2$, the set $D_g=\{\mathfrak{p}\in\bbP_{mt} \colon
g\in\dom(p)\}$ is dense in $\bbP_{mt}$.
\end{lemma}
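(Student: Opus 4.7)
The plan is to extend $\mathfrak{p} = (p, n, \vec{t}, \vec{T}, m, \vec{f}, \vec{F})$ by tiling a larger rectangle with shifted copies of $p$, keeping all auxiliary parameters unchanged. Writing $\dom(p) = [a,b] \times [c,d]$, set $W = b - a + 1$ and $H = d - c + 1$. I would choose non-negative integers $M_1, M_2, N_1, N_2$ large enough that the rectangle
\[
R := [a - M_1 W,\, b + M_2 W] \times [c - N_1 H,\, d + N_2 H]
\]
contains the given point $g$. Note that $R$ is tiled exactly by the translates $(iW, jH) + \dom(p)$ for $(i,j)$ ranging over $I := [-M_1, M_2] \times [-N_1, N_2]$. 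Define $q \colon R \to \{0,1\}$ by $q\bigl((iW, jH) + u\bigr) = p(u)$ for $u \in \dom(p)$ and $(i,j) \in I$, and let $\mathfrak{q}$ be $\mathfrak{p}$ with $p$ replaced by $q$. Then $g \in \dom(q)$, and taking the tile with $(i,j) = (0,0)$ shows $q \supseteq p$, so clauses (i)--(v) in the definition of $\mathfrak{q} \le \mathfrak{p}$ will hold once $\mathfrak{q}$ is known to be a condition.

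The main step is verifying properties (a), (b1), (b2) of Definition~\ref{def:mt} for $\mathfrak{q}$. Given any $g' \in R$, write uniquely $g' = (i_0 W, j_0 H) + u$ with $(i_0,j_0) \in I$ and $u \in \dom(p)$. For (a), apply (a) for $\mathfrak{p}$ at $u$: there is $\tau \in T_i$ with $u + \tau,\, u + t_i + \tau \in \dom(p)$ and $p(u + \tau) \neq p(u + t_i + \tau)$. Then $g' + \tau$ and $g' + t_i + \tau$ both lie in the single tile $(i_0 W, j_0 H) + \dom(p) \subseteq R$, and by definition of $q$ we have $q(g' + \tau) = p(u + \tau)$ and $q(g' + t_i + \tau) = p(u + t_i + \tau)$, which differ. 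An entirely parallel argument, using the $\sigma \in F_j$ supplied by (b1) (respectively (b2)) of $\mathfrak{p}$ at $u$, gives a $\sigma$ such that $g' + \sigma + \dom(f_j)$ stays inside the tile and hence inside $\dom(q)$, with $q$ agreeing there with $p$ on the corresponding translate; so $q$ inherits the matching (or flipped) copy of $f_j$ required by (b1) and (b2).

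The only thing to watch is that the witnessing $\tau$ (or $\sigma$ together with $\dom(f_j)$) keeps all relevant points inside $\dom(p)$ and hence, after translation by $(i_0 W, j_0 H)$, inside the same tile; but this is automatic from the hypotheses (a), (b1), (b2) applied at $u \in \dom(p)$. In particular there is no ``seam'' issue across tile boundaries, since the witnesses never leave a single tile. I do not expect any significant obstacle; this lemma is essentially the observation that periodic tiling by a $\bbP_{mt}$-condition again satisfies all the witness clauses, and it is the cleanest density argument among those in this section.
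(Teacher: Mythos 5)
Your proposal is correct and takes essentially the same approach as the paper: tile a larger rectangle with translated copies of $p$, keep all auxiliary parameters fixed, and verify (a), (b1), (b2) by locating any point in a single tile so that the witnesses supplied for the original condition carry over by translation. The only cosmetic difference is that you tile symmetrically in all four directions, whereas the paper treats one quadrant explicitly and notes the other cases are similar.
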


\begin{proof}
Let $\mathfrak{q}\in\bbP_{mt}$ and $g\in \Z^2$. We need to find
$\mathfrak{p}\leq \mathfrak{q}$ with $g\in\dom(p)$.  Intuitively, we
use $q$ as a ``building block" and construct $p$ by a ``tiling" of $q$
until the domain of $p$ covers the element $g$.  More precisely,
suppose $\dom(q)=[a,b] \times [c,d]$.  If $g=(g_1,g_2)\in [a,b]\times
[c,d]$ we just take $\mathfrak{p}=\mathfrak{q}$. Without loss of
generality assume $g_1>b$, $g_2>d$ (the other cases being
similar). Let $w=b-a+1$, $h=d-c+1$.  Let
$$i_0=\displaystyle\left\lfloor
\displaystyle\frac{g_1-a}{w}\displaystyle\right\rfloor, \ \ j_0=\left\lfloor
\displaystyle\frac{g_2-c}{h}\displaystyle\right\rfloor. $$ We define $p$
with $\dom(p)=[a,b+i_0 w]\times[c,d+j_0 h]$ by letting $p(a+iw+i',
c+jh+j')=q(a+i',c+j')$ for all $0\leq i\leq i_0$, $0 \leq j \leq j_0$,
$0 \leq i' <w$, and $0\leq j'<h$.

Then define $n(\mathfrak{p})=n(\mathfrak{q})$,
$\vec{t}(\mathfrak{p})=\vec{t}(\mathfrak{q})$,
$\vec{T}(\mathfrak{p})=\vec{T}(\mathfrak{q})$,
$m(\mathfrak{p})=m(\mathfrak{q})$,
$\vec{f}(\mathfrak{p})=\vec{f}(\mathfrak{q})$, and
$\vec{F}(\mathfrak{p})=\vec{F}(\mathfrak{q})$.

We need to verify that $\mathfrak{p}\in\bbP_{mt}$. It suffices to
verify the conditions (a), (b1) and (b2). For (a) let $0\leq i\leq
n(\mathfrak{p})$ and let $h=(h_1,h_2)\in \dom(p)$. Let $k=(k_1,k_2)\in
[a,b]\times [c,d]$ be the unique element such that $w\mid (h_1-k_1)$,
$h\mid (h_2-k_2)$.  Let $\tau\in T_i$ be such that $k+\tau,
k+t_i+\tau\in [a,b]\times[c,d]$ and $q(k+\tau)\neq
q(k+t_i+\tau)$. Then by commutativity we have $h+\tau, h+t_i+\tau\in
\dom(p)$ and $$p(h+\tau)=q(k+\tau)\neq q(k+t_i+\tau)=p(h+t_i+\tau).$$
For (b1) let $1\leq j\leq m(\mathfrak{p})$ and
$h\in\mbox{dom}(p)$. Again let $k\in[a,b]\times [c,d]$ be the unique
element such that $w\mid(h_1-k_1)$, $h \mid (h_2-k_2)$. Then there is
$\sigma\in F_j$ such that $k+\sigma+\dom(f_j)\subseteq [a,b]\times
[c,d]$ and for all $u\in \dom(f_j)$, $q(k+\sigma+u)=f_j(u)$. Now
$h+\sigma+\dom(f_j)\subseteq \dom(p)$ and for all $u\in \dom(f_j)$,
$p(h+\sigma+u)=q(k+\sigma+u)=f_j(u)$. This proves (b1). The proof of
(b2) is similar.
\end{proof}

Note that in the construction of $p$ from $q$ in the above proof, $p$
is a ``tiling" by $q$. Denote $\overline{q}=1-q$ and call it the {\it
  flip} of $q$. Then $p$ could also be constructed as a ``tiling" by
both $q$ and $\overline{q}$, using an arbitrary combination of these
two kinds of ``tiles". The use of the flip tile was not necessary in
the above proof, but will be necessary in the following lemmas.

\begin{lemma}\label{lem2}
For any $t\in\Z^2-\{(0,0)\}$ the
set $$E_t=\{\mathfrak{p}\in\bbP_{mt}\colon \exists 1\leq i\leq
n(\mathfrak{p})\ t_i(\mathfrak{p})=t\}$$ is dense in $\bbP_{mt}$.
\end{lemma}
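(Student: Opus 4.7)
The plan is to mimic the tiling construction of Lemma~\ref{lem1}, but now allow some of the tiles to be the flip $\ocl{q}=1-q$ of $q$. Given $\mathfrak{q}\in\bbP_{mt}$ with $\dom(q)=[a,b]\times[c,d]$, $w=b-a+1$, $h=d-c+1$, and the direction $t=(t_1,t_2)\neq(0,0)$, I will extend $\mathfrak{q}$ to a condition $\mathfrak{p}\leq\mathfrak{q}$ that lists $t$ among its $t_i$'s. By the obvious symmetries I may assume $t_1>0$; the case $t_1<0$ is analogous, and if $t_1=0$ one swaps the roles of the two coordinates.

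For the construction, choose $N$ much larger than $w,h,|t_1|,|t_2|$ and let $\dom(p)=[a,a+2Nw-1]\times[c,c+Nh-1]$, viewed as a $2N\times N$ array of $q$-sized tiles indexed by $(K_1,K_2)\in[0,2N-1]\times[0,N-1]$, with tile $(0,0)$ coinciding with $\dom(q)$. Assign a flip-bit $\epsilon(K_1,K_2)=0$ if $K_1<N$ and $\epsilon(K_1,K_2)=1$ if $K_1\geq N$, and on each tile let $p$ equal $q$ if $\epsilon=0$ or $\ocl{q}$ if $\epsilon=1$. Since tile $(0,0)$ receives flip-bit $0$, the requirement $p\supseteq q$ is satisfied. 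Set $n(\mathfrak{p})=n(\mathfrak{q})+1$, $t_{n+1}(\mathfrak{p})=t$, and
$$T_{n+1}(\mathfrak{p})=\{(\alpha w+r_1,\beta h+r_2):|\alpha|\leq 2N,\ |\beta|\leq N,\ 0\leq r_1<w,\ 0\leq r_2<h\},$$
leaving all other data of $\mathfrak{q}$ unchanged.

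The verification that $\mathfrak{p}\in\bbP_{mt}$ splits into three parts, of which the first two are routine variants of Lemma~\ref{lem1}'s proof. First, for each old direction $t_i$ ($i\leq n(\mathfrak{q})$) and any $g\in\dom(p)$, the $\tau\in T_i(\mathfrak{q})$ witnessing (a) at the reduced point $k\in\dom(q)$ keeps $g+\tau$ and $g+t_i+\tau$ inside the same tile as $g$, so the common flip-bit cancels and $p(g+\tau)\neq p(g+t_i+\tau)$ follows from $q(k+\tau)\neq q(k+t_i+\tau)$. Second, (b1) and (b2) transfer from $\mathfrak{q}$ to $\mathfrak{p}$ using the unchanged sets $F_j(\mathfrak{q})$: a $q$-tile of $p$ witnesses (b1) via (b1) of $\mathfrak{q}$ and (b2) via (b2) of $\mathfrak{q}$, while a $\ocl{q}$-tile of $p$ witnesses (b1) via (b2) of $\mathfrak{q}$ (the flip turns $1-f_j$ into $f_j$) and dually (b2) via (b1) of $\mathfrak{q}$.

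The substantive step---the main obstacle---is condition (a) for the new direction $t$. For each $g\in\dom(p)$ I pick $\tau\in T_{n+1}$ so that $g+\tau$ occupies the specific relative position $(w-1,0)$ within its tile. Writing $\delta=\lfloor(w-1+t_1)/w\rfloor\geq 1$ and $\delta_l=\lfloor t_2/h\rfloor$, the tile-shift induced by adding $t$ is $(\delta,\delta_l)$, so $g+\tau$ and $g+t+\tau$ lie in distinct tiles. One then computes
$$p(g+\tau)\oplus p(g+t+\tau)=c_t\oplus\bigl[\epsilon(K_1,K_2)\oplus\epsilon(K_1+\delta,K_2+\delta_l)\bigr],$$
where $c_t\in\{0,1\}$ depends only on $q$ and $t$ (the fixed XOR of the two $q$-values at the two relative positions), and $(K_1,K_2)$ is the tile index of $g+\tau$. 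The $\epsilon$-difference equals $1$ exactly when $K_1\in[N-\delta,N-1]$ (straddling the seam) and $0$ otherwise, so both values are realized as $(\alpha,\beta)$ vary, and regardless of the value of $c_t$ I can choose $\tau$ making the total XOR equal $1$. The fiddly part is checking that the $\alpha,\beta$ ranges in $T_{n+1}$ are wide enough to reach a tile index $(K_1,K_2)$ of the required type while keeping both $g+\tau$ and $g+t+\tau$ in $\dom(p)$; this reduces to a boundary-case bookkeeping that goes through for $N$ sufficiently large.
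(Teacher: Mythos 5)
Your proof is correct, but it takes a genuinely different (and noticeably heavier) route from the paper's. Both arguments start by tiling a larger rectangle with copies of $q$ and its flip $\overline{q}$, and both transfer conditions (a) for the old $t_i$'s and (b1)/(b2) in the same way, by observing that the witnessing displacements $T_i(\mathfrak{q})$ and $F_j(\mathfrak{q})$ keep all relevant points inside a single tile, where the flip-bit is constant. The divergence is in how the new $2$-coloring direction $t$ is handled. The paper flips \emph{at most one} tile --- the corner tile --- and then uses a constant-target witness: for any $g$, set $\tau=(b,d)-g$, so that $g+\tau=(b,d)$ and $g+t+\tau=(b,d)+t$ are two \emph{fixed} points whose $p$-values were arranged to differ by the choice of whether to flip the corner. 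This eliminates all case analysis about tile indices. Your construction instead flips an entire half-block and then, for each $g$, chooses a tile index $(K_1,K_2)$ for $g+\tau$ depending on a fixed XOR $c_t$ and on whether the translate by $(\delta,\delta_l)$ straddles the vertical seam; you then appeal to $N$ being large to guarantee that the needed tile indices are available with both $g+\tau$ and $g+t+\tau$ in $\dom(p)$. This works --- I have checked that the ranges $K_1\in[0,2N-1-\delta]$ and $K_2\in[\max(0,-\delta_l),\min(N-1,N-1-\delta_l)]$ cover both the seam-straddling band $[N-\delta,N-1]$ and its complement for $N$ large --- but you do pay for it: your $T_{n+1}$ is a much larger box than the paper's tight $[-i_0w,b-a]\times[-j_0h,d-c]$, and the ``fiddly bookkeeping'' you defer is precisely the thing the paper's constant-target $\tau$ makes trivial. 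The idea of using a seam to flip parity is appealing and does generalize, but for this lemma the paper's single-corner trick is considerably cleaner.
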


\begin{proof}
Let $t=(t_1,t_2)\in \Z^2-\{(0,0)\}$ and $\mathfrak{q}\in\bbP_{mt}$. We
will find $\mathfrak{p}\leq\mathfrak{q}$ with $t=t_i(\mathfrak{p})$
for some $1\leq i\leq n(\mathfrak{p})$. If $t=t_i(\mathfrak{q})$ for
some $1\leq i\leq n(\mathfrak{q})$ we just take
$\mathfrak{p}=\mathfrak{q}$. Otherwise, we will define $\mathfrak{p}$
so that $n(\mathfrak{p})=n(\mathfrak{q})+1$ and
$t=t_{n(\mathfrak{p})}(\mathfrak{p})$. For notational simplicity let
$n(\mathfrak{q})=n-1$ and assume $\dom(q)=[a,b]\times[c,d]$. Also,
assume $t_1,t_2>0$ (the other cases being similar).  Let $w=b-a+1$,
$h=d-c+1$, and let $b+t_1=a+i_0 w+i_1$, $d+t_2=c+j_0 h+j_1$ where
$0\leq i_1 <w$, $0 \leq j_1 <h$. Note that at least one of $i_0,j_0$
is greater than $0$.  Define $p$ with $\dom(p)=[a,b+i_0w]\times
[c,d+j_0h]$ in a similar fashion as we did in the proof of the
previous lemma. Specifically, let $p(a+iw+i',c+jh+j')=q(a+i', c+j')$
for all $0\leq i\leq i_0$, $0 \leq j \leq j_0$, $0\leq i'<w$, and $0
\leq j' <h$, except in the case $i=i_0$ and $j=j_0$. If $q(a + i_1,c + j_1)\neq q(b,d)$,
then let $p(a+i_0w+i',c+j_0h+j')=q(a+i',c+j')$ for all $0\leq i' < w$,
$0\leq j' < h$.  Otherwise, if $q(a + i_1, c + j_1)=q(b,d)$, then let
$p(a+i_0w+i', c+j_0h+j')=1-q(a+i',c+j')$ for all such $i',
j'$. Intuitively, $p$ is a tiling of $q$ and $\overline{q}$, with only
the last tile being possibly $\overline{q}$. The choice between $q$
and $\overline{q}$ is made to ensure $p(b,d)\neq p((b,d)+t)$.

Let $T_n=[-i_0w, b-a]\times [-j_0h,d-c]$. Then
$n(\mathfrak{p})=n=n(\mathfrak{q})+1$,
$\vec{t}(\mathfrak{p})=\vec{t}(\mathfrak{q})^\smallfrown t$,
$\vec{T}(\mathfrak{p})=\vec{T}(\mathfrak{q})^\smallfrown T_n$,
$m(\mathfrak{p})=m(\mathfrak{q})$,
$\vec{f}(\mathfrak{p})=\vec{f}(\mathfrak{q})$, and
$\vec{F}(\mathfrak{p})=\vec{F}(\mathfrak{q})$.

The proof of (a) for all elements of $\vec{t}(\mathfrak{q})$ is the
same as in the previous proof. We verify (a) only for $t$. Let $g\in
\dom(p)$. Let $\tau=(b,d)-g$. Then $\tau\in T_n$, $g+\tau=(b,d)$ and
$g+t+\tau=(b,d)+t$. Both $g+\tau, g+t+\tau\in\dom(p)$ and
$p(g+\tau)=p(b,d)\neq p((b,d)+t)=p(g+t+\tau)$. Thus (a) holds.

We verify (b1) and (b2). For (b1) let $1\leq j\leq m(\mathfrak{p})$ and
$g=(g_1,g_2)\in \dom(p)$. If
the building block containing $g$ is a copy of $q$, then the proof is
the same as in the proof of Lemma \ref{lem1}. So suppose that the block
$(i \cdot w, j \cdot h) + [a, b] \times [c, d]$ containing $g$ is a
copy of $\overline{q}$ (by our construction, this necessitates $i = i_0$
and $j = j_0$). Let $k=(k_1,k_2)\in [a,b]\times[c,d]$ be the
unique element such that $w\mid (g_1-k_1)$, $h \mid (g_2-k_2)$. Then
by (b2) for $\mathfrak{q}$ there is $\sigma\in F_j$ such that
$k+\sigma+\dom(f_j)\subseteq [a,b]\times [c,d]$ and for all $u\in
\dom(f_j)$, $q(k+\sigma+u)=1-f_j(u)$. Now $g+\sigma+\dom(f_j)\subseteq
\dom(p)$ and for all $u\in \dom(f_j)$,
$p(g+\sigma+u)=1-q(k+\sigma+u)=f_j(u)$. This proves (b1) for
$\mathfrak{p}$. The proof of (b2) is similar.
\end{proof}

\begin{lemma}\label{lem3}
For any finite set $A\subseteq \Z^2$, the
set $$D_A=\{\mathfrak{p}\in\bbP_{mt} \colon \exists 1\leq j\leq
m(\mathfrak{p})\ A\subseteq \mbox{\rm dom}(f_j(\mathfrak{p}))\}$$ is
dense in $\bbP_{mt}$.
\end{lemma}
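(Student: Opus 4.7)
The plan is to construct an extension $\mathfrak{p} \leq \mathfrak{q}$ with one additional minimality pattern $f_{m+1}$ satisfying $A \subseteq \dom(f_{m+1})$. Without loss of generality (by enlarging $A$) we may assume that $A$ is a finite rectangle. Following the strategy of Lemmas~\ref{lem1} and \ref{lem2}, I will use a two-stage tiling: first grow $q$ into a larger building block $q'$ whose domain contains a translate of $A$; then grow $q'$ into $p$ by tiling with copies of both $q'$ and $\overline{q'}$ so that the pattern $f_{m+1}$ and its flip both appear within bounded distance of every point of $\dom(p)$.

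For the first stage, apply the tiling-by-$q$ construction of Lemma~\ref{lem1} to extend $q$ to $q'$ with $\dom(q)\subseteq \dom(q')=R'$ a rectangle of dimensions $w_1\times h_1$ large enough that some translate $v+A$ lies in $R'$. Define $f_{m+1}\colon A\to\{0,1\}$ by $f_{m+1}(u)=q'(v+u)$. Because $q'$ is a pure $q$-tiling, it still satisfies (a), (b1), (b2) with respect to the original parameters $\vec t,\vec T,\vec f,\vec F$ of $\mathfrak{q}$, exactly as verified in Lemma~\ref{lem1}. For the second stage, tile a large rectangle $\dom(p)\supseteq\dom(q')$ by a grid of $w_1\times h_1$ blocks, assigning each block type $q'$ or $\overline{q'}$ in a checkerboard pattern (placing the $q'$-block containing $\dom(q')$ at the bottom-left, so that $p\supseteq q'\supseteq q$). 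The domain is chosen large enough that every point of $\dom(p)$ lies within one block-step of both a full $q'$-tile and a full $\overline{q'}$-tile. Let $F_{m+1}$ be the finite set of all translation vectors of the form $(iw_1,jh_1)+v-g$, where $g$ ranges over representatives of the $(w_1,h_1)$-cosets in $R'$ and $(i,j)$ ranges over the nearest $q'$- or $\overline{q'}$-tile to $g$'s block. Define $\mathfrak{p}$ to be $\mathfrak{q}$ with $q$ replaced by $p$, $m$ incremented by one, and $f_{m+1},F_{m+1}$ appended.

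It remains to verify $\mathfrak{p}\in\bbP_{mt}$. Condition (a) together with (b1) and (b2) for the old parameters $f_j,F_j$ ($j\leq m$) follows exactly as in Lemma~\ref{lem2}: in a $q'$-tile the relations transfer directly from those of $\mathfrak{q}$ (which $q'$ inherits), while in a $\overline{q'}$-tile the roles of (b1) and (b2) swap, both witnessed by $\sigma$ drawn from the unchanged $F_j$. For the new $f_{m+1}$: given $g\in\dom(p)$, condition (b1) is witnessed by the $\sigma\in F_{m+1}$ that shifts $g$ to the point $(iw_1,jh_1)+v$ inside the nearest $q'$-tile, and (b2) analogously via the nearest $\overline{q'}$-tile. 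The main bookkeeping hurdle is checking that $F_{m+1}$ is simultaneously finite and rich enough to cover every $g\in\dom(p)$; this holds because $\sigma$ depends only on the residue of $g$ modulo $(w_1,h_1)$ together with the fixed bounded offset to the nearest tile of the required type in the checkerboard.
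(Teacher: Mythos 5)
Your proof is correct and follows essentially the paper's approach: first tile $q$ (as in Lemma~\ref{lem1}) into a block $q'$ whose domain covers $A$, then extend to $p$ by placing $q'$ and its flip $\overline{q'}$ side by side and append the new minimality pattern $f_{m+1}$ together with a finite witness set $F_{m+1}$. The paper's construction is leaner---it juxtaposes exactly one copy of $r$ with one copy of $\overline r$ and takes $F_{m+1}$ to be a single rectangle of translations reaching both copies from any $g\in\dom(p)$, rather than your larger checkerboard tiling with residue-based offsets---but the underlying idea is the same.
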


\begin{proof}
Let $\mathfrak{q}\in\mathbb{P}$ and $A\subseteq \Z^2$ be finite. By
repeated application of Lemma~\ref{lem1} we can obtain
$\mathfrak{r}\leq\mathfrak{q}$ such that $A\subseteq\dom(r)$.  Let
$\dom(r)=[a,b]\times [c,d]$. We define $p$ with
$\dom(p)=[a,2b-a+1]\times [c,d]$ to be a copy of $r$ (on $[a,b]\times
[c,d]$) and a copy of $\overline{r}$ immediately to the right (on
$[b+1,2b-a+1]\times [c,d]$).

Then define $n(\mathfrak{p})=n(\mathfrak{r})$,
$\vec{T}(\mathfrak{p})=\vec{T}(\mathfrak{r})$,
$m(\mathfrak{p})=m(\mathfrak{r})+1$,
$\vec{f}(\mathfrak{p})=\vec{f}(\mathfrak{r})^\smallfrown r$, and
$\vec{F}(\mathfrak{p})=\vec{F}(\mathfrak{r})^\smallfrown ([a-2b-1,
  b-2a+1]\times [-d,-c])$.

It suffices to verify that $\mathfrak{p}\in\bbP_{mt}$. (a) holds by a
similar argument as in the proofs of previous two lemmas. For (b1) and
(b2), the proof for $1\leq j\leq m(\mathfrak{r})$ is similar to the
proofs in the previous two lemmas. Finally, for $g=(g_1,g_2)\in
\dom(p)$ there are obviously $\sigma, \sigma'\in [a-2b-1,
  b-2a+1]\times [-d,-c]$ such that $g+\sigma=(0,0)$ and
$g+\sigma'=(b-a+1,0)$. Then for all $u\in [a,b]\times [c,d]$,
$p(g+\sigma+u)=r(u)$ and $p(g+\sigma'+u)=1-r(u)$.
\end{proof}

In fact, the above proof gives the following lemma.

\begin{lemma}\label{lem3.5}
For any $\mathfrak{q}\in\bbP_{mt}$, the set
$$ D_q=\{\mathfrak{p}\in\bbP_{mt}\colon \exists 1\leq j\leq
m(\mathfrak{p})\ q\subseteq f_j(\mathfrak{p})\} $$ is dense below
$\mathfrak{q}$ in $\bbP_{mt}$.
\end{lemma}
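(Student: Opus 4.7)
The plan is to observe that the argument given for Lemma~\ref{lem3} essentially proves Lemma~\ref{lem3.5} directly, with only a trivial change. In the proof of Lemma~\ref{lem3}, the condition $\mathfrak{q}$ is first extended to some $\mathfrak{r}\leq\mathfrak{q}$ so that $A\subseteq\dom(r)$, and then $\mathfrak{p}$ is constructed by juxtaposing $r$ with $\overline{r}$ and appending $r$ itself as a new $f_j$. For Lemma~\ref{lem3.5}, I want $q$ to occur as some $f_j(\mathfrak{p})$; the only adjustment needed is to skip the preliminary extension step and apply the construction directly to $\mathfrak{q}$.

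First I would set $\dom(q)=[a,b]\times[c,d]$ and define $p$ with $\dom(p)=[a,2b-a+1]\times[c,d]$ by placing a copy of $q$ on $[a,b]\times[c,d]$ and a copy of $\overline{q}$ on $[b+1,2b-a+1]\times[c,d]$. I would then take $n(\mathfrak{p})=n(\mathfrak{q})$, $\vec{t}(\mathfrak{p})=\vec{t}(\mathfrak{q})$, $\vec{T}(\mathfrak{p})=\vec{T}(\mathfrak{q})$, $m(\mathfrak{p})=m(\mathfrak{q})+1$, $\vec{f}(\mathfrak{p})=\vec{f}(\mathfrak{q})\conc q$, and $\vec{F}(\mathfrak{p})=\vec{F}(\mathfrak{q})\conc ([a-2b-1,b-2a+1]\times[-d,-c])$. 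Then $\mathfrak{p}\leq\mathfrak{q}$ by construction and $f_{m(\mathfrak{p})}(\mathfrak{p})=q$, so in particular $q\subseteq f_{m(\mathfrak{p})}(\mathfrak{p})$ and $\mathfrak{p}\in D_q$.

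Second, I would verify clauses (a), (b1), and (b2) exactly as in the proof of Lemma~\ref{lem3}. For (a), the witnesses $\tau\in T_i(\mathfrak{q})$ for $\mathfrak{q}$ continue to work in each tile, since the flip $\overline{q}$ preserves the property $q(u)\neq q(v)$. For the old $j\leq m(\mathfrak{q})$, (b1) and (b2) follow tile-by-tile: a $q$-tile supplies the required $f_j$-pattern via (b1) of $\mathfrak{q}$, and a $\overline{q}$-tile supplies it via (b2) of $\mathfrak{q}$, and symmetrically for (b2) of $\mathfrak{p}$. For the new index $j=m(\mathfrak{p})$, the set $F_{m(\mathfrak{p})}=[a-2b-1,b-2a+1]\times[-d,-c]$ is large enough that every $g\in\dom(p)$ has an offset into the $q$-tile (giving (b1)) and an offset into the $\overline{q}$-tile (giving (b2)).

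There is no real obstacle: the construction is an immediate specialization of the previous lemma, and the only remark worth making is that $q$ itself plays the role that $r$ played before, which is why we obtain $q\subseteq f_{m(\mathfrak{p})}(\mathfrak{p})$ rather than merely a superset of some prescribed finite patch.
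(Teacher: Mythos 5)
Your argument is correct and is essentially the same as the paper's: the paper's proof of Lemma~\ref{lem3.5} consists of exactly the remark that one should repeat the construction of Lemma~\ref{lem3} with $q$ itself in the role of $r$, tiling one copy of $q$ next to one copy of $\overline{q}$ and appending $q$ as the new $f_j$. Your fuller spelling-out of the verification of (a), (b1), (b2) matches what the paper leaves implicit.
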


\begin{proof}
As in the previous proof, define $p$ to be a tiling with one copy of
$q$ and a copy of $\overline{q}$ to the right.
\end{proof}

Putting these lemmas together we have the following.

\begin{lemma} \label{mt4}
If $x_G$ is generic for $\bbP_{mt}$, then $x_G$ is a minimal
$2$-coloring.
\end{lemma}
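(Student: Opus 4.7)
The plan is to verify, using the density lemmas already established, that the generic real $x_G = \bigcup\{p : \mathfrak{p} \in G\}$ is a total function, is a $2$-coloring, and is minimal. Totality is immediate from Lemma \ref{lem1}: for each $g \in \Z^2$ the set $D_g$ is dense, so $G$ meets $D_g$, giving some $\mathfrak{p} \in G$ with $g \in \dom(p)$ and hence $g \in \dom(x_G)$.

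For the $2$-coloring property, I fix a nonzero $s \in \Z^2$ and produce a finite $T \subseteq \Z^2$ such that for every $g \in \Z^2$ some $\tau \in T$ satisfies $x_G(g+\tau) \neq x_G(g+s+\tau)$. Using Lemma \ref{lem2}, I choose $\mathfrak{p} \in G \cap E_s$ together with an index $i$ for which $t_i(\mathfrak{p}) = s$, and set $T = T_i(\mathfrak{p})$. For arbitrary $g \in \Z^2$, iterating Lemma \ref{lem1} yields a condition $\mathfrak{q} \in G$ extending $\mathfrak{p}$ with $g+\tau,\, g+s+\tau \in \dom(q)$ for every $\tau \in T$. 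Because the order on $\bbP_{mt}$ preserves $t_i$ and $T_i$, clause (a) applied to $\mathfrak{q}$ and $g$ delivers some $\tau \in T$ with $q(g+\tau) \neq q(g+s+\tau)$, and therefore $x_G(g+\tau) \neq x_G(g+s+\tau)$.

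For minimality, I fix a finite $A \subseteq \Z^2$ and seek a finite $F \subseteq \Z^2$ such that for every $g$ there is $\sigma \in F$ with $x_G(g+\sigma+a) = x_G(a)$ for all $a \in A$. Applying Lemma \ref{lem1} finitely often, I pick $\mathfrak{q} \in G$ with $A \subseteq \dom(q)$. By Lemma \ref{lem3.5}, the set $D_q$ is dense below $\mathfrak{q}$, so $G$ contains some $\mathfrak{p}$ extending $\mathfrak{q}$ together with an index $j$ such that $q \subseteq f_j(\mathfrak{p})$. I then set $F = F_j(\mathfrak{p})$. For any given $g \in \Z^2$, a further use of Lemma \ref{lem1} gives $\mathfrak{r} \in G$ extending $\mathfrak{p}$ with $g+\sigma+\dom(f_j) \subseteq \dom(r)$ for every $\sigma \in F$, and clause (b1) for $\mathfrak{r}$ produces $\sigma \in F$ with $r(g+\sigma+u) = f_j(u)$ for all $u \in \dom(f_j)$. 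Since $f_j \restriction A = q \restriction A = x_G \restriction A$, this yields $x_G(g+\sigma+a) = x_G(a)$ for every $a \in A$, as required.

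There is no serious obstacle: the substantive work has already been done in Lemmas \ref{lem1}--\ref{lem3.5}. The only point needing mild care is the passage from the \emph{local} clauses (a) and (b1), which mention only $\dom(p)$, to the corresponding universal statements about $x_G$; this is handled by first fixing the witness set $T_i$ (respectively $F_j$) via a single density meeting and then, for each $g$, extending further inside $G$ until all required translates lie in the domain of some condition, exploiting the invariance of $\vec{t}, \vec{T}, \vec{f}, \vec{F}$ under the order on $\bbP_{mt}$.
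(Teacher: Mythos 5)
Your proof is correct and follows essentially the same route as the paper: totality from Lemma~\ref{lem1}, the $2$-coloring property from Lemma~\ref{lem2}, and minimality from Lemma~\ref{lem3.5}, with the witnessing sets $T_i$ and $F_j$ frozen by the extension relation and the ``local'' clauses~(a) and~(b1) promoted to universal statements about $x_G$ via further density meetings. You spell out a bit more detail (e.g.\ explicitly extending inside $G$ to cover the needed translates) than the paper does, but the underlying argument is identical.
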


\begin{proof}
Lemma~\ref{lem1} gives that $x_G \in 2^{\Z^2}$. Lemma~\ref{lem2} gives
that $x_G$ is a $2$-coloring. To see that $x_G$ is minimal, let $A
\subseteq \Z^2$ be finite, and let $f=x_G\res A$. Let $\mathfrak{q}
\in G$ be such that $\dom(q) \supseteq A$. From Lemma~\ref{lem3.5}
there is a $\mathfrak{p} \in G$ with $f \subseteq f_j$ for some $1
\leq j \leq m(\mathfrak{p})$. We then have that $F_j(\mathfrak{p})$
witnesses the minimality condition for $A$, that is, for all $g \in
\Z^2$ there is a $t \in F_j(\mathfrak{p})$ such that $x_G(g+t
+u)=f(u)$ for all $u\in \dom(f)=A$.
\end{proof}

Using Lemma~\ref{mt4} and the proof of Theorem~\ref{dtm} we can get a
direct proof of Corollary~\ref{cor:dtm} which is self-contained and
does not rely on the {\em a priori} construction of a minimal $2$-coloring.

We next consider a relatively minor variation of $\bbP_{mt}$ which
will turn out to have interesting consequences.

\begin{defn}
The {\it odd minimal $2$-coloring forcing} $\bbP_{omt}$ is defined
exactly as $\bbP_{mt}$ in Definition~\ref{def:mt} except that we add
the requirement that if $\dom(p)=[a,b]\times [c,d]$ then both $b-a+1$
and $d-c+1$ are odd.  That is, the rectangle representing the domain
of $p$ must have odd numbers of vertices on each of the sides.
\end{defn}

The following result intuitively states that any Borel complete
section has an odd recurrence on some orbit.

\begin{thm} \label{thm:oddrecurrence}
Let $O=\{ g\in \Z^2\,:\, \mbox{$\|g\|$ is odd }\}$. If $B \subseteq
F(2^{\Z^2})$ is a Borel complete section then there is $x\in
F(2^{\Z^2})$ and finite $T\subseteq O$ such that for any $y\in[x]$,
$T\cdot y\cap B\neq\varnothing$.
\end{thm}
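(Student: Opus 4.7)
The plan is to mimic the proof of Corollary \ref{meetorbitrecurrent}, replacing the orbit forcing with the odd minimal $2$-coloring forcing $\bbP_{omt}$. I would first verify that the density lemmas (analogues of Lemmas \ref{lem1}--\ref{mt4}) carry over to $\bbP_{omt}$ with only superficial modifications: when extending a condition by tiling, one pads by an extra row or column to maintain odd side lengths. Consequently every $\bbP_{omt}$-generic $x_G$ is a minimal $2$-coloring in $F(2^{\Z^2})$.

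The key additional feature to extract from $\bbP_{omt}$ is an \emph{odd} strengthening of minimality: for every finite $A \subseteq \Z^2$, letting $f = x_G \res A$, there is a finite $T \subseteq O$ such that for each $g \in \Z^2$ some $t \in T$ satisfies $(g + t) \cdot x_G \res A = f$. The odd side-length restriction drives this: when a condition $\mathfrak{q}$ is extended by a tiling of $q$ and $\bar q$, the tile offsets lie in $w\Z \times h\Z$ with $w, h$ both odd, so $\|(iw, jh)\| = |i|w + |j|h$ has parity $i + j \pmod 2$, and tile positions of both parities appear in the extension. By a density argument one can arrange that each witness set $F_j$ is contained in $O$, a property preserved under the forcing order.

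Given odd minimality, the remainder parallels Corollary \ref{meetorbitrecurrent}. I would refine the standard topology on $F(2^{\Z^2})$ to a Polish topology $\tau$ in which $B$ is open and having the same Borel sets. Then I would prove an odd analogue of Theorem \ref{recurrentfactor} using $\bbP_{omt}$ in place of orbit forcing: for the generic $x_G$ and the identity map into $(F(2^{\Z^2}), \tau)$, every $\tau$-open set $V$ meeting $[x_G]$ admits a finite $T \subseteq O$ with $T \cdot y \cap V \neq \varnothing$ for all $y \in [x_G]$. The proof follows the original one by selecting a condition $U \in \bbP_{omt}$ with $U \Vdash \xd_G \in V$ (using forcing invariance to absorb any shift) and applying odd minimality to the pattern stored in $U$. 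Applying this with $V = B$ and using $\bs^1_1$-absoluteness to descend to the ground model completes the proof.

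The main obstacle is establishing the odd minimality property rigorously. Strengthening the density lemmas for $\bbP_{omt}$ so that each $F_j$ added to a condition lies inside $O$ requires distributing copies of each $f_j$ at tile positions of alternating parities in a way that is stable under further extensions.
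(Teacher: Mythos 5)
You correctly identify the forcing notion $\bbP_{omt}$, correctly observe that its density lemmas survive (using an odd number of tiles to keep side lengths odd), and correctly see that the oddness of the side lengths is the engine of the argument. But the specific route you propose --- strengthening the minimality clauses so that every witness set $F_j$ lies inside $O$ --- is not what the paper does, and I do not think it goes through as stated. In the tiling extensions of Lemmas~\ref{lem3} and \ref{lem3.5} the witness $\sigma$ for a point $g$ decomposes as $\sigma_0 + (\text{tile shift})$, where $\sigma_0$ is a witness from the old $F_j$ and the tile shift is of the form $(iw, jh)$ with $w,h$ odd. The available tile shifts are dictated by which tile $g$ sits in and by which tiles are copies of $r$ versus $\overline{r}$, so you cannot freely choose their parity; with the natural three-tile extension $r, \overline{r}, r$ the $r$-to-$r$ offsets $0, \pm 2w$ are all even, and switching between the (b1) and (b2) witnesses to compensate creates bookkeeping you have not resolved. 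You yourself flag this as ``the main obstacle,'' and it really is a gap, not a detail.

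The paper sidesteps all of this with a much lighter trick, and never needs an ``odd minimality'' property of $x_G$. Starting from $\mathfrak{p}\in G$ with $\mathfrak{p}\forces g_0\cdot\xd_G\in B$, one extends to $\mathfrak{q}\in G$ whose partial function $q$ contains \emph{two} disjoint copies of $p$ at a fixed odd offset $h\in O$. This is exactly where the odd-side-length restriction pays off: two copies of $q$ placed side by side have anchors at odd distance, so such a $\mathfrak{q}$ exists below any extension of $\mathfrak{p}$ and hence can be found in $G$. Now apply \emph{ordinary} minimality of $x_G$ to the single open set $U_q$: there is $N$ such that every $y\in[x_G]$ admits $\tau$ with $\|\tau\|\leq N$ and $\tau\cdot y\in U_q$. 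Then both $\tau\cdot y$ and $(h+\tau)\cdot y$ lie in $U_p$ and are generic extending $\mathfrak{p}$, so $g_0\cdot(\tau\cdot y)\in B$ and $g_0\cdot((h+\tau)\cdot y)\in B$. Since $h\in O$, exactly one of $\tau+g_0$ and $\tau+h+g_0$ has odd norm, and both have norm $\leq N+\|q\|+\|g_0\|$. Taking $T$ to be all odd-norm vectors of that bound finishes the proof. Note also that no Polish topology refinement or analogue of Theorem~\ref{recurrentfactor} is needed; the paper argues directly from the condition forcing membership in $B$. Your detour through the refined topology is not wrong, but it is unnecessary overhead, and the core obstacle you name is the thing the paper is specifically engineered to avoid.
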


\begin{proof}
Let $x_G$ be a generic real for $\bbP_{omt}$. Since $B^{V[G]}$
continues to be a Borel complete section, there is $g_0\in \Z^2$ such
that $g_0\cdot x_G\in B$. So there is $\mathfrak{p}\in G$ such that
$\mathfrak{p}\forces g_0\cdot\dot{x}_G\in B$. Now note that for any
$\mathfrak{q} \leq \mathfrak{p}$ there is an $\mathfrak{r} \leq
\mathfrak{q}$ such that $r$ contains two disjoint copies of $p$ at an
odd distance apart. We can, in fact, get $r$ by placing two copies of
$q$ next to each other, since $\dom(q)$ is a rectangle with an odd
number of vertices on each side. This implies that there is a
$\mathfrak{q}\in G$ with $\mathfrak{q}\leq \mathfrak{p}$ and such that
$q$ contains two disjoint copies of $p$ an odd distance apart. Let
$\|q\|$ denote the sum of the side lengths of $\dom(q)$.

Since $x_G$ is minimal, there is $N\in \N$ such that for all $g\in
\Z^2$ there is a $\tau\in\Z^2$ with $\|\tau\|\leq N$ such that
$\tau\cdot (g\cdot x_G)\in U_q$, where $U_q$ is the basic open set in
$2^{\Z^2}$ determined by $q$. Now let $T$ be all the elements $g\in O$
with $\|g\|\leq N+\|q\|+\|g_0\|$. Fix any $y\in[x_G]$. Fix $\tau$ with
$\|\tau\|\leq N$ such that $\tau\cdot y\in U_q$. In particular
$\tau\cdot y\in U_p$. Let $h\in O$ be such that $\|h\|\leq\|q\|$ and
$h\cdot (\tau\cdot y)\in U_p$. Since both $\tau\cdot y$ and $h\cdot
(\tau\cdot y)$ are also generic and extend the condition
$\mathfrak{p}$, we have that $g_0\cdot (\tau\cdot y)\in B$ and
$g_0\cdot (h\cdot (\tau\cdot y))\in B$. We have both $\|\tau+g_0\|\leq
\|\tau\|+\|g_0\|\leq N+\|g_0\|$ and $\|\tau+h+g_0\|\leq
N+\|q\|+\|g_0\|$. Since $h\in O$, one of $\tau+g_0$ and $\tau+h+g_0$
is an element of $O$, and therefore an element of $T$. Thus we have
shown that $T\cdot y\cap B\neq\varnothing$ as required.
\end{proof}

The next theorem is about Borel chromatic $k$-colorings of
$F(2^{\Z^2})$. Intuitively, it states that there does not exists a
Borel chromatic $k$-coloring $f$ of $F(2^{\Z^2})$ such that on every
orbit there are arbitrarily large regions on which $f$ induces a
chromatic $2$-coloring.

\begin{thm} \label{ltc}
Suppose $f \colon F(2^{\Z^2}) \to \{0,1,\dots,k-1\}$ is a Borel
function.  Then there is an $x \in F(2^{\Z^2})$ and an $M \in \N$ such
that if the map $t \mapsto f(t \cdot x)$ is a chromatic $2$-coloring
on $[a,b]\times [c,d]$, then $b-a, d-c \leq M$.
\end{thm}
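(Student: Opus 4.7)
The plan is to argue by contradiction, mimicking the odd minimal $2$-coloring forcing argument used in the proof of Theorem~\ref{thm:oddrecurrence}. Suppose toward contradiction that for every $x \in F(2^{\Z^2})$ and every $M \in \N$ there is a rectangle $[a,b]\times[c,d]$ with $\max(b-a,d-c)>M$ on which $t\mapsto f(t\cdot x)$ is a chromatic $2$-coloring. Let $x_G$ be $\bbP_{omt}$-generic over $V$. By absoluteness the failure hypothesis persists in $V[G]$, so $[x_G]$ admits chromatic $2$-coloring rectangles of arbitrarily large size. Since there are only finitely many color pairs $\{i,j\}\subseteq\{0,\ldots,k-1\}$, finite pigeonhole in $V[G]$ supplies a fixed pair $\{i_0,j_0\}$ such that $[x_G]$ admits such rectangles of every size using precisely the colors $\{i_0,j_0\}$.

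In particular, $f^{-1}(i_0)\cap[x_G]\neq\varnothing$, so there exist $g_0\in\Z^2$ and a condition $\mathfrak{p}\in G$ with $\mathfrak{p}\forces f(g_0\cdot\dot{x}_G)=i_0$. I now mirror the remainder of the argument for Theorem~\ref{thm:oddrecurrence}. Because $\bbP_{omt}$-conditions have rectangular domains with odd side lengths, $\mathfrak{p}$ extends to some $\mathfrak{q}\in G$ whose domain contains two disjoint copies of $p$ at an odd translation apart. Combining with the minimality of $x_G$ (via an integer $N$ such that every point of $[x_G]$ has a translate by some $\tau$ with $\|\tau\|\leq N$ lying in the basic open set $U_q$), I produce a finite $T\subseteq O$ with the property that for every $y\in[x_G]$ there is $g\in T$ with $f(g\cdot y)=i_0$. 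Note that the proof of Theorem~\ref{thm:oddrecurrence} uses its complete-section hypothesis only to locate the initial point $g_0\cdot x_G\in B$; here that point is supplied directly by $f^{-1}(i_0)$ meeting $[x_G]$.

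For the contradiction, fix a chromatic $2$-coloring rectangle $R$ on $[x_G]$ with colors $\{i_0,j_0\}$ and both side lengths exceeding $2\max_{t\in T}\|t\|$. Interior points of $R$ at $\ell_1$-distance greater than $\max_{t\in T}\|t\|$ from $\partial R$ form a region large enough to contain a point $y$ with $f(y)=i_0$, since roughly half of the points of $R$ carry color $i_0$. Then $T\cdot y\subseteq R$, and because each $t\in T$ has odd $\ell_1$-norm, the checkerboard structure of the chromatic $2$-coloring on $R$ forces $f(t\cdot y)=j_0\neq i_0$ for every $t\in T$. Thus $T\cdot y\cap f^{-1}(i_0)=\varnothing$, contradicting the recurrence property established above. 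Absoluteness then delivers appropriate $x\in V$ and $M\in V$.

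The main obstacle I expect is verifying that the odd-recurrence argument adapts cleanly when $f^{-1}(i_0)$ is not assumed to be a global complete section. The proof of Theorem~\ref{thm:oddrecurrence} invokes that hypothesis only when securing a specific point of $B$ in $[x_G]$, and here the large $\{i_0,j_0\}$-rectangle supplies such a point, so the remainder of the argument transfers essentially verbatim. A secondary subtlety is that the pigeonhole choice of $\{i_0,j_0\}$ is made in $V[G]$ rather than $V$, but the resulting pair can be treated as a forcing name without disrupting the absoluteness step used to lift the conclusion back to $V$.
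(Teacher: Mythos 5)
Your proof is correct and follows essentially the same route as the paper: a $\bbP_{omt}$-generic $x_G$, a condition $\mathfrak{p}$ forcing a fixed color, an extension $\mathfrak{q}$ containing two odd-separated copies of $p$, minimality of $x_G$, and the odd-parity clash with a large chromatically $2$-colored region. The differences are organizational rather than substantive: you first package the forcing argument as an explicit odd-recurrence lemma (mirroring Theorem~\ref{thm:oddrecurrence}) and add a finite pigeonhole in $V[G]$ to pin down the color pair $\{i_0,j_0\}$, whereas the paper derives the contradiction in one stroke by locating two odd-separated generic translates $\tau\cdot(\sigma\cdot x_G)$ and $h\cdot\tau\cdot(\sigma\cdot x_G)$ inside the $2$-colored square, both of which extend $\mathfrak{p}$ and hence share $f$-value $i$; the fact that $i$ is automatically one of the two colors of the square is then free since those translates lie inside it. Your pigeonhole step is therefore not strictly necessary (once the recurrence set $T$ for $f^{-1}(i_0)$ is in hand, any sufficiently large $2$-colored rectangle gives the contradiction whether or not $i_0$ is one of its two colors), but it is harmless. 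One shared point worth keeping in mind: both your argument and the paper's use a $2$-colored region that is large in both coordinate directions (a square of radius $a>N+\|q\|$), which matches the intended reading of the conclusion but is slightly stronger than the literal negation of the stated inequality $b-a,\,d-c\leq M$.
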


\begin{proof}
Fix a Borel function $f \colon F(2^{\Z^2})\to \{ 0,1, \dots,k-1\}$.
Let $x_G$ be a generic real for $\bbP_{omt}$. We claim that $x_G$ is
as required.  Recall that $x_G$ is a minimal $2$-coloring. Suppose
that on $x_G$ the function $f$ had arbitrarily large regions which
were chromatic $2$-colorings.  Let $\mathfrak{p}\in G$ be such that
$\mathfrak{p} \forces (f(\xd_G)=i)$ for some fixed $i \in \{
0,\dots,k-1\}$.  Let $\mathfrak{q}\in G$ with $\mathfrak{q}\leq
\mathfrak{p}$ and such that $q$ contains two disjoint copies of $p$ an
odd distance apart. Let again $\|q\|$ denote the sum of the side
lengths of $\dom(q)$.

By minimality of $x_G$, let $N \in \N$ be such that for all $g \in
\Z^2$, there is a $\tau\in \Z^2$ with $\| \tau \| \leq N$ such that
$\tau\cdot (g \cdot x_G) \in U_q$. Since $x_G$ is assumed to have
arbitrarily large regions which are chromatically $2$-colored by $f$,
fix $\sigma \in \Z^2$ such that $\tau \mapsto f(\tau\cdot (\sigma
\cdot x_G))$ is a chromatic $2$-coloring of a square $[-a,a]^2$ in
$\Z^2$ with $a > N+ \| q\|$. Fix $\tau$ with $\| \tau \| \leq N$ such
that $\tau\cdot (\sigma\cdot x_G) \in U_q$. In particular $\tau\cdot
(\sigma \cdot x_G) \in U_p$.  Let $h \in O$ be such that $\|h\|\leq
\|q\|$ and $h\cdot (\tau\cdot (\sigma\cdot x_G))\in U_p$.  Now both
$\tau\cdot (\sigma\cdot x_G)$ and $h\cdot (\tau\cdot (\sigma\cdot
x_G))$ are generic and both extend the condition $\mathfrak{p}$. So,
$f(\tau\cdot (\sigma\cdot x_G))=i=f(h\cdot (\tau\cdot (\sigma \cdot
x_G)))$. This is a contradiction as $\| \tau\|\leq N\leq a$ and $\| h
+ \tau\|\leq N+ \|h\| \leq a$, and $\tau$, $h + \tau$ are an odd
distance apart in $\Z^2$.
\end{proof}

\begin{rem}It is easy to construct Borel $f \colon F(2^{\Z^2})\to \{ 0,1\}$ such that for comeager many 
$x \in F(2^{\Z^2})$ we have that $t \mapsto f(t \cdot x)$ has
  arbitrarily large regions which are chromatically $2$-colored.  We
  can, in fact, take $f(x)=x(0,0)$.  Similarly, measure-one (in the
  natural product measure) many $x \in F(2^{\Z^2})$ have arbitrarily
  large regions which are chromatically $2$-colored by $f$. This shows
  that ordinary category arguments (or, equivalently, Cohen forcing)
  and measure arguments are not sufficient to prove Theorem~\ref{ltc}.
\end{rem}

\section{Grid Periodicity Forcing} \label{sec:gp}
In this last section we introduce another variation of the minimal
$2$-coloring forcing which (similar to Theorem~\ref{ltc}) will show
that Borel complete sections in $F(2^{\Z^d})$ must have orbits on
which highly regular structure is exhibited.  We will take $d=2$ for
the following arguments for simplicity, though the arguments in the
general case are only notationally more complicated.

As before, we can describe our forcing either as a special case of
orbit forcing (by first building a particular minimal $2$-coloring $x$
and then considering $\bbP_x$), or we can describe the forcing
directly. Here we again describe the forcing directly.

\begin{defn} \label{def:gpf}
Let $n$ be a positive integer. The {\it grid periodicity forcing}
$\bbP_{gp}(n)$ is defined as follows.
\begin{enumerate}
\item
A condition $p \in \bbP_{gp}(n)$ is a function
$$p\colon R\setminus\{ u\} \to \{ 0,1\}$$ where $R=[a,b]\times [c,d]$
is a rectangle in $\Z^2$ with $w=b-a+1$, $h=d-c+1$ both powers of $n$,
and $u\in R$. We write $R(p)$, $w(p)$, $h(p)$, $u(p)$ for the
corresponding objects and parameters.
\item
The conditions are ordered by $p \leq q$ iff
\begin{enumerate}
\item \label{gpf2a} $R(p)$ is obtained by a rectangular tiling by
  $R(q)$, that is, $R(p)$ is the disjoint union $R(p)=\bigcup_{t \in
  A} t \cdot R(q)$ where $A$ is a subset of $\Z^2$ of the form
$$A=\left\{ (iw(q), jh(q))\,\colon\, i_0\leq i \leq i_1, j_0 \leq j
  \leq j_1\right\}$$ for some $i_0 \leq i_1,j_0 \leq j_1$;
\item \label{gpf2b} If $c \in \dom(q)$ and $t\in A$, then
  $p(c+t)=q(c)$;
\item
For some $t\in A$ we have $u(p)=u(q)+t$.
\end{enumerate}
\end{enumerate}
\end{defn}

\begin{figure}[ht]
\centering
\begin{tikzpicture}[scale=0.03]
\draw[step=20] (-60,-60) grid (60,60); 
\draw[fill, lightgray] (0,0) rectangle (20,20); 
\foreach \i in {-3,...,2}  \foreach \j in  {-3,...,2} { \draw[fill] (\i*20+15,\j*20+15) circle (5mm);}
\end{tikzpicture}
\caption{The extension relation in the grid periodicity forcing
  $\bbP_{gp}$.\label{fig:gp}}
\end{figure}
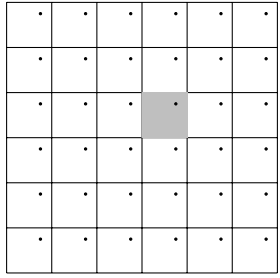
When $n$ is understood or does not matter, we will just write
$\bbP_{gp}$ instead of $\bbP_{gp}(n)$. The conditions and extension
relation for $\bbP_{gp}$ are illustrated in Figure~\ref{fig:gp}.  A
$\bbP_{gp}$-generic $G$ gives a real $x_G \in 2^{\Z^2}$. We next
establish the properties of this real.

\begin{lemma} \label{lem:gpmt}
$x_G$ is a minimal $2$-coloring.
\end{lemma}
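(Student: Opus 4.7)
The plan is to verify three properties of $x_G$ in turn: total definedness on $\Z^2$, minimality, and the $2$-coloring property, each via density arguments exploiting the self-similar tiling structure of Definition~\ref{def:gpf}.

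For totality, the set $D_g = \{p : g \in \dom(p)\}$ is dense for each $g \in \Z^2$: given any $q$, I extend by a tiling whose side counts are large powers of $n$ so that $R(p) \ni g$, and I place the new hole $u(p)$ away from $g$. For minimality, given a finite $A \subseteq \Z^2$, density supplies $p \in G$ with $A \subseteq \dom(p)$; I claim $T_A := -R(p)$ witnesses the recurrence criterion. Given any $g \in \Z^2$, take $p'' \in G$ with $p'' \leq p$ and $g \in \dom(p'')$. The tiling relation writes $g = \tau + r$ for some tile-corner $\tau$ in the extension array $A'$ and some $r \in R(p)$, and condition~(2b) immediately gives $x_G(g - r + a) = p''(\tau + a) = p(a) = x_G(a)$ for all $a \in A$, once one verifies $\tau + a \in \dom(p'')$ (which follows from $a \neq u(p)$ together with the disjointness of tiles).

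For the $2$-coloring property, I reduce to exhibiting, for each $s \neq (0,0)$, a single position $g_1 \in \Z^2$ with $x_G(g_1) \neq x_G(g_1 + s)$. Indeed, applying the minimality just established to $A = \{g_1, g_1+s\}$ produces a finite $T$ such that for each $g$ some $t \in T$ satisfies $x_G(g+t+g_1) = x_G(g_1) \neq x_G(g_1+s) = x_G(g+t+g_1+s)$, whence $T'_s := T + g_1$ is the desired finite contrast set at direction $s$. So it suffices to show density of $D_s = \{q : \exists g_1 \text{ with } g_1, g_1{+}s \in \dom(q) \text{ and } q(g_1) \neq q(g_1+s)\}$. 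Given $q'$, I extend to $q \leq q'$ via a tiling large enough that some tile-corner $t_0 \neq t^*$ (and, if relevant, $t_0 + s \neq t^*$) puts both $u(q') + t_0$ and $u(q') + t_0 + s$ in $\dom(q)$. Then $g_1 := u(q') + t_0$ is a hole-fill whose value in $q$ is a free choice, which I set opposite to $q(g_1 + s)$; the latter is either another hole-fill or is fixed by $q'$'s pattern via~(2b), but either way the assignment can be made consistently.

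The main obstacle is the $2$-coloring step, since the tiling extension relation makes $q$'s pattern recur very rigidly in its extensions. The slack that permits aperiodicity is precisely the hole-fill freedom at each extension: only one tile-hole is left undefined, so every other tile-hole position can be freely assigned. Routing the $2$-coloring property through minimality reduces the requirement to producing one contrast position per direction $s$, which the hole-fill freedom handles directly.
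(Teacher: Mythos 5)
Your proof is correct and uses essentially the same ideas as the paper: dense sets obtained by large tiling extensions, with the single hole-fill per extension providing the freedom needed to create a contrast, and the rigid $\bmod\,(w(p),h(p))$ tiling structure providing the finite recurrence windows. The one organizational difference is that you prove minimality first and then derive the $2$-coloring property by applying minimality to $A=\{g_1,g_1+s\}$ for a single forced contrast point $g_1$, whereas the paper proves the $2$-coloring recurrence directly and then runs an analogous recurrence argument a second time for minimality; your factoring is a clean economy but the underlying density and tiling-recurrence arguments are the same.
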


\begin{proof}
Let $q \in \bbP_{gp}$ and let $s \in \Z^2\setminus\{ (0,0)\}$. Let $p
\leq q$ be defined as follows.  First extend $R(q)$ to $R(p)$ by
picking sufficiently large intervals $[i_0, i_1], [j_0, j_1]$ as in
Definition~\ref{def:gpf}(\ref{gpf2a}), and setting
$$A=\left\{ (iw(q), jh(q))\,\colon\, i_0\leq i \leq i_1, j_0 \leq j
\leq j_1\right\}$$ and $R(p)=\bigcup_{t\in A} t\cdot R(q)$, so that
$u(q)+s \in R(p)$. For all $c \in R(q)\setminus\{ u(q)\}$ and $t\in
A$, we have $p(c+t)=q(c)$ by Definition~\ref{def:gpf}
(\ref{gpf2b}). We then define $p$ on the point $u(q)$, and possibly
$u(q)+s$ if $p$ is not already defined there, so that $p(u(q)) \neq
p(u(q)+s)$. We finally extend the domain so that $p$ is a condition in
$\bbP_{gp}$; in other words, we pick a point of the form $u(q)+t$ for
some $t\in A$ to be the new point $u(p)$ at which $p$ is undefined,
and define $p$ arbitrarily elsewhere. What we have just shown is the
density of the set
$$ D_s=\{ p\in\bbP_{gp}\,:\, \exists g\in\dom(p)\ (g+s\in
\dom(p)\mbox{ and } p(g)\neq p(g+s))\}. $$

Let $p\in D_s\cap G$ be arbitrary. Fix $g_0\in\dom(p)$ so that
$g_0+s\in\dom(p)$ and $p(g_0)\neq p(g_0+s)$. Let
$T=[-w(p),w(p)]\times[-h(p),h(p)]$. We claim that $p$ forces the
$2$-coloring property for the shift $s$ with witnessing set $T$. That
is,
\[
p \forces \forall g \in \Z^2\ \exists t \in T\ (\xd_G(g+t)\neq
\xd_G(g+s+t))
\]
To see this, note that from the definition of the extension relation
we have that $x_G$ is a tiling by $p$ except at points of the form
$u(p)+(iw(p),jh(p))$ for $(i,j)\in\Z^2$.  That is, if $c \in \dom(p)$
then $x_G(c+(iw(p),jh(p)))=p(c)$ for all $(i,j) \in \Z^2$. Any $g \in
\Z^2$ is in $R(p)+(iw(p),jh(p))$ for some $(i,j)\in\Z^2$, and so there
is a $\tau \in T$ such that $g+\tau$ is of the form $g_0+(iw,jh)$. So
$x_G(g+\tau)=p(g_0)\neq p(g_0+s)=x_G(g+s+\tau)$. This shows that $x_G$
is a $2$-coloring.

To see that $x_G$ is minimal, fix a finite $F \subseteq \Z^2$. Note
that the set
$$ S_F=\{p\in \bbP_{gp}\,:\, F \subseteq \dom(p)\} $$ is dense. Let
$p\in S_F\cap G$, and again let $T=[-w(p),w(p)]\times[-h(p),h(p)]$.
Let $E$ be the set of all points of the form $u(p)+(iw(p),jh(p))$ for
$(i,j)\in\Z^2$. Then $F\cap E=\varnothing$. Since $x_G$ is a tiling by
$p$ off $E$, it follows that for any $g \in \Z^2$ there is a $\tau \in
T$ such that $p(\sigma)=x_G(g+\tau+\sigma)$ for all $\sigma \in
F$. Thus $x_G$ is minimal.
\end{proof}

Since $x_G$ is a minimal $2$-coloring, we certainly have that $x_G$ is
not periodic, that is, $x_G \in F(2^{\Z^2})$. However, $x_G$ satisfies
some weak form of periodicity as the next lemma shows.

\begin{lemma}
Let $x_G$ be a generic real for $\bbP_{gp}(n)$.
\begin{enumerate}
\item[\rm (i)] For any vertical
or horizontal line $\ell$ in $\Z^2$, $x_G \res \ell$ is periodic
with period a power of $n$.
\item[\rm (ii)] For any finite $A \subseteq \Z^2$,
there is a lattice $L = (w \Z) \times (h \Z)$, with both $w$ and $h$
powers of $n$, and there is $u \in \Z^2 \setminus (A + L)$
such that $x_G$ is constant on $k + L$ whenever $k + L \neq u + L$.
\end{enumerate}
\end{lemma}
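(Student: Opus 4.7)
The plan is to derive both parts from a single structural observation about the generic real: for any $p \in G$, the function $x_G$ is a tiling by $p$ off the \emph{bad coset} $u(p) + L(p)$, where $L(p) = w(p)\Z \times h(p)\Z$. Specifically, for $g \in \Z^2$, writing $g = c + (iw(p), jh(p))$ with $c \in R(p)$, one has $x_G(g) = p(c)$ whenever $c \neq u(p)$. This is essentially what is shown inside the proof of Lemma~\ref{lem:gpmt}. In particular, on every coset of $L(p)$ other than $u(p) + L(p)$, the function $x_G$ takes a single constant value. The strategy for each part is then to force, via a density argument, that some $p \in G$ has its bad coset positioned favorably.

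For (i), fix a horizontal line $\ell = \Z \times \{y_0\}$ and let
$$E_\ell = \{p \in \bbP_{gp}(n) : u(p)_2 \not\equiv y_0 \pmod{h(p)}\}.$$
To show $E_\ell$ is dense below $q$, tile $R(q)$ by an $n \times n$ grid of copies so that $w(p) = n w(q)$ and $h(p) = n h(q)$; the allowable choices of $u(p) = u(q) + (iw(q), jh(q))$ for $0 \leq i, j < n$ give $n$ distinct residues of $u(p)_2$ modulo $h(p)$, at least one of which avoids $y_0$ (since $n \geq 2$). For any $p \in G \cap E_\ell$, the line $\ell$ misses $u(p) + L(p)$ entirely, so by the tiling $x_G \res \ell$ is periodic with period $w(p)$, a power of $n$. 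Vertical lines are handled symmetrically.

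For (ii), given finite $A \subseteq \Z^2$, set
$$D_A = \{p \in \bbP_{gp}(n) : (u(p) + L(p)) \cap A = \varnothing\}.$$
For density below $q$, tile $R(q)$ by an $n^k \times n^k$ grid where $n^{2k} > |A|$; then $u(q) + L(q)$ decomposes into $n^{2k}$ cosets of $L(p)$, at most $|A|$ of which meet $A$, so a good $u(p)$ can be chosen among the tile offsets. Pick $p \in G \cap D_A$ and set $L = L(p)$, $u = u(p)$. Then $u \notin A + L$ by construction, and for any coset $k + L \neq u + L$ the unique representative $c$ of $k + L$ lying in $R(p)$ differs from $u(p)$, so $p(c)$ is defined and the tiling gives $x_G \equiv p(c)$ on all of $k + L$.

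The only real subtlety is rigorously extracting the tiling description of $x_G$ off the bad coset from the extension relation of $\bbP_{gp}(n)$; this requires a density argument identical in style to those in the proof of Lemma~\ref{lem:gpmt} (every $g \in \Z^2$ eventually enters the domain of some condition in $G$ at a non-bad position), and I do not expect it to present a separate obstacle.
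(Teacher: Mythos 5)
Your proof is correct and follows essentially the same route as the paper: extract from the extension relation that $x_G$ is a tiling by $p$ off the bad coset $u(p)+L(p)$, then use a density argument to position the bad coset favorably (off the given line $\ell$ for (i), off the given finite set $A$ for (ii)). The only cosmetic difference is in (ii): the paper takes the slightly stronger dense set $\{p : A \subseteq R(p)\setminus\{u(p)\}\}$ (which forces $A$ entirely inside one tile), whereas you use the weaker dense set $\{p : (u(p)+L(p))\cap A = \varnothing\}$; both yield the claim, and your counting of cosets in the density argument is sound.
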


\begin{proof}
(ii). The proof is similar to the proof that $x_G$ is minimal in
Lemma~\ref{lem:gpmt}.  Given $A$ and $q \in \bbP_{gp}$, there is a $p
\leq q$ with $A \subseteq R(p) \setminus \{u(p)\}$. There is such a $p$ which forces that
$x_G$ has horizontal and vertical periods $w(p)$ and $h(p)$ (which are
powers of $n$) off of the set $u(p) + L(p)$, where $L(p)=\{ (iw(p),jh(p))\, \colon\,
(i,j) \in \Z^2\}$. It follows by genericity that $x_G$ has the stated
grid periodicity property.

(i). Given any vertical or horizontal line $\ell$ in $\Z^2$, the set of
$p\in\bbP_{gp}$ with
$$ \ell\cap (u(p) + L(p))=\ell\cap \{u(p)+(iw(p),jh(p))\,:\, (i,j)\cap
\Z^2\}=\varnothing $$ is dense. This implies that $x_G\upharpoonright
\ell$ has a period $n^k$ for some $k$.
\end{proof}

As an application of grid periodicity forcing we now have the
following structure theorem for Borel complete sections of
$F(2^{\Z^2})$.

\begin{thm} \label{thm:gp}
Let $B \subseteq F(2^{\Z^2})$ be a Borel complete section. Then there
is an $x \in F(2^{\Z^2})$ and a lattice $L= k+\{ (iw,jh) \colon (i,j)
\in \Z^2\}$ such that $L\cdot x \subseteq B$.
\end{thm}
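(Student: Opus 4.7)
The plan is to exploit the grid periodicity forcing $\bbP_{gp}$ together with its natural shift automorphisms $\sigma_\ell \colon p \mapsto \ell \cdot p$ (translate both $R(p)$ and $u(p)$ by $\ell$). Since ``$B$ is a complete section'' is a $\bp^1_1$ statement, it is absolute to any $\bbP_{gp}$-generic extension $V[G]$, so the generic $x_G$ satisfies $g \cdot x_G \in B$ for some $g \in \Z^2$. A density argument then produces a condition $p'$ forcing $g \cdot \dot{x}_G \in \check{B}$; applying the shift automorphism $\sigma_g$ absorbs $g$, giving $p_0 = \sigma_g(p')$ with $p_0 \forces \dot{x}_G \in \check{B}$. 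I aim to show that with $L = L(p_0) = w(p_0)\Z \times h(p_0)\Z$, the condition $p_0$ can be extended to a single filter forcing $\ell \cdot \dot{x}_G \in \check{B}$ for every $\ell \in L$.

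The key observation is that each $\sigma_\ell$ is an automorphism of $\bbP_{gp}$, and the standard identity (with sign tracking $\sigma_\ell(\dot{x}_G)_G = -\ell \cdot x_G$) yields $\sigma_{-\ell}(p_0) \forces \ell \cdot \dot{x}_G \in \check{B}$ for every $\ell \in \Z^2$. Moreover, for $\ell \in L(p_0)$ the conditions $p_0$ and $\sigma_{-\ell}(p_0)$ admit a common refinement: any sufficiently large $L(p_0)$-aligned rectangular tiling $q$ by copies of $p_0$ whose base rectangle contains both $R(p_0)$ and $-\ell + R(p_0)$ will realize each of these as a tile (since $-\ell \in L(p_0)$), so the tile values $q(c+t) = p_0(c)$ simultaneously witness $q \leq p_0$ and $q \leq \sigma_{-\ell}(p_0)$. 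Fix a countable elementary submodel $M \prec V_\kappa$ containing the relevant parameters, enumerate $L(p_0) = \{\ell_0 = 0, \ell_1, \ldots\}$ and the dense subsets of $\bbP_{gp}$ in $M$ as $\{D_n\}$, and inductively build a decreasing sequence $p_0 \geq p_1 \geq p_2 \geq \cdots$ with $p_n \in D_n$ and $p_n$ a common extension of $p_{n-1}$ and $\sigma_{-\ell_n}(p_0)$. Transitivity of $\leq$ then yields $p_n \forces \ell_i \cdot \dot{x}_G \in \check{B}$ for every $i \leq n$.

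The filter $G^\ast$ generated by $\{p_n\}$ is $M$-generic, so $x^\ast = x_{G^\ast}$ lies in $V$; by Lemma~\ref{lem:gpmt} applied inside $M[G^\ast]$, together with absoluteness of the arithmetic statement of being a $2$-coloring, $x^\ast \in F(2^{\Z^2})$. For each $i$, the forcing theorem applied in $M$ together with Borel absoluteness of $B$ gives $\ell_i \cdot x^\ast \in B$ in $V$, whence $L \cdot x^\ast \subseteq B$ as required. The main obstacle is the compatibility step verifying that $p_0$ and $\sigma_{-\ell}(p_0)$ admit a common extension for every $\ell \in L(p_0)$; this relies on the fact that the extension relation of $\bbP_{gp}$ is organized by $L(p_0)$-aligned tilings, which is precisely why grid periodicity forcing is tailored to produce this lattice structure.
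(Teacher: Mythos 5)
Your proof is correct and takes essentially the same approach as the paper: both rest on the observation that the tiling-based extension relation of $\bbP_{gp}$ makes any $q\le p_0$ whose domain covers $-\ell + R(p_0)$ with $\ell\in L(p_0)$ automatically satisfy $q\le \sigma_{-\ell}(p_0)$ as well. The paper's version is marginally slicker in that it keeps the original generic $G$ (and the offset $k$, rather than absorbing it by an automorphism) and simply notes by density that $G$ already contains every translated condition $\pi_{i,j}(q)$, whereas you rebuild an explicit $M$-generic tower below $p_0$; these are two phrasings of the same argument.
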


\begin{proof}
Let $x_G$ be a generic real for $\bbP_{gp}$. We claim that $B \cap
[x_G]$ contains a lattice as required. Since $B \cap [x_G]\neq
\varnothing$, we may fix $k \in \Z^2$ and $q \in G$ such that $q
\forces (k \cdot \xd_G \in B)$. For any $(i,j)\in\Z^2$, let
$\pi_{i,j}$ be the translation defined by
$\pi_{i,j}(g)=g+(iw(q),jh(q))$. Then $\pi_{i,j}$ induces an
automorphism of $\bbP_{gp}$ and
$$\pi_{i,j}(q) \forces (\pi_{i,j}(k) \cdot \xd_G \in B).$$ Note that
$\pi_{i,j}(k)\cdot \xd_G= (k+(iw(p),jh(p)))\cdot \xd_G$.  It suffices
therefore to show that $G$ contains the condition $\pi_{i,j}(q)$. By
density, there is a $p\leq q$ in $G$ with $R(p) \supseteq R(q) \cup
R(\pi_{i,j}(q))$. It is clear, however, from the definition of the
extension relation that $p\leq \pi_{i,j}(q)$. Thus, $\pi_{i,j}(q)\in
G$ as well.
\end{proof}

We mention that while A. Marks was visiting the authors, he used
forcing methods to generalize the above theorem to all countable
residually finite groups $\Gamma$ \cite{Ma}.

The proof of Theorem~\ref{thm:gp} also gives the following variation
of Theorem~\ref{thm:gp}.

\begin{thm} \label{thm:gp2}
Let $f \colon F(2^{\Z^2}) \to \N$ be Borel. Then there is an $x \in
F(2^{\Z^2})$ and a lattice $L \subseteq \Z^2$ such that the map $s
\mapsto f(s \cdot x)$ is constant on $L$.
\end{thm}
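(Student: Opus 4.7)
The plan is to run the forcing argument from Theorem~\ref{thm:gp}, but this time using the fibers of $f$ rather than a single complete section. Let $x_G$ be a $\bbP_{gp}$-generic real. By Lemma~\ref{lem:gpmt} it lies in $F(2^{\Z^2})$ in $V[G]$, so since $f$ is Borel and $\N$ is absolute, $f(x_G)$ equals some unique $n_0 \in \N \cap V$. By the forcing theorem there is $q \in G$ with $q \forces f(\dot{x}_G) = n_0$. This is the direct analog of fixing $k \in \Z^2$ and $q$ with $q \forces k \cdot \dot{x}_G \in B$ in the proof of Theorem~\ref{thm:gp}, but no extra shift $k$ is needed since $x_G$ itself realizes the target value.

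I propose the lattice $L = \{(iw(q), jh(q)) : (i,j) \in \Z^2\}$ and claim $f(s \cdot x_G) = n_0$ for every $s \in L$. To verify this, fix $(i,j) \in \Z^2$ and consider the translation $\pi_{i,j}$ by $(iw(q), jh(q))$. This translation sends conditions to conditions (side lengths remain powers of $n$) and respects the extension relation of Definition~\ref{def:gpf}, so it induces an automorphism of $\bbP_{gp}$. The density argument from Theorem~\ref{thm:gp} then applies verbatim: by taking $p \leq q$ in $G$ with $R(p) \supseteq R(q) \cup R(\pi_{i,j}(q))$, the tiling form of the extension relation gives $p \leq \pi_{i,j}(q)$, so $\pi_{i,j}(q) \in G$ as well. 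Because automorphisms preserve the forcing relation, $\pi_{i,j}(q) \forces f(\pi_{i,j}(\dot{x}_G)) = n_0$, and interpreting $\pi_{i,j}(\dot{x}_G)$ in $V[G]$ yields the shift $s \cdot x_G$ for the corresponding $s \in L$, giving $f(s \cdot x_G) = n_0$. Letting $(i,j)$ range over $\Z^2$ covers every element of $L$, and Shoenfield absoluteness then pushes the resulting $\bs^1_1$ existence statement (the lattice is encoded by a pair of generators and $n_0$ by a natural number) back to $V$.

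The only subtlety requiring care is verifying that $\pi_{i,j}$ transports $\dot{x}_G$ to a name for the correct shift of $x_G$: one must chase through the definition of the generic real as being read off from the conditions in $G$ and track the sign of the induced shift. However, this is exactly the bookkeeping already implicit in the proof of Theorem~\ref{thm:gp} (where $\pi_{i,j}(k) \cdot \dot{x}_G \in B$ is extracted from $k \cdot \dot{x}_G \in B$ via the automorphism), so no new obstacle arises beyond what was handled there.
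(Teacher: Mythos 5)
Your proof is correct and is essentially the paper's intended argument: the paper does not spell out a separate proof for Theorem~\ref{thm:gp2}, saying only that ``the proof of Theorem~\ref{thm:gp} also gives'' it, and your write-up carries out exactly that adaptation (grid-periodicity forcing, translation automorphisms $\pi_{i,j}$, density of conditions with large $R(p)$, and a $\bs^1_1$ absoluteness step). Your observation that no auxiliary shift $k$ is needed here--because $f$ is total, $x_G$ itself realizes the target value $n_0$, so the lattice may be taken through the origin--is a small, accurate simplification relative to Theorem~\ref{thm:gp}.
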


Considering the characteristic function of the Borel set $B$ gives:

\begin{cor}
If $B \subseteq F(2^{\Z^2})$ is Borel, then there is an $x \in
F(2^{\Z^2})$ such that either $\{ s \,\colon\, s\cdot x \in B\}$ or
$\{ s \,\colon\, s\cdot x \in 2^{\Z^2}\setminus B\}$ contains a
lattice $L$ in $\Z^2$.
\end{cor}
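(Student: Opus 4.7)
The plan is to deduce this corollary as an immediate consequence of Theorem~\ref{thm:gp2}, by feeding it the indicator function of $B$. Since $B$ is Borel, the function $f \colon F(2^{\Z^2}) \to \N$ defined by $f(y) = 1$ if $y \in B$ and $f(y) = 0$ otherwise is Borel (its range lies in $\{0,1\} \subseteq \N$). So Theorem~\ref{thm:gp2} applies and yields some $x \in F(2^{\Z^2})$ and a lattice $L \subseteq \Z^2$ on which $s \mapsto f(s \cdot x)$ is constant.

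Next, I would split into two cases according to the constant value. If $f(s \cdot x) = 1$ for every $s \in L$, then $s \cdot x \in B$ for every $s \in L$, which says precisely that $L \subseteq \{s \colon s \cdot x \in B\}$. If instead $f(s \cdot x) = 0$ for every $s \in L$, then $s \cdot x \notin B$ for every $s \in L$, and since $x \in F(2^{\Z^2})$ forces $s \cdot x \in F(2^{\Z^2}) \subseteq 2^{\Z^2}$, this gives $L \subseteq \{s \colon s \cdot x \in 2^{\Z^2} \setminus B\}$. Either way, one of the two named sets contains a lattice, as required.

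There is essentially no obstacle, as all of the content of this statement is packaged into Theorem~\ref{thm:gp2} (and ultimately into the grid periodicity forcing of Section~\ref{sec:gp}). The only small thing to verify is that the two-valued function $f$ really does fit the hypotheses of Theorem~\ref{thm:gp2}, which is immediate from Borelness of $B$. Thus the corollary is a clean black-box application of the preceding theorem.
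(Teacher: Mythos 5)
Your argument is precisely the one the paper intends: the corollary is stated immediately after Theorem~\ref{thm:gp2} with the remark that it follows by ``considering the characteristic function of the Borel set $B$,'' which is exactly your indicator-function application. Correct, and essentially identical to the paper's approach.
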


\end{document}